\newcommand{\noun}[1]{\textsc{#1}}
\providecommand{\tabularnewline}{\\}
\numberwithin{equation}{section}
\numberwithin{figure}{section}
\theoremstyle{plain}
\newtheorem{thm}{\protect\theoremname}
  \theoremstyle{definition}
  \newtheorem{defn}[thm]{\protect\definitionname}
  \theoremstyle{plain}
  \newtheorem{prop}[thm]{\protect\propositionname}
  \theoremstyle{plain}
  \newtheorem{lem}[thm]{\protect\lemmaname}
\newcommand{\psfragBody}{
\psfrag{D}{\hspace{-0.15mm}\raisebox{0.07mm}{\small{D}}}
\psfrag{N}{\hspace{-0.15mm}\raisebox{0.07mm}{\small{N}}}
}
\newcommand{\psfragBodyNew}{
\psfrag{D}{\hspace{-0.35mm}\raisebox{0.07mm}{\small{D}}}
\psfrag{N}{\hspace{-0.35mm}\raisebox{0.07mm}{\small{N}}}
}
\newcommand{\psfragBodySmall}{
\psfrag{D}{\hspace{-0.15mm}\raisebox{0.1mm}{\tiny{D}}}
\psfrag{N}{\hspace{-0.15mm}\raisebox{0.1mm}{\tiny{N}}}
}
\newcommand{\psfragAppendix}{
\psfrag{D}{\hspace{-0.15mm}\raisebox{0.1mm}{\tiny{D}}}
\psfrag{N}{\hspace{-0.15mm}\raisebox{0.1mm}{\tiny{N}}}
}
\newlength{\myArraycolsep}
\def\footnote{\xdef\@thefnmark{}\@footnotetext}
\newtheorem*{RegularityTheorem}{Regularity Theorem}
\newtheorem*{ReflectionPrinciple}{Reflection Principle}
\newtheorem*{UniqueContinuationTheorem}{Unique Continuation Theorem}
\newtheorem*{TransplantationTheorem}{Transplantation Theorem}
\newtheorem*{SubstitutionTheorem}{Substitution Theorem}
\newtheorem*{TraceTheorem}{Trace Theorem}
\theoremstyle{definition}
\newtheorem*{Assumption}{Assumption A}
  \providecommand{\definitionname}{Definition}
  \providecommand{\lemmaname}{Lemma}
  \providecommand{\propositionname}{Proposition}
\providecommand{\theoremname}{Theorem}
\begin{document}

\title[On Inaudible Properties of Broken Drums]{On inaudible properties of broken drums -- Isospectrality with mixed
Dirichlet-Neumann boundary conditions}

\author{Peter Herbrich}
\begin{abstract}
We study isospectrality for manifolds with mixed Dirichlet-Neumann
boundary conditions and express the well-known transplantation\emph{
}method in graph- and representation-theoretic terms. This leads to
a characterization of transplantability in terms of monomial relations
in finite groups and allows for the generating of new transplantable
pairs from given ones as well as a computer-aided search for isospectral
pairs. In particular, we show that the Dirichlet spectrum of a manifold
does not determine whether it is connected and that an orbifold can
be Dirichlet isospectral to a manifold.
\end{abstract}
\maketitle
\begin{center}

\par\end{center}

\global\long\def\Colour{\boldsymbol{c}}
 \global\long\def\NumberOfColours{C}
 \global\long\def\NumberOfVertices{V}
 \global\long\def\Trace{\mathrm{Tr}}
 \global\long\def\Graph{\Gamma}
 \global\long\def\AdjacencyMatrix{A}
 \global\long\def\OnSecond#1{\widehat{#1}}
 \global\long\def\Dimension{d}
 \global\long\def\HomFKtoG{\Phi}
 \global\long\def\BasisVector{\boldsymbol{e}}
 \global\long\def\IsoGandGhat{\mathcal{I}}
 \global\long\def\word{w}

\global\long\def\InducedRep#1#2#3{\mathrm{Ind}_{#1}^{#2}(#3)}
 \global\long\def\Square{\mathbb{S}}
 \global\long\def\ProductOfAdjMat{p}
 \global\long\def\ElementInG{g}
 \global\long\def\GeneratorsOfG{\gamma}
 \global\long\def\CayleyGraph#1#2{\Graph(#1,#2)}
 \global\long\def\RepresentationOfG{\Phi}
 \global\long\def\Cross{\otimes}
 \global\long\def\Crossing#1#2{#1\Cross#2}
 \global\long\def\GraphOfSubstituent{\Gamma_{S}}
 \global\long\def\NumberOfVerticesOfSubstituent{V_{S}}
 \global\long\def\NumberOfEdgeColoursOfSubstituent{\NumberOfColours_{S}}
 \global\long\def\GroupOfSubstituent{G_{S}}

\global\long\def\ColourOfSubstituent{\boldsymbol{\chi}}
 \global\long\def\AdjacencyMatrixOfSubstituent#1{\AdjacencyMatrix_{S}^{#1}}

\global\long\def\LoopIndices{\boldsymbol{I}}
 \global\long\def\Substitution{\triangleright}
 \global\long\def\SubstitutedGraph#1#2{#2\Substitution#1}

\section{Introduction\label{sec:Introduction}}

In the style of Kac's famous question ``Can one hear the shape of
a drum?''~\cite{Kac1966}, we consider\emph{ }broken drums with
partially attached drumheads. Let $M$ be a compact flat manifold
with piecewise smooth boundary $\partial M$, that features disjoint
smooth open subsets $\partial_{D}M$ and $\partial_{N}M$, representing
the attached and unattached parts of the drumhead, such that
\begin{equation}
\partial M=\overline{\partial_{D}M\cup\partial_{N}M}.\label{eq:BoundaryDecomposition}
\end{equation}
The audible frequencies of the broken drum $M$ are determined by
the Zaremba eigenvalue problem 
\begin{equation}
\begin{array}{cccl}
\Delta\varphi & = & \lambda\varphi & \mbox{on }M^{\circ}\\
\varphi & = & 0 & \mbox{on }\partial_{D}M\quad\text{(Dirichlet)}\\
\frac{\partial\varphi}{\partial n} & = & 0 & \mbox{on }\partial_{N}M\quad\text{(Neumann)},
\end{array}\label{eq:MixedEigenvalueProblem}
\end{equation}
where $M^{\circ}$ denotes the interior of $M$ and $\frac{\partial\varphi}{\partial n}$
denotes the normal derivative of~$\varphi$.  Recall that integration
with respect to the Riemannian measure gives rise to an inner product
on $C^{\infty}(M)=C^{\infty}(M,\mathbb{R})$ with Hilbert space completion
$L^{2}(M)=L^{2}(M,\mathbb{R})$. The following assumption will allow
us to compare solutions of~(\ref{eq:MixedEigenvalueProblem}) in
a combinatorial manner.

\begin{Assumption}The eigenvalues $\lambda$ for which there exists
$\varphi\in C(M)\cap C^{\infty}(M^{\circ}\cup\partial_{D}M\cup\partial_{N}M)$
satisfying~(\ref{eq:MixedEigenvalueProblem}) are given as an unbounded
sequence
\begin{equation}
0\leq\lambda_{0}\leq\lambda_{1}\leq\lambda_{2}\leq\ldots,\label{eq:Spectrum}
\end{equation}
where each eigenspace is finite dimensional, and $L^{2}(M)$ is the
orthogonal direct sum of all eigenspaces.\end{Assumption}

Once mixed boundary conditions are imposed on $M$, we refer to (\ref{eq:Spectrum})
as the spectrum\emph{ }of\emph{ }$M$. We call two manifolds isospectral
if their spectra coincide and reserve the terms Dirichlet\emph{ }isospectral\emph{
}and Neumann\emph{ }isospectral\emph{ }for pure\emph{ }boundary\emph{
}conditions, that is, if $\partial_{N}M=\varnothing$ or $\partial_{D}M=\varnothing$,
respectively. An inspection of the heat kernel shows that isospectral
manifolds have the same dimension, volume and difference of Neumann
boundary volume and Dirichlet boundary volume~\cite{BransonGilkey1990}.

\begin{figure}
\noindent \begin{centering}
\hfill{}\subfloat[First known pair\label{fig:First-planar-pair}]{\centering{}%
\begin{minipage}[b]{53mm}%
\noindent \begin{center}
\psfrag{1}{} \psfrag{2}{} \psfrag{3}{} \psfrag{4}{} \psfrag{5}{} \psfrag{6}{} \psfrag{7}{}\includegraphics[scale=0.31]{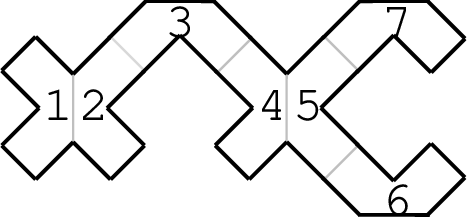}\hspace{3mm}\includegraphics[scale=0.31]{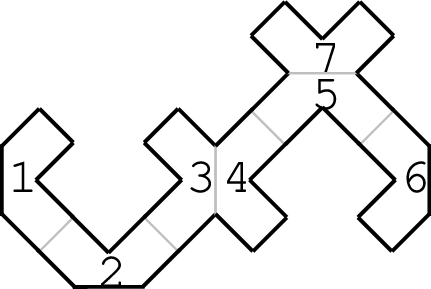}
\par\end{center}%
\end{minipage}}\hfill{}\subfloat[Gordon-Webb-Wolpert drums\label{fig:IsospectralDrumsWithTriangles}]{\noindent \begin{centering}
\begin{minipage}[b]{62mm}%
\noindent \begin{center}
\psfrag{1}{} \psfrag{2}{} \psfrag{3}{} \psfrag{4}{} \psfrag{5}{} \psfrag{6}{} \psfrag{7}{}\includegraphics[scale=0.31]{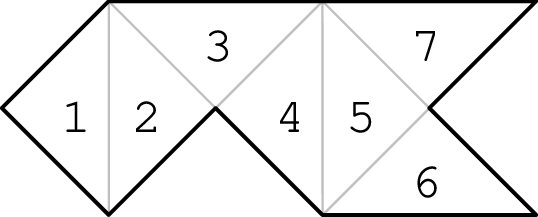}\hspace{3mm}\includegraphics[scale=0.31]{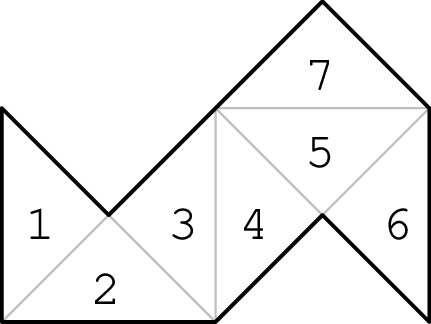}
\par\end{center}%
\end{minipage}
\par\end{centering}

}\hfill{}
\par\end{centering}

\caption{Dirichlet isospectral planar domains~\cite{GordonWebbWolpert1992}.}
\end{figure}

Motivated by number-theoretic ideas, Sunada~\cite{Sunada1985} developed
a celebrated method to construct isospectral manifolds, which has
since been widely extended~\cite{DeTurckGordon1989,Pesce1996,Sutton2002,BandParzanchevskiBen-Shach2009,ParzanchevskiBand2010}.
Gordon~et~al.~\cite{GordonWebbWolpert1992} used B\'erard's extension
to the orbifold setting \cite{B'erard1992} to answer Kac's question
by constructing the pair in Figure~\ref{fig:First-planar-pair}.

Studying the combinatorial aspects of the Sunada method, Buser \cite{Buser1986}
developed the transplantation method, which produces isospectral manifolds
without explicit reference to the underlying group structure. The
method can be applied to pairs of manifolds that are composed of identical
building blocks. Roughly speaking, each solution to~(\ref{eq:MixedEigenvalueProblem})
on one of the manifolds is cut into its restrictions to blocks, and
these restrictions are superposed linearly on the blocks of the other
manifold in such a way that the resulting function also solves~(\ref{eq:MixedEigenvalueProblem}).
All known pairs of Dirichlet isospectral planar domains are obtained
in this way~\cite{BuserConwayDoyleSemmler1994}. Since transplantability
only depends on the combinatorial decomposition of manifolds into
blocks, the so-called Gordon-Webb-Wolpert drums in Figure~\ref{fig:IsospectralDrumsWithTriangles}
are equally isospectral, which has been confirmed experimentally using
microwave cavities~\cite{SridharKudrolli1994}. 

For the case of pure boundary conditions, Okada and Shudo~\cite{OkadaShudo2001}
derived sufficient conditions for transplantability in terms of associated
edge-colored graphs in order to perform a computer-aided search for
isospectral pairs. In Section~\ref{sec:TheTransplantationMethod},
we employ regularity and continuation theorems for elliptic operators
to characterize the transplantation method in terms of edge-colored
graphs with signed loops, that encode mixed boundary conditions.

The work of Levitin~et~al. on transplantable planar domains with
mixed boundary conditions~\cite{LevitinParnovskiPolterovich2006,JakobsonLevitinNadirashviliPolterovich2006}
motivated the generalization of the Sunada method by Band~et~al.
\cite{BandParzanchevskiBen-Shach2009,ParzanchevskiBand2010}. In Section~\ref{sec:IndRepAndTrans},
we show that every transplantable pair arises from this generalization.
In particular, we show that each pair of transplantable graphs gives
rise to a finite group $G$ and two tuples $(R_{i})_{i}$ and $(\widehat{R}_{j})_{j}$
of real one-dimensional representations of subgroups $(H_{i})_{i}$
and $(\widehat{H}_{j})_{j}$ of $G$ such that the sums of their inductions
to $G$ are equivalent,
\[
\bigoplus_{i}\textnormal{Ind}_{H_{i}}^{G}(R_{i})\simeq\bigoplus_{j}\textnormal{Ind}_{\widehat{H}_{j}}^{G}(\widehat{R}_{j}).
\]
Moreover, the graphs can be recovered from this representation-theoretic
data. This viewpoint ties the transplantation method to the study
of so-called monomial relations in finite groups, which have been
investigated intensively because of their applications to $L$-functions~\cite{Langlands1970,Deligne1973}.
In the case of pure Neumann boundary conditions, the representations
$(R_{i})_{i}$ and $(\widehat{R}_{j})_{j}$ are trivial, leading to
so-called Brauer relations, which have been classified for all finite
groups~\cite{BartelDokchitser2011}. In particular, Sunada's original
method uses Gassmann triples which correspond to the shortest non-trivial
Brauer relations.

In Section~\ref{sec:GeneratingTools}, we introduce methods which
allow to generate new transplantable pairs from given ones. In Section~\ref{sec:Examples_of_Trans_Pairs},
we comment on their implications such as the existence of infinitely
many transplantable pairs and the existence of arbitrarily long transplantable
tuples of certain types. Thereafter, we classify all connected pairs
with $2$ edge colors, which yields another proof of the extension
of~\cite[Theorem 4.2]{LevitinParnovskiPolterovich2006} given in~\cite{BandParzanchevskiBen-Shach2009}.
Moreover, we explain the algorithm that was used to search for new
transplantable pairs systematically. Amongst others, we found $10$
pairs of Gordon-Webb-Wolpert drums with mixed boundary conditions,
one of whose isospectrality had been conjectured~\cite{DriscollGottlieb2003}.

In Section~\ref{sec:InaudibleProps}, we discuss inaudible properties
of broken drums and present pairs which are the first non-trivial
known ones of their kind:
\begin{itemize}
\item Section~\ref{sub:SelfDualAndAlmostSelfDualPairs}: A connected domain
with one Dirichlet and one Neumann boundary component whose spectrum
is invariant under swapping Dirichlet with Neumann boundary conditions.
\item Section~\ref{sub:Orientability}: A pair of isospectral connected
flat manifolds with mixed boundary conditions one of which is orientable
while the other is not. The manifolds also have different numbers
of Dirichlet boundary components.
\item Section~\ref{sub:NbrOfComp}: A simply-connected flat manifold that
is Dirichlet isospectral to a disconnected one. In particular, this
is the first known pair that is Dirichlet but not Neumann isospectral.
\item Section~\ref{sub:BrokennessAndIsotropyOrder}: A pair of Dirichlet
isospectral connected flat manifolds with the same heat content.
\item Section~\ref{sub:BrokennessAndIsotropyOrder}: A simply-connected
flat manifold with mixed boundary conditions that is isospectral to
a connected one with pure Dirichlet boundary conditions. In particular,
the number of Neumann boundary components is not spectrally determined,
and orbifolds can be Dirichlet isospectral to manifolds.
\end{itemize}

\section{The transplantation method\label{sec:TheTransplantationMethod}}

The transplantation method applies to manifolds that are obtained
by successive reflections and gluings of building blocks defined as
follows.
\begin{defn}
\label{def:BuildingBlock}A building\emph{ }block\emph{ }is a compact
Riemannian manifold $B$ with piecewise smooth boundary $\partial B$,
which contains disjoint open smooth subsets $\partial_{R}^{1}B,\partial_{R}^{2}B,\ldots,\partial_{R}^{\NumberOfColours}B$,
called reflecting\emph{ }faces, each of which has a neighborhood in
$B$ that is isometric to an open subset of closed Euclidean upper
half space.
\end{defn}
Note that two copies $B_{1}$ and $B_{2}$ of a building block $B$
can be glued together along their reflecting faces $\partial_{R}^{\Colour}B_{1}$
and $\partial_{R}^{\Colour}B_{2}$ by identifying their closures $\overline{\partial_{R}^{\Colour}B_{1}}\subset\partial B_{1}$
and $\overline{\partial_{R}^{\Colour}B_{2}}\subset\partial B_{2}$.
We only consider cases in which the resulting topological space $B_{1}\cup B_{2}/\sim$
is a Riemannian manifold. By assumption, each point of $\partial_{R}^{\Colour}B$
lies in the image of a local isometry $\psi$ with Euclidean domain
$(-l,l)^{\Dimension-1}\times(-l,0]$ for some $l>0$. If $\psi_{1}$
and $\psi_{2}$ denote the corresponding local isometries of $B_{1}$
and $B_{2}$, then
\begin{equation}
\psi_{\cup}(x_{1},\ldots,x_{\Dimension-1},x_{\Dimension})=\begin{cases}
\psi_{1}(x_{1},\ldots,x_{\Dimension-1},x_{\Dimension}) & \mbox{for }x_{\Dimension}\leq0\\
\psi_{2}(x_{1},\ldots,x_{\Dimension-1},-x_{\Dimension}) & \mbox{for }x_{\Dimension}>0
\end{cases}\label{eq:Chart_near_inner_segment}
\end{equation}
shall be a local isometry of $B_{1}\cup B_{2}/\sim$ with domain $(-l,l)^{\Dimension}$.
For the sake of simplicity, we use Euclidean space locally; however,
the theory can be easily extended, for instance, to other constant
curvature spaces or singular manifolds such as quantum graphs.

\begin{figure}
\noindent \begin{centering}
\hfill{}\subfloat[Building block\label{fig:BuildingBlockWithMixedBC}]{\noindent \centering{}%
\begin{minipage}[b]{50mm}%
\noindent \begin{center}
\psfrag{a}{\hspace{-4mm}\footnotesize{$\partial_{R}^{2}B$}} \psfrag{b}{\hspace{-4mm}\footnotesize{$\partial_{R}^{3}B$}} \psfrag{c}{\footnotesize{$\partial_{R}^{1}B$}} \psfrag{d}{\hspace{-4mm}\footnotesize{$\partial_{D}B$}} \psfrag{e}{\footnotesize{$\partial_{D}B$}} \psfrag{n}{\footnotesize{$\partial_{N}B$}}\raisebox{1.28mm}{\includegraphics[scale=0.48]{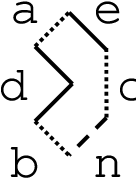}}
\par\end{center}%
\end{minipage}}\hfill{}\subfloat[Figure~\ref{fig:First-planar-pair} with new boundary conditions\label{fig:DomainsWithMixedBC}]{\noindent \begin{centering}
\begin{minipage}[b]{90mm}%
\noindent \begin{center}
\includegraphics[scale=0.48]{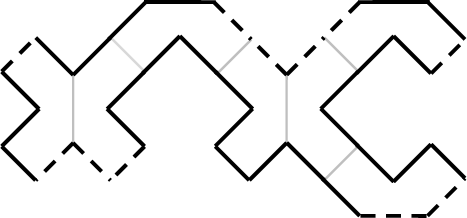}\hspace{3mm}\includegraphics[scale=0.48]{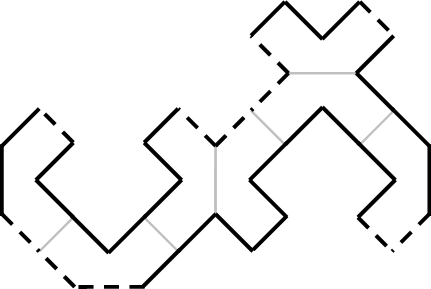}
\par\end{center}%
\end{minipage}
\par\end{centering}

}\hfill{}
\par\end{centering}

\caption{Tiled manifolds with mixed boundary conditions that are represented
by solid (Dirichlet) and dashed (Neumann) lines. Dotted lines are
reflecting faces.\label{fig:TiledDomainsWithMixedBC}}
\end{figure}

\begin{defn}
\label{def:Tiled_Domain}A tiled\emph{ }manifold is a compact Riemannian
manifold $M=(B_{1}\cup B_{2}\cup\ldots\cup B_{\NumberOfVertices})/\sim$
with piecewise smooth boundary $\partial M$, such that\end{defn}
\begin{enumerate}
\item $B_{1},B_{2},\ldots,B_{\NumberOfVertices}$ are copies of some building
block $B$ which has reflecting\emph{ }faces $(\partial_{R}^{\Colour}B)_{\Colour=1}^{\NumberOfColours}$
and disjoint open smooth subsets $\partial_{D}B$ and $\partial_{N}B$
of $\partial B\backslash\bigcup_{\Colour=1}^{\NumberOfColours}\overline{\partial_{R}^{\Colour}B}$
such that
\begin{equation}
\partial B=\overline{\left(\bigcup_{\Colour=1}^{\NumberOfColours}\partial_{R}^{\Colour}B\right)\cup\partial_{D}B\cup\partial_{N}B}.\label{eq:BoundaryDecompositionOfBuildingBlock}
\end{equation}

\begin{enumerate}
\item $B_{1},B_{2},\ldots,B_{\NumberOfVertices}$ are glued along the disjoint
pairs of reflecting faces 
\[
((\partial_{R}^{\Colour_{k}}B_{i_{k}},\partial_{R}^{\Colour_{k}}B_{j_{k}}))_{k},
\]

\item we have 
\[
M^{\circ}=\biggl(B_{1}^{\circ}\cup B_{2}^{\circ}\cup\ldots\cup B_{\NumberOfVertices}^{\circ}\cup\bigcup_{k}\partial_{R}^{\Colour_{k}}B_{i_{k}}\biggr)/\sim,
\]

\item $(\partial_{R}^{\Colour_{k}}B_{i_{k}}/\sim){}_{k}$ are called inner
faces in contrast to outer faces which remain unglued and belong to
$\partial M$,
\item Assumption A is satisfied for all boundary conditions obtained as
follows. On each outer face, we impose either Dirichlet or Neumann
boundary conditions to obtain the Zaremba problem
\[
\begin{array}{cccl}
\Delta\varphi & = & \lambda\varphi & \mbox{on }M^{\circ}\\
\varphi & = & 0 & \mbox{on }\partial_{D}M\\
\frac{\partial\varphi}{\partial n} & = & 0 & \mbox{on }\partial_{N}M
\end{array}
\]
with desired solutions $\varphi\in C(M)\cap C^{\infty}(M^{\circ}\cup\partial_{D}M\cup\partial_{N}M)$,
where $\partial_{D}M$ denotes the union of $\partial_{D}B_{1},\partial_{D}B_{2},\ldots,\partial_{D}B_{\NumberOfVertices}$
and of those outer faces that carry Dirichlet boundary conditions,
$\partial_{N}M$ is defined analogously.
\end{enumerate}
\end{enumerate}
Note that Definition~\ref{def:Tiled_Domain} avoids interior singularities.
Figure~\ref{fig:TiledDomainsWithMixedBC} exemplifies the construction.
\begin{defn}
Let $M$ and $\OnSecond M$ be tiled manifolds each of which is composed
of $\NumberOfVertices$ copies of the building block $B$. Then, each
invertible real $\NumberOfVertices\times\NumberOfVertices$ matrix
$T$ gives rise to a linear isomorphism $T\colon L^{2}(M)\longrightarrow L^{2}(\OnSecond M)$,
called transplantation, such that the transform $\OnSecond{\varphi}=T(\varphi)$
can be written as
\[
\OnSecond{\varphi}_{i}=\sum_{j=1}^{\NumberOfVertices}T_{ij}\varphi_{j}\qquad\textrm{almost everywhere},
\]
where $(\varphi_{j})_{j}$ and $(\OnSecond{\varphi}_{i})_{i}$ denote
the restrictions of $\varphi$ and $\OnSecond{\varphi}$ to the building
blocks of $M$ and $\OnSecond M$, respectively.\label{def:Transplantation_between_tiled_domains}
\end{defn}
The inverse of a transplantation matrix $T$ gives rise to the inverse\emph{
}transplantation\emph{ }$T^{-1}$. If $\varphi$ solves a Zaremba
problem on $M$ with eigenvalue $\lambda$, then any transplant $\OnSecond{\varphi}=T(\varphi)$
on $\OnSecond M$ will solve the Helmholtz equation $\Delta\OnSecond{\varphi}=\lambda\OnSecond{\varphi}$
almost everywhere, but will, in general, neither be smooth nor satisfy
the desired boundary conditions on $\OnSecond M$.
\begin{defn}
Two tiled manifolds with predefined boundary conditions are called
transplantable if there exists a transplantation $T$ such that $T$
and $T^{-1}$ convert solutions of the Zaremba problem on one manifold
into such on the other manifold. In this case, $T$ is said to be
intertwining.
\end{defn}
Note that if $T\colon L^{2}(M)\longrightarrow L^{2}(\OnSecond M)$
is intertwining, then for any solution $\varphi$ of the Zaremba problem
on $M$ with eigenvalue $\lambda$, we have $(\OnSecond{\Delta}\circ T)(\varphi)=\lambda T(\varphi)=T(\lambda\varphi)=(T\circ\Delta)(\varphi)$.
Since $T$ and $T^{-1}$ map eigenspaces into eigenspaces, the spectra
of $M$ and $\OnSecond M$ coincide.
\begin{prop}
\label{prop:TransDomsAreIso}Transplantable manifolds are isospectral.
\end{prop}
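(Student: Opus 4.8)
The plan is to show that an intertwining transplantation $T$ restricts, for every real number $\lambda$, to a linear isomorphism between the $\lambda$-eigenspace of $M$ and the $\lambda$-eigenspace of $\OnSecond M$. Once this is established, Assumption~A forces the two spectra, listed with multiplicities, to coincide, which is exactly the assertion of Proposition~\ref{prop:TransDomsAreIso}.

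First I would fix $\lambda$ and write $E_\lambda\subseteq L^2(M)$ and $\OnSecond E_\lambda\subseteq L^2(\OnSecond M)$ for the spaces of solutions of the respective Zaremba problems with eigenvalue $\lambda$; by Assumption~A these are finite dimensional (and possibly trivial). Given $\varphi\in E_\lambda$, the fact that $T$ is intertwining means precisely that $\OnSecond\varphi=T(\varphi)$ is again a solution of the Zaremba problem on $\OnSecond M$, and the commutation identity $(\OnSecond\Delta\circ T)(\varphi)=T(\lambda\,\varphi)=\lambda\,T(\varphi)$ displayed just above the proposition shows that its eigenvalue is again $\lambda$. Hence $T(E_\lambda)\subseteq\OnSecond E_\lambda$. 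Since $T\colon L^2(M)\to L^2(\OnSecond M)$ is a bijection, its restriction to $E_\lambda$ is injective, so $\dim E_\lambda\le\dim\OnSecond E_\lambda$.

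Next I would repeat the argument verbatim with the inverse transplantation $T^{-1}$, which is intertwining by hypothesis, obtaining $T^{-1}(\OnSecond E_\lambda)\subseteq E_\lambda$ and therefore $\dim\OnSecond E_\lambda\le\dim E_\lambda$. Consequently $\dim E_\lambda=\dim\OnSecond E_\lambda$ for every $\lambda$; in particular $\lambda$ is an eigenvalue of $M$ if and only if it is an eigenvalue of $\OnSecond M$, and in that case the multiplicities agree. By Assumption~A the spectrum~\eqref{eq:Spectrum} of either domain is exactly the non-decreasing enumeration of its eigenvalues, each repeated according to the (finite) dimension of its eigenspace; since these data coincide, the two enumerations are identical, so $M$ and $\OnSecond M$ are isospectral.

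I do not anticipate a genuine obstacle, because all of the analytic content---elliptic regularity of the transplants $\OnSecond\varphi$ and the boundary conditions they satisfy on $\OnSecond M$---has been packaged into the hypothesis that both $T$ and $T^{-1}$ are intertwining. The two points that still need a word of care are that $T$ really maps the $\lambda$-eigenspace \emph{into the $\lambda$-eigenspace} rather than mixing different eigenvalues, which is the role of the displayed commutation identity, and that no eigenspace is collapsed, which follows from the invertibility of the matrix $T$ acting blockwise on $L^2(M)=\bigoplus_{j=1}^{\NumberOfVertices}L^2(B_j)$.
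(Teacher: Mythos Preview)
Your proposal is correct and follows essentially the same approach as the paper: the paper's argument, given in the two sentences immediately preceding the proposition, is precisely that the displayed commutation identity forces $T$ and $T^{-1}$ to map eigenspaces into eigenspaces, whence the spectra coincide. You have simply spelled out the dimension-counting step in more detail than the paper does.
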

\begin{table}
\noindent \begin{centering}
\begin{tabular}{ll}
Tiled manifold & Loop-signed graph\tabularnewline
\hline 
Building blocks & Vertices\tabularnewline
Reflecting faces & Edges\tabularnewline
- Glued / Unglued & - Links / Loops\tabularnewline
- Indices & - Colors\tabularnewline
- Boundary conditions & - Loop signs\tabularnewline
\end{tabular}
\par\end{centering}

\caption{Graph representation of tiled manifolds.\label{tab:CorrespondenceDomainsGraphs}}
\end{table}

In the style of~\cite{OkadaShudo2001}, we encode tiled manifolds
by edge-colored graphs each of whose loops carries one of the signs
$D$ and $N$ indicating the boundary conditions. The correspondence
is summarized in Table~\ref{tab:CorrespondenceDomainsGraphs}, and
an example is shown in Figure~\ref{fig:GraphRepresentation}, where
we used different types of lines (\emph{straight}, \emph{wavy}, \emph{zig-zag})
instead of colors.
\begin{defn}
A loop\emph{-}signed\emph{ }graph $\Graph$ is a finite graph with
vertices $1,2,\ldots,\NumberOfVertices$ together with edge colors
$1,2,\ldots,\NumberOfColours$ such that at each vertex there is exactly
one incident link or loop of each color and each loop carries one
of the signs $D$ or $N$.
\end{defn}

\begin{defn}
The loopless\emph{ }version of a loop-signed graph is the edge-colored
graph obtained by removing all loops. A loop-signed graph is called
treelike if its loopless version is a tree.
\end{defn}
\begin{figure}
\noindent \begin{centering}
\hfill{}\subfloat[Building block\label{fig:BuildingBlockWithGraph}]{\noindent \begin{centering}
\begin{minipage}[b]{42mm}%
\noindent \begin{center}
\raisebox{5.2mm}{\includegraphics[scale=0.42]{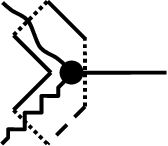}}
\par\end{center}%
\end{minipage}
\par\end{centering}

}\hfill{}\subfloat[Loop-signed graphs\label{fig:GraphRepresentation}]{\centering{}%
\begin{minipage}[b][24.5mm]{85mm}%
\noindent \begin{center}
\psfragBody\includegraphics[scale=0.42]{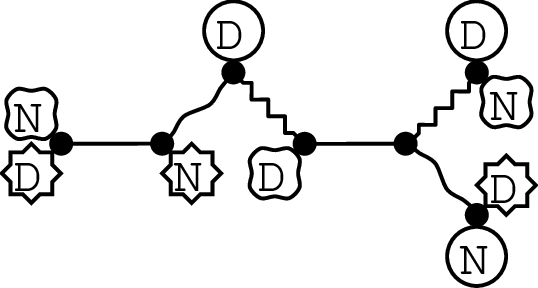}\hspace{5mm}\includegraphics[scale=0.42]{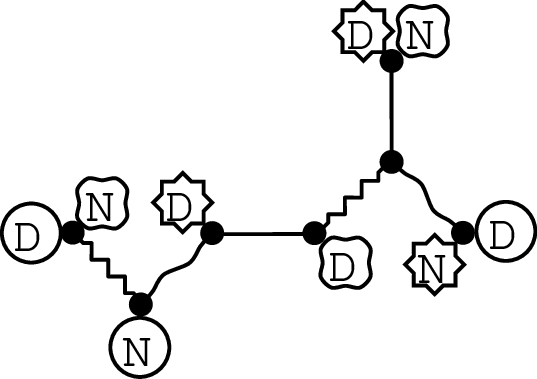}
\par\end{center}%
\end{minipage}}\hfill{}
\par\end{centering}

\caption{Graph representation of the manifolds in Figure~\ref{fig:TiledDomainsWithMixedBC}.}
\end{figure}

\begin{table}
\noindent \centering{}\setlength{\arraycolsep}{2pt}%
\begin{tabular}{ccc}
\emph{$\AdjacencyMatrix^{straight}=\OnSecond{\AdjacencyMatrix}^{zig-zag}$} & \emph{$\AdjacencyMatrix^{zig-zag}=\OnSecond{\AdjacencyMatrix}^{straight}$} & $T=T^{T}=T^{-1}$\tabularnewline
\noalign{\vskip1dd}
{\small{$\left(\begin{array}{ccccccc}
0 & 1 & 0 & 0 & 0 & 0 & 0\\
1 & 0 & 0 & 0 & 0 & 0 & 0\\
0 & 0 & -1 & 0 & 0 & 0 & 0\\
0 & 0 & 0 & 0 & 1 & 0 & 0\\
0 & 0 & 0 & 1 & 0 & 0 & 0\\
0 & 0 & 0 & 0 & 0 & 1 & 0\\
0 & 0 & 0 & 0 & 0 & 0 & -1
\end{array}\right)$}} & {\small{$\left(\begin{array}{ccccccc}
-1 & 0 & 0 & 0 & 0 & 0 & 0\\
0 & 1 & 0 & 0 & 0 & 0 & 0\\
0 & 0 & 0 & 1 & 0 & 0 & 0\\
0 & 0 & 1 & 0 & 0 & 0 & 0\\
0 & 0 & 0 & 0 & 0 & 0 & 1\\
0 & 0 & 0 & 0 & 0 & -1 & 0\\
0 & 0 & 0 & 0 & 1 & 0 & 0
\end{array}\right)$}} & \setlength{\arraycolsep}{1pt}$\frac{1}{2}\negthinspace${\small{$\left(\begin{array}{ccccccc}
-1 & 1 & 1 & 0 & 0 & 0 & 1\\
1 & 1 & 0 & 1 & 1 & 0 & 0\\
1 & 0 & 1 & -1 & 0 & 1 & 0\\
0 & 1 & -1 & 0 & -1 & 1 & 0\\
0 & 1 & 0 & -1 & 0 & -1 & -1\\
0 & 0 & 1 & 1 & -1 & 0 & -1\\
1 & 0 & 0 & 0 & -1 & -1 & 1
\end{array}\right)$}}\tabularnewline
\noalign{\vskip1dd}
\end{tabular}\setlength{\arraycolsep}{\myArraycolsep}\caption{Some adjacency matrices of the graphs in Figure~\ref{fig:GraphRepresentation}
and a transplantation matrix satisfying~(\ref{eq:TransplantationCondition}).\label{tab:AdjacencyMatricesGWWType7} }
\end{table}

Each loop-signed graph is determined by its adjacency\emph{ }matrices
defined as follows.
\begin{defn}
\label{def:AdjacencyMatrices}The adjacency\emph{ }matrices $(\AdjacencyMatrix^{\Colour})_{\Colour=1}^{\NumberOfColours}$
of a loop-signed graph with vertices $1,2,\ldots,\NumberOfVertices$
and edge colors $1,2,\ldots,\NumberOfColours$ are the $\NumberOfVertices\times\NumberOfVertices$
matrices with off-diagonal entries 
\[
\AdjacencyMatrix_{ij}^{\mathbf{c}}=\begin{cases}
1 & \mbox{if vertices }i\mbox{ and }j\mbox{ are joined by a }\Colour\mbox{-colored link}\\
0 & \mbox{otherwise},
\end{cases}
\]
encoding connectivity, and diagonal entries
\begin{equation}
\AdjacencyMatrix_{ii}^{\mathbf{c}}=\begin{cases}
-1 & \text{if vertex }i\text{ has a }\Colour\mbox{-colored loop with sign }D\\
1 & \text{if vertex }i\text{ has a }\Colour\mbox{-colored loop with sign }N\\
0 & \mbox{otherwise},
\end{cases}\label{eq:Sign_convention}
\end{equation}
encoding boundary conditions.
\end{defn}
Note that adjacency matrices are symmetric signed permutation matrices
with non-negative off-diagonal entries. In particular, each row and
each column of an adjacency matrix $\AdjacencyMatrix^{\Colour}$ contains
exactly one non-vanishing entry taking values in $\{-1,1\}$ and 
\[
\AdjacencyMatrix^{\Colour}=(\AdjacencyMatrix^{\Colour})^{T}=(\AdjacencyMatrix^{\Colour})^{-1}.
\]
Table~\ref{tab:AdjacencyMatricesGWWType7} lists some of the adjacency
matrices of the loop-signed graphs in Figure~\ref{fig:GraphRepresentation}
with vertices numbered from left to right. In general, a vertex renumbering
gives rise to new adjacency matrices of the form
\begin{equation}
P\AdjacencyMatrix^{\Colour}P^{-1}\qquad\text{for some permutation matrix }P.\label{eq:Renumbering}
\end{equation}
With regard to the following central transplantability criterion,
this corresponds to the special case of isometric tiled manifolds. 

\begin{TransplantationTheorem}\label{thm: TransplantationTheorem}
Let $M$ and $\OnSecond M$ be tiled manifolds with Zaremba problems
that are described by adjacency matrices $(\AdjacencyMatrix^{\Colour})_{\Colour=1}^{\NumberOfColours}$
and $(\OnSecond{\AdjacencyMatrix}^{\Colour})_{\Colour=1}^{\NumberOfColours}$,
respectively. Then, a transplantation between $M$ and $\OnSecond M$
with transplantation matrix $T$ is intertwining if and only if
\begin{equation}
\OnSecond{\AdjacencyMatrix}^{\Colour}=T\AdjacencyMatrix^{\Colour}T^{-1}\qquad\text{for all edge colors }\Colour.\label{eq:TransplantationCondition}
\end{equation}
In particular, $M$ and $\OnSecond M$ are transplantable precisely
if there exists an invertible real matrix $T$ satisfying (\ref{eq:TransplantationCondition}).

\end{TransplantationTheorem}

Note that~(\ref{eq:Renumbering}) implies that transplantability
is independent of the chosen numberings of the building blocks of
$M$ and $\OnSecond M$. The condition~(\ref{eq:TransplantationCondition})
first appeared in~\cite{OkadaShudo2001}, where the case of treelike
graphs with uniform loop signs is considered. As an example, the matrices
in Table~\ref{tab:AdjacencyMatricesGWWType7} satisfy~(\ref{eq:TransplantationCondition})
and the domains in Figure~\ref{fig:DomainsWithMixedBC} are isospectral.
The Transplantation Theorem motivates the following definition.
\begin{defn}
Two loop-signed graphs given by adjacency matrices $(\AdjacencyMatrix^{\Colour})_{\Colour=1}^{\NumberOfColours}$
and $(\OnSecond{\AdjacencyMatrix}^{\Colour})_{\Colour=1}^{\NumberOfColours}$
are called transplantable if there exists a transplantation matrix
$T$ satisfying $\OnSecond{\AdjacencyMatrix}^{\Colour}=T\AdjacencyMatrix^{\Colour}T^{-1}$
for $\Colour=1,2,\ldots,\NumberOfColours$. The graphs are called
isomorphic if $T$ can be chosen as a permutation matrix.
\end{defn}
In order to prove the Transplantation Theorem, we start with a regularity
theorem, which will imply that solutions to Zaremba problems can be
extended across reflecting faces.

\begin{RegularityTheorem}\label{thm:RegularityTheorem} Let $M$
be a tiled manifold consisting of building blocks $B_{1},B_{2},\ldots,B_{\NumberOfVertices}$.
If $\varphi\in C^{1}(M^{\circ})\cap C^{\infty}(\bigcup_{i=1}^{\NumberOfVertices}B_{i}^{\circ})$
and $\Delta\varphi=\lambda\varphi$ on $\bigcup_{i=1}^{\NumberOfVertices}B_{i}^{\circ}$
for some $\lambda\geq0$, then $\varphi\in C^{\infty}(M^{\circ})$
and $\Delta\varphi=\lambda\varphi$ on $M^{\circ}$.\end{RegularityTheorem}
\begin{proof}
According to Definition~\ref{def:Tiled_Domain}, we only have to
consider inner faces. Using local isometries of the form (\ref{eq:Chart_near_inner_segment}),
the claim reduces to the statement that if $\varphi\in C^{1}((-l,l)^{\Dimension})\cap C^{\infty}((-l,l)^{\Dimension}\backslash\{x_{d}=0\})$
for some $l>0$ and $\Delta\varphi=\lambda\varphi$ on $(-l,l)^{\Dimension}\backslash\{x_{d}=0\}$,
then $\varphi\in C^{\infty}((-l,l)^{\Dimension})$. This follows from
the elliptic regularity theorem~\cite[Chapter 6, Theorem 3]{Evans2010}
once we verified that $\varphi$ is a weak solution of $(\Delta-\lambda)\varphi=0$
on $(-l,l)^{\Dimension}$. In other words, if $\psi\in C_{0}^{\infty}(-l,l)^{\Dimension}$
and
\[
I(\alpha,\beta)=\int_{\alpha}^{\beta}\int_{-l}^{l}\ldots\int_{-l}^{l}\biggl(\sum_{i=1}^{\Dimension}\frac{\partial\varphi}{\partial x_{i}}\frac{\partial\psi}{\partial x_{i}}-\lambda\varphi\psi\biggr)dx_{1}\ldots dx_{\Dimension-1}dx_{\Dimension},
\]
we must show that $I(-l,l)=0$. If we apply Fubini's theorem and integrate
by parts for either $\alpha=-l$ and \textbf{$\beta<0$} or $\alpha>0$
and $\beta=l$, we obtain
\begin{eqnarray*}
I(\alpha,\beta) & = & \int_{-l}^{l}\ldots\int_{-l}^{l}\biggl(\frac{\partial\varphi}{\partial x_{d}}\,\psi\biggr)(x_{1},\ldots,x_{\Dimension-1},\beta)dx_{1}\ldots dx_{\Dimension-1}\\
 & - & \int_{-l}^{l}\ldots\int_{-l}^{l}\biggl(\frac{\partial\varphi}{\partial x_{d}}\,\psi\biggr)(x_{1},\ldots,x_{\Dimension-1},\alpha)dx_{1}\ldots dx_{\Dimension-1},
\end{eqnarray*}
where we used that $\psi\in C_{0}^{\infty}(-l,l)^{\Dimension}$ and
$(\Delta-\lambda)\varphi=0$ on the open set $(-l,l)^{\Dimension}\backslash\{x_{d}=0\}$.
As $\varphi\in C^{1}((-l,l)^{\Dimension})$ and $\psi\in C_{0}^{\infty}(-l,l)^{\Dimension}$,
we obtain $I(-l,0)=-I(0,l)$, which implies $I(-l,l)=0$ as desired.
\end{proof}
\begin{ReflectionPrinciple}\label{thm:ReflectionPrinciple}Let $B$
be a building block and let $\partial_{R}B$ be one of its reflecting
faces. If $\varphi\in C^{\infty}(B^{\circ}\cup\partial_{R}B)$ and
$\Delta\varphi=\lambda\varphi$ on $B^{\circ}$ for some $\lambda\geq0$
as well as either
\[
\frac{\partial\varphi}{\partial n}|_{\partial_{R}B}\equiv0\qquad\text{or}\qquad\varphi|_{\partial_{R}B}\equiv0,
\]
then $\varphi$ can be smoothly extended across $\partial_{R}B$ by
itself (Neumann case) or by $-\varphi$ (Dirichlet case), respectively.
That is, if we reflect $B$ in $\partial_{R}B$ and take $\pm\varphi$
on its pasted copy, then the resulting function is smooth on the interior
of the resulting tiled manifold consisting of two copies of $B$.\end{ReflectionPrinciple}
\begin{proof}
In view of the Regularity Theorem, it suffices to show that the resulting
function is continuously differentiable near $\partial_{R}B$. If
we argue as before, the claim reduces to the statement that if $\varphi\in C^{\infty}((-l,l)^{\Dimension-1}\times(-l,0])$
for some $l>0$ and $\Delta\varphi=\lambda\varphi$ on $(-l,l)^{\Dimension-1}\times(-l,0)$
as well as either 
\[
\frac{\partial\varphi}{\partial x_{\Dimension}}|_{x_{\Dimension}=0}\equiv0\qquad\text{or }\qquad\varphi|_{x_{\Dimension}=0}\equiv0,
\]
then $\varphi$ can be smoothly extended to $(-l,l)^{\Dimension}$
by defining 
\[
\varphi(x_{1},\ldots,x_{\Dimension-1},x_{\Dimension})=\pm\varphi(x_{1},\ldots,x_{\Dimension-1},-x_{\Dimension})\qquad\textnormal{for }x_{\Dimension}>0.
\]
In the Neumann case, we immediately obtain that $\varphi\in C((-l,l)^{\Dimension})$
as well as $\frac{\partial\varphi}{\partial x_{i}}\in C((-l,l)^{\Dimension})$
for $i=1,2,\ldots,\Dimension-1$. Since $\frac{\partial\varphi}{\partial x_{d}}$
vanishes on the hyperplane $\{x_{\Dimension}=0\}$, we also have $\frac{\partial\varphi}{\partial x_{d}}\in C((-l,l)^{\Dimension})$
as desired. In the Dirichlet case, $\varphi$ and therefore also $\frac{\partial\varphi}{\partial x_{i}}$
vanish on $\{x_{\Dimension}=0\}$ for $i=1,2,\ldots,\Dimension-1$.
The continuity of $\frac{\partial\varphi}{\partial x_{\Dimension}}$
follows from
\[
\frac{\partial\varphi}{\partial x_{\Dimension}}(x_{1},\ldots,x_{\Dimension-1},x_{\Dimension})=\frac{\partial\varphi}{\partial x_{\Dimension}}(x_{1},\ldots,x_{\Dimension-1},-x_{\Dimension})\qquad\textnormal{for }x_{\Dimension}>0.
\]

\end{proof}
The following uniqueness theorem follows from a classical result by
Aronszajn~\cite{Aronszajn1957}. In the case of building blocks with
analytic metrics, it is a corollary of elliptic regularity theory.

\begin{UniqueContinuationTheorem}\label{thm:UniqueContinuationTheorem}
Let $B$ be a connected building block and let $\partial_{R}B$ be
one of its reflecting faces. If $\varphi\in C^{\infty}(B^{\circ}\cup\partial_{R}B)$
and $\Delta\varphi=\lambda\varphi$ on $B^{\circ}$ for some $\lambda\geq0$,
then any extension of $\varphi$ across $\partial_{R}B$ to a smooth
eigenfunction of $\Delta$ is unique. That is, if we reflect $B$
in $\partial_{R}B$, then there is at most one function on its pasted
copy that extends $\varphi$ to a smooth eigenfunction of $\Delta$
on the interior of the resulting tiled manifold consisting of two
copies of $B$.\end{UniqueContinuationTheorem}

The preceding existence and uniqueness theorems are summarized in
the following proposition, which justifies the sign convention (\ref{eq:Sign_convention})
and will imply the Transplantation Theorem.
\begin{prop}
Let $M$ be a tiled manifold with connected building blocks $B_{1},B_{2},\ldots,B_{\NumberOfVertices}$.
If $\varphi$ is a solution of the Zaremba problem on $M$ given by
the adjacency matrices $(\AdjacencyMatrix^{\Colour})_{\Colour=1}^{\NumberOfColours}$,
then for each $i=1,2,\ldots,\NumberOfVertices$, the restriction $\varphi_{i}$
of $\varphi$ to $B_{i}$ has a unique extension to a smooth eigenfunction
of $\Delta$ across each non-empty reflecting face $\partial_{R}^{\Colour}B_{i}$,
and this extension is the non-vanishing summand of $\sum_{j}\AdjacencyMatrix_{ij}^{\Colour}\varphi_{j}$.\label{prop:Natural_Sign_Conventions}\end{prop}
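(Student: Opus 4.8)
The plan is to prove the proposition one building block and one colour at a time, reducing everything to the Reflection Principle and the Unique Continuation Theorem. So I would fix $i\in\{1,\ldots,\NumberOfVertices\}$ and a colour $\Colour$ with $\partial_R^{\Colour}B_i\neq\varnothing$. The first step is to check the hypotheses of those two theorems for the restriction $\varphi_i$: since $\varphi$ solves the Zaremba problem on $M$, i.e.\ $\varphi\in C(M)\cap C^{\infty}(M^{\circ}\cup\partial_{D}M\cup\partial_{N}M)$, the function $\varphi_i$ lies in $C^{\infty}(B_i^{\circ}\cup\partial_R^{\Colour}B_i)$ and satisfies $\Delta\varphi_i=\lambda\Times\varphi_i$ on $B_i^{\circ}$. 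The second step is to use the structure of the associated loop-signed graph: because $\partial_R^{\Colour}B_i\neq\varnothing$, exactly one of the following holds --- (i) vertex $i$ carries a $\Colour$-coloured edge to a unique vertex $j\neq i$, so the only non-zero entry in the $i$-th row of $\AdjacencyMatrix^{\Colour}$ is $\AdjacencyMatrix^{\Colour}_{ij}=1$; (ii) vertex $i$ carries a $\Colour$-coloured loop with sign $D$, so $\AdjacencyMatrix^{\Colour}_{ii}=-1$; (iii) vertex $i$ carries a $\Colour$-coloured loop with sign $N$, so $\AdjacencyMatrix^{\Colour}_{ii}=1$. Accordingly, $\sum_{j}\AdjacencyMatrix^{\Colour}_{ij}\Times\varphi_j$ collapses to $\varphi_j$, to $-\varphi_i$, and to $\varphi_i$, respectively, and it is this single summand with non-zero coefficient whose extension property I must verify.

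The third step is existence, handled case by case. In case (i), $\partial_R^{\Colour}B_i$ is an inner segment, hence lies in $M^{\circ}$, where $\varphi$ is already smooth; reading off a chart of the form \eqref{eq:Chart_near_inner_segment} shows that the block $B_j$ glued to $B_i$ along this segment is precisely the reflected copy appearing in the Reflection Principle, so $\varphi_j$ is a smooth eigenfunction of $\Delta$ across $\partial_R^{\Colour}B_i$ extending $\varphi_i$. In cases (ii) and (iii), $\partial_R^{\Colour}B_i$ is an outer segment lying in $\partial_{D}M$, respectively $\partial_{N}M$, so $\varphi_i$ vanishes, respectively has vanishing normal derivative, on $\partial_R^{\Colour}B_i$; the Dirichlet, respectively Neumann, case of the Reflection Principle then produces the smooth extension, equal to $-\varphi_i$, respectively $\varphi_i$, on the reflected copy. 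In every case the extension coincides with the summand identified above.

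The fourth step is uniqueness: since the building blocks are connected, the Unique Continuation Theorem applies to $\varphi_i$ across $\partial_R^{\Colour}B_i$, so there is at most one smooth eigenfunction extension; together with the existence just shown, it is exactly the non-vanishing summand of $\sum_j\AdjacencyMatrix^{\Colour}_{ij}\Times\varphi_j$. Letting $i$ and $\Colour$ range over all admissible values completes the argument.

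I do not expect a real obstacle here --- the proposition is a bookkeeping consequence of the Reflection Principle, the Unique Continuation Theorem, and the sign convention \eqref{eq:Sign_convention}. The only point that needs care is the identification, in case (i), of the concrete gluing inside $M$ with the abstract ``reflect and paste'' operation used in the statements of those theorems, which is exactly what the chart condition \eqref{eq:Chart_near_inner_segment} in the definition of a tiled domain is there to guarantee; I would also spell out that ``non-vanishing summand'' means the term with non-zero coefficient $\AdjacencyMatrix^{\Colour}_{ij}$. Finally, I would flag that this proposition is the workhorse for the Transplantation Theorem: once the transplanted restrictions are written as $\OnSecond{\varphi}_i=\sum_j T_{ij}\Times\varphi_j$, the extension identities collected here turn the matrix relation $\OnSecond{\AdjacencyMatrix}^{\Colour}=T\Times\AdjacencyMatrix^{\Colour}\Times T^{-1}$ into the statement that each $\OnSecond{\varphi}_i$ glues smoothly, with the prescribed boundary behaviour, across every segment of $\OnSecond M$.
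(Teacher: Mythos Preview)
Your proposal is correct and matches the paper's approach exactly. The paper does not give an explicit proof of this proposition; it introduces it with the sentence ``The preceding existence and uniqueness theorems are summarised in the following proposition,'' treating it as an immediate consequence of the Reflection Principle and the Unique Continuation Theorem, which is precisely the case analysis you spell out.
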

\begin{proof}
[Proof of the Transplantation Theorem] We may assume that the underlying
building block is connected, otherwise we consider its components
separately. We first show that a transplantation is intertwining if
its transplantation matrix $T$ satisfies~(\ref{eq:TransplantationCondition}).
In other words, we show that if $\varphi$ solves the Zaremba problem
represented by $(\AdjacencyMatrix^{\Colour})_{\Colour=1}^{\NumberOfColours}$,
then $\OnSecond{\varphi}=T(\varphi)$ given by 
\begin{equation}
\OnSecond{\varphi}_{i}=\sum_{k}T_{ik}\varphi_{k}\label{eq:ithComponentByTransplantation}
\end{equation}
solves the Zaremba problem represented by $(\OnSecond{\AdjacencyMatrix}^{\Colour})_{\Colour=1}^{\NumberOfColours}$,
where $(\varphi_{k})_{k}$ and $(\OnSecond{\varphi}_{i})_{i}$ denote
the restrictions of $\varphi$ and $\OnSecond{\varphi}$ to the blocks
of $M$ and $\OnSecond M$, respectively. We repeatedly regard $\varphi$
and $\OnSecond{\varphi}$ as column vectors of their restrictions
$(\varphi_{k})_{k}$ and $(\OnSecond{\varphi}_{i})_{i}$, in particular,
$\OnSecond{\varphi}=T\varphi$ as vectors.

In order to prove that $\OnSecond{\varphi}\in C(\OnSecond M)\cap C^{\infty}(\OnSecond M^{\circ}\cup\partial_{D}\OnSecond M\cup\partial_{N}\OnSecond M)$,
it suffices to show that whenever two blocks of $\OnSecond M$ numbered
$i$ and $j$ share an inner face corresponding to some edge color
$\Colour$, that is, if $\OnSecond{\AdjacencyMatrix}_{ij}^{\Colour}=1$,
then $\OnSecond{\varphi}_{i}$ and $\OnSecond{\varphi}_{j}$ are smoothly
connected. Assumption~(\ref{eq:TransplantationCondition}) yields
\[
\OnSecond{\AdjacencyMatrix}^{\Colour}\OnSecond{\varphi}=\OnSecond{\AdjacencyMatrix}^{\Colour}T\varphi=T\AdjacencyMatrix^{\Colour}\varphi.
\]
Since $\OnSecond{\AdjacencyMatrix}_{ij}^{\Colour}$ is the only non-vanishing
entry of the $i$th row of $\OnSecond{\AdjacencyMatrix}^{\Colour}$,
we have
\begin{equation}
\OnSecond{\varphi}_{j}=\OnSecond{\AdjacencyMatrix}_{ij}^{\Colour}\OnSecond{\varphi}_{j}=(\OnSecond{\AdjacencyMatrix}^{\Colour}\OnSecond{\varphi})_{i}=(T\AdjacencyMatrix^{\Colour}\varphi)_{i}=\sum_{k}T_{ik}\biggl(\sum_{l}\AdjacencyMatrix_{kl}^{\Colour}\varphi_{l}\biggr).\label{eq:jthComponentByAdjacency}
\end{equation}
For each value of $k$ in~(\ref{eq:ithComponentByTransplantation})
and~(\ref{eq:jthComponentByAdjacency}), Proposition~\ref{prop:Natural_Sign_Conventions}
says that $\sum_{l}\AdjacencyMatrix_{kl}^{\Colour}\varphi_{l}$ is
a smooth extension of $\varphi_{k}$, showing that $\OnSecond{\varphi}_{i}$
and $\OnSecond{\varphi}_{j}$ are smoothly connected. In order to
prove that $\OnSecond{\varphi}$ satisfies the boundary conditions
given by $(\OnSecond{\AdjacencyMatrix}^{\Colour})_{\Colour=1}^{\NumberOfColours}$,
it suffices to consider outer faces of $\OnSecond M$. Assume that
its $i$th block $\OnSecond B_{i}$ has an outer reflecting face $\partial_{R}^{\Colour}\OnSecond B_{i}$
carrying Neumann or Dirichlet boundary conditions, that is, $\OnSecond{\AdjacencyMatrix}_{ii}^{\Colour}=\pm1$.
Arguing as before, we obtain that
\[
\pm\OnSecond{\varphi}_{i}=\OnSecond{\AdjacencyMatrix}_{ii}^{\Colour}\OnSecond{\varphi}_{i}=(\OnSecond{\AdjacencyMatrix}^{\Colour}\OnSecond{\varphi})_{i}=(T\AdjacencyMatrix^{\Colour}\varphi)_{i}=\sum_{k}T_{ik}\biggl(\sum_{l}\AdjacencyMatrix_{kl}^{\Colour}\varphi_{l}\biggr)
\]
extends $\OnSecond{\varphi}_{i}$ smoothly across $\partial_{R}^{\Colour}\OnSecond B_{i}$.
As the resulting function is symmetric, respectively antisymmetric,
with respect to reflection in $\partial_{R}^{\Colour}\OnSecond B_{i}$,
we see that $\frac{\partial\OnSecond{\varphi}}{\partial n}$, respectively
$\OnSecond{\varphi}$, has to vanish along $\partial_{R}^{\Colour}\OnSecond B_{i}$.
Finally, note that since $\AdjacencyMatrix^{\Colour}=T^{-1}\OnSecond{\AdjacencyMatrix}^{\Colour}(T^{-1})^{-1}$
for all edge colors $\Colour$, the given arguments equally apply
to the inverse transplantation. Hence, the transplantation given by
$T$ is intertwining.

In the following, we show that an intertwining transplantation satisfies~(\ref{eq:TransplantationCondition}).
If $\varphi$ is a solution of the Zaremba problem represented by
$(\AdjacencyMatrix^{\Colour})_{\Colour=1}^{\NumberOfColours}$, then
$\OnSecond{\varphi}=T(\varphi)$ with restrictions $\OnSecond{\varphi}_{i}=\sum_{k}T_{ik}\varphi_{k}$
solves the Zaremba problem represented by $(\OnSecond{\AdjacencyMatrix}^{\Colour})_{\Colour=1}^{\NumberOfColours}$.
If we apply Proposition~\ref{prop:Natural_Sign_Conventions} twice,
we obtain that $\OnSecond{\varphi}_{i}$ is smoothly extended across
$\partial_{R}^{\Colour}\OnSecond B_{i}$ by
\[
\sum_{j}\OnSecond{\AdjacencyMatrix}_{ij}^{\Colour}\OnSecond{\varphi}_{j}\qquad\text{ as well as }\qquad\sum_{k}T_{ik}\biggl(\sum_{l}\AdjacencyMatrix_{kl}^{\Colour}\varphi_{l}\biggr),
\]
which have to coincide according to the Unique Continuation Theorem.
In particular, 
\begin{equation}
\OnSecond{\AdjacencyMatrix}^{\Colour}T\varphi=\OnSecond{\AdjacencyMatrix}^{\Colour}\OnSecond{\varphi}=T\AdjacencyMatrix^{\Colour}\varphi\label{eq:EqualVectors}
\end{equation}
for any solution $\varphi$ of the Zaremba problem on $M$. We show
that this implies $\OnSecond{\AdjacencyMatrix}^{\Colour}T=T\AdjacencyMatrix^{\Colour}$.
Each row $(a_{i1},a_{i2},\ldots,a_{i\NumberOfVertices})$ of $\OnSecond{\AdjacencyMatrix}^{\Colour}T-T\AdjacencyMatrix^{\Colour}$
may be identified with an $L^{2}$-function $\psi^{i}$, whose restriction
to the interior of block $j$ of $M$ is equal to $a_{ij}$. Then,
(\ref{eq:EqualVectors}) says that $\psi^{i}$ is $L^{2}$-orthogonal
to all solutions of the Zaremba problem on $M$. Assumption~A implies
that $\psi^{i}=0$, hence, $a_{i1}=a_{i2}=\cdots=a_{i\NumberOfVertices}=0$,
which completes the proof.
\end{proof}
Intertwining transplantations do not superpose Dirichlet with Neumann
boundary conditions in the following sense. If the $k$th block of
$M$ and the $i$th block of $\OnSecond M$ have outer faces that
correspond to the same edge color~$\Colour$, but that carry different
boundary conditions, that is, if $\AdjacencyMatrix_{kk}^{\Colour}\OnSecond{\AdjacencyMatrix}_{ii}^{\Colour}=-1$,
then $T_{ik}=0$ since (\ref{eq:TransplantationCondition}) implies
\[
T_{ik}\AdjacencyMatrix_{kk}^{\Colour}=\left(T\AdjacencyMatrix^{\Colour}\right)_{ik}=(\OnSecond{\AdjacencyMatrix}^{\Colour}T)_{ik}=\OnSecond{\AdjacencyMatrix}_{ii}^{\Colour}T_{ik}.
\]

Okada and Shudo~\cite{OkadaShudo2001} studied the relation between
isospectrality and isolength spectrality and derived a special case
of the following theorem.

\begin{TraceTheorem}\label{thm:TraceTheorem}Two loop-signed graphs
with adjacency matrices $(\AdjacencyMatrix^{\Colour})_{\Colour=1}^{\NumberOfColours}$
and $(\OnSecond{\AdjacencyMatrix}^{\Colour})_{\Colour=1}^{\NumberOfColours}$
are transplantable if and only if for all finite sequences $\Colour_{1}\Colour_{2}\ldots\Colour_{l}$
of edge colors
\begin{equation}
\Trace(\OnSecond{\AdjacencyMatrix}^{\Colour_{1}}\OnSecond{\AdjacencyMatrix}^{\Colour_{2}}\cdots\OnSecond{\AdjacencyMatrix}^{\Colour_{l}})=\Trace(\AdjacencyMatrix^{\Colour_{1}}\AdjacencyMatrix^{\Colour_{2}}\cdots\AdjacencyMatrix^{\Colour_{l}}).\label{eq:TraceCondition}
\end{equation}

\end{TraceTheorem}
\begin{proof}
The proof in~\cite{OkadaShudo2001} extends to our setting. At first,
assume that the graphs are transplantable. According to the Transplantation
Theorem, there exists an invertible real matrix $T$ satisfying 
\[
\OnSecond{\AdjacencyMatrix}^{\Colour}=T\AdjacencyMatrix^{\Colour}T^{-1}\qquad\text{for }\Colour=1,2,\ldots,\NumberOfColours,
\]
which immediately implies (\ref{eq:TraceCondition}) using the cyclic
invariance of the trace. In order to show that (\ref{eq:TraceCondition})
implies transplantability, we consider the groups 
\begin{equation}
G=\langle\AdjacencyMatrix^{1},\AdjacencyMatrix^{2},\ldots,\AdjacencyMatrix^{\NumberOfColours}\rangle\qquad\mbox{and}\qquad\OnSecond G=\langle\OnSecond{\AdjacencyMatrix}^{1},\OnSecond{\AdjacencyMatrix}^{2},\ldots,\OnSecond{\AdjacencyMatrix}^{\NumberOfColours}\rangle.\label{eq:Operator_Groups}
\end{equation}
Since the empty word $\varnothing$ is associated with the identity
matrices $I_{\NumberOfVertices}\in G$ and $I_{\OnSecond{\NumberOfVertices}}\in\OnSecond G$,
each of the graphs has $\NumberOfVertices=\OnSecond{\NumberOfVertices}$
vertices. Note that $G$ and $\OnSecond G$ are finite as they act
faithfully on
\[
\{\BasisVector_{1},\BasisVector_{2},\ldots,\BasisVector_{\NumberOfVertices},-\BasisVector_{1},-\BasisVector_{2},\ldots,-\BasisVector_{\NumberOfVertices}\},
\]
where $\BasisVector_{i}$ denotes the $i$th standard basis vector
of $\mathbb{R}^{\NumberOfVertices}$. We let $F^{\NumberOfColours}$
denote the free group generated by the letters $1,2,\ldots,\NumberOfColours$
and consider the surjective homomorphism $\HomFKtoG\colon F^{\NumberOfColours}\twoheadrightarrow G$
given by
\[
\HomFKtoG(\Colour_{1}^{\pm1}\Colour_{2}^{\pm1}\ldots\Colour_{l}^{\pm1})=\AdjacencyMatrix^{\Colour_{1}}\AdjacencyMatrix^{\Colour_{2}}\cdots\AdjacencyMatrix^{\Colour_{l}},
\]
which is well-defined as adjacency matrices are self-inverse. We define
$\OnSecond{\HomFKtoG}\colon F^{\NumberOfColours}\twoheadrightarrow\OnSecond G$
analogously. Since $I_{\NumberOfVertices}$ is the only element with
trace $\NumberOfVertices$ in $G$ and in $\OnSecond G$, we have
\[
\begin{array}{ccccc}
\ker(\HomFKtoG) & = & \{\word\in F^{\NumberOfColours}\,|\,\Trace(\HomFKtoG(\word))=\NumberOfVertices\}\\
 & = & \{\word\in F^{\NumberOfColours}\,|\,\Trace(\OnSecond{\HomFKtoG}(\word))=\NumberOfVertices\} & = & \ker(\OnSecond{\HomFKtoG}).
\end{array}
\]
Hence, $\IsoGandGhat\colon G\rightarrow\OnSecond G$ given by $\IsoGandGhat(\HomFKtoG(\word))=\OnSecond{\HomFKtoG}(\word)$
for $w\in F^{\NumberOfColours}$ defines an isomorphism. We obtain
two representations of $G$, 
\[
id_{G}\colon G\rightarrow GL(\NumberOfVertices,\mathbb{C})\qquad\text{and}\qquad id_{\OnSecond G}\circ\IsoGandGhat\colon G\rightarrow GL(V,\mathbb{C}),
\]
where $id_{G}$ and $id_{\OnSecond G}$ are the identity maps on $G$
and $\OnSecond G$, respectively. Their characters are the mappings
$\HomFKtoG(\word)\mapsto\Trace(\HomFKtoG(\word))$ and $\HomFKtoG(\word)\mapsto\Trace(\OnSecond{\HomFKtoG}(\word))$
for $w\in F^{\NumberOfColours}$, which are equal by~(\ref{eq:TraceCondition}).
Thus, $id_{G}$ and $id_{\OnSecond G}\circ\IsoGandGhat$ are equivalent
and there exists $T\in GL(\NumberOfVertices,\mathbb{C})$ such that
\[
T\AdjacencyMatrix^{\Colour}=T\HomFKtoG(\Colour)=\OnSecond{\HomFKtoG}(\Colour)T=\OnSecond{\AdjacencyMatrix}^{\Colour}T\qquad\mbox{for }\Colour=1,2,\ldots,\NumberOfColours.
\]
For $z\in\mathbb{C}$, let $T(z)=\mathrm{Re}(T)+z\mathrm{Im}(T)$
and note that $T(z)\AdjacencyMatrix^{\Colour}=\OnSecond{\AdjacencyMatrix}^{\Colour}T(z)$
for $\Colour=1,2,\ldots,\NumberOfColours$. Since $\det(T(i))\neq0$,
the mapping $z\mapsto\det(T(z))$ defines a non-zero polynomial. Hence,
we can choose $r\in\mathbb{R}$ such that $T(r)\in GL(\NumberOfVertices,\mathbb{R})$,
which completes the proof.
\end{proof}
Thas~et~al.~\cite{Thas2006a,Thas2006b,Thas2006,SchillewaertThas2011}
partially classified the groups appearing in (\ref{eq:Operator_Groups})
for transplantable treelike graphs with uniform loop signs. Recall
that transplantable manifolds have equal heat invariants and therefore
share certain geometric properties, some of which can be identified
with expressions of the form (\ref{eq:TraceCondition}):
\begin{itemize}
\item $\Trace(\varnothing)$ encodes the number of building blocks, that
is, the volume of a tiled manifold.
\item $\Trace(\AdjacencyMatrix^{\Colour})$ encodes the difference of the
Neumann boundary volume and the Dirichlet boundary volume coming from
outer faces that correspond to $\Colour$-colored loops.
\item For tiled manifolds $M$ that are polygons, the quantity 
\begin{equation}
\sum_{DD}\frac{\pi^{2}-\alpha^{2}}{\alpha}+\sum_{NN}\frac{\pi^{2}-\alpha^{2}}{\alpha}-\frac{1}{2}\sum_{DN}\frac{\pi^{2}+2\alpha^{2}}{\alpha}.\label{eq:SpectralInvariantInvolvingAngles}
\end{equation}
is a spectral invariant, where the sums are taken over all corners
of $\partial M$ formed by Dirichlet-Dirichlet ($DD$), Neumann-Neumann
($NN$) and Dirichlet-Neumann ($DN$) sides, respectively; in each
case, $\alpha$ is the corresponding angle~\cite[Theorem 5.1]{LevitinParnovskiPolterovich2006}.
Let $\Colour_{1}$ and $\Colour_{2}$ be two edge colors corresponding
to neighboring sides of the underlying building block, enclosing the
angle $\beta$. If $C_{DD}$, $C_{NN}$ and $C_{DN}$ denote the number
of corners of $M$ that are formed by sides corresponding to $\Colour_{1}$
and $\Colour_{2}$, respectively, carrying Dirichlet-Dirichlet, Neumann-Neumann
and Dirichlet-Neumann boundary conditions, then the contribution of
these corners to~(\ref{eq:SpectralInvariantInvolvingAngles}) is
\[
\frac{\pi^{2}}{\beta}\left(C_{DD}+C_{NN}-\frac{1}{2}C_{DN}\right)-\beta\left(C_{DD}+C_{NN}+C_{DN}\right).
\]
If the underlying graph is treelike, this quantity is determined by
\begin{eqnarray*}
\Trace(\AdjacencyMatrix^{\Colour_{1}}\AdjacencyMatrix^{\Colour_{2}}) & = & C_{DD}+C_{NN}-C_{DN}\text{ and}\\
\Trace(\AdjacencyMatrix^{\Colour_{1}}\AdjacencyMatrix^{\Colour_{2}}\AdjacencyMatrix^{\Colour_{1}}\AdjacencyMatrix^{\Colour_{2}}) & = & C_{DD}+C_{NN}+C_{DN}.
\end{eqnarray*}

\end{itemize}
The Trace Theorem allows for a computer-aided search for transplantable
pairs as described in Section~\ref{sub:TheAlgorithm}.

\section{Transplantability and induced representations\label{sec:IndRepAndTrans}}

In order to characterize transplantability in group-theoretic terms,
we recall the Sunada method~\cite{Sunada1985}.
\begin{defn}
A triple $(G,H,\OnSecond H)$ consisting of a finite group $G$ with
subgroups $H$ and $\OnSecond H$ is called a Gassmann triple if each
conjugacy class $[g]\subseteq G$ satisfies
\begin{equation}
|\left[g\right]\cap H|=|\left[g\right]\cap\OnSecond H|.\label{eq:GassmannCondition}
\end{equation}
\end{defn}
\begin{thm}
(\cite{Sunada1985}) If $(G,H,\OnSecond H)$ is a Gassmann triple
such that $G$ acts freely on some closed Riemannian manifold $M$
by isometries, then $M/H$ and $M/\OnSecond H$ are isospectral.
\end{thm}
The Gassmann criterion (\ref{eq:GassmannCondition}) is equivalent
to the condition that the induced representations $\InducedRep HG{\boldsymbol{1}_{H}}$
and $\InducedRep{\OnSecond H}G{\boldsymbol{1}_{\OnSecond H}}$ of
the trivial representations $\boldsymbol{1}_{H}$ and $\boldsymbol{1}_{\OnSecond H}$
of $H$ and $\OnSecond H$ are equivalent~\cite{Brooks1999}. Band~et~al.
extended the Sunada method using non-trivial representations \cite{BandParzanchevskiBen-Shach2009,ParzanchevskiBand2010}.
In particular, they showed that if $G$ is a finite group of isometries
of the tiled manifold $M$ mapping $\partial_{D}M$ and $\partial_{N}M$
to themselves and if $H$ and $\OnSecond H$ are subgroups of $G$
with one-dimensional real representations $R$ and $\OnSecond R$
such that $\InducedRep HGR\simeq\InducedRep{\OnSecond H}G{\OnSecond R}$,
then $R$ and $\OnSecond R$ give rise to mixed boundary conditions
on $M/H$ and $M/\OnSecond H$ turning them into isospectral manifolds.
Considering the action of $G$ on $L^{2}(M)$, they showed that $R$
and $\OnSecond R$ can be used to single out spaces of solutions of
the Zaremba problem on $M$ with a particular transformation behavior
with respect to crossings between fundamental domains of the actions
of $H$ and $\OnSecond H$ on $M$. As an example, take the square
$\Square$ in Figure~\ref{fig:TheSquareS} carrying Dirichlet boundary
conditions. Its isometry group is the dihedral group
\[
G=D_{4}=\{e,\sigma,\sigma^{2},\sigma^{3},\tau,\tau\sigma,\tau\sigma^{2},\tau\sigma^{3}\},
\]
where $\tau$ and $\sigma$ denote vertical reflection and rotation
by $\frac{\pi}{2}$, respectively. The group $G$ has subgroups $H$
and $\OnSecond H$ with respective representations $R$ and $\OnSecond R$
given by 
\begin{equation}
\begin{array}{l}
H=\{e,\tau,\tau\sigma^{2},\sigma^{2}\}\\
\OnSecond H=\{e,\tau\sigma,\tau\sigma^{3},\sigma^{2}\}
\end{array}\begin{array}{l}
R\colon e\mapsto1,\tau\mapsto-1,\tau\sigma^{2}\mapsto1,\sigma^{2}\mapsto-1\\
\OnSecond R\colon e\mapsto1,\tau\sigma\mapsto1,\tau\sigma^{3}\mapsto-1,\sigma^{2}\mapsto-1,
\end{array}\label{eq:SubgroupsHandHhatAndRepsRandRhat}
\end{equation}
such that $\InducedRep HGR\simeq\InducedRep{\OnSecond H}G{\OnSecond R}$.
The domains $\Square/R$ and $\Square/\OnSecond R$ in Figure~\ref{fig:SquareAndTri}
are fundamental domains for the actions of $H$ and $\OnSecond H$
on $\Square$ and their boundary conditions are determined by $R$
and $\OnSecond R$ as follows. Using a slight generalization of Proposition~\ref{prop:Natural_Sign_Conventions},
one sees that each solution of the Zaremba problem on $\Square/R$
gives rise to a solution of the Zaremba problem on $\Square$ which
transforms according to $R$ and vice versa, that is, a solution which
is horizontally symmetric ($\tau\sigma^{2}\mapsto1$) and vertically
antisymmetric ($\tau\mapsto-1$), similarly for $\Square/\OnSecond R$
and $\OnSecond R$. Using Frobenius reciprocity, one can show that
the existence of this mapping of solutions together with $\InducedRep HGR\simeq\InducedRep{\OnSecond H}G{\OnSecond R}$
implies transplantability of $\Square/R$ and $\Square/\OnSecond R$
\cite[Corollary 7.6]{BandParzanchevskiBen-Shach2009}. The isospectrality
of these domains had been established by explicit computation beforehand~\cite{LevitinParnovskiPolterovich2006}.
We complete these findings with the following characterization of
transplantability.

\begin{figure}
\noindent \begin{centering}
\hfill{}\subfloat[Square $\Square$ with axes of reflection\label{fig:TheSquareS}]{\noindent \centering{}%
\begin{minipage}[b]{70mm}%
\begin{center}
\psfrag{1}{\raisebox{-1mm}{\hspace{-3mm}$\tau\sigma^{2}\mapsto (1)$}}
\psfrag{2}{\raisebox{1mm}{\hspace{0mm}$\tau\sigma^{3}\mapsto (-1)$}}
\psfrag{3}{\raisebox{0.0mm}{\hspace{0mm}$\tau\mapsto (-1)$}}
\psfrag{4}{\raisebox{0.0mm}{\hspace{0mm}$\tau\sigma\mapsto (1)$}}
\psfrag{5}{\raisebox{-4.5mm}{\hspace{1.5mm}$1$}}
\psfrag{6}{\raisebox{0.0mm}{\hspace{1.5mm}$2$}}
\psfrag{7}{\raisebox{-0.5mm}{\hspace{1.5mm}$3$}}
\psfrag{8}{\raisebox{0.0mm}{\hspace{5mm}$4$}}\includegraphics[scale=0.55]{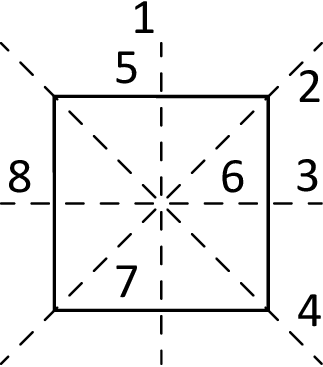}
\par\end{center}%
\end{minipage}}\hfill{}\subfloat[Isospectral quotients\label{fig:SquareAndTri}]{\noindent \centering{}%
\begin{minipage}[b]{60mm}%
\noindent \begin{center}
\raisebox{5.3mm}{\psfrag{1}{\raisebox{5mm}{\hspace{3mm}$\Square/ R$}}\includegraphics[scale=0.55]{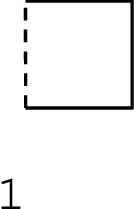}}\hspace{5mm}\raisebox{4.6mm}{\psfrag{1}{\hspace{-1mm}$\Square/\OnSecond R$}\includegraphics[scale=0.55]{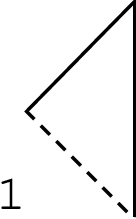}}
\par\end{center}%
\end{minipage}}\hfill{}
\par\end{centering}

\caption{The method of Band et al.~\cite{BandParzanchevskiBen-Shach2009,ParzanchevskiBand2010}.\label{fig:BandsMethod}}
\end{figure}

\begin{thm}
\label{thm:Transplantability_is_Equivalent_Inductions}Each pair of
transplantable loop-signed graphs gives rise to a triple
\begin{equation}
(G,((H_{i},R_{i}))_{i},((\OnSecond H_{j},\OnSecond R_{j}))_{j}),\label{eq:Group_Data}
\end{equation}
consisting of a finite group $G$ and two tuples of pairs of the form
$(H,R)$, where $H$ is a subgroup of $G$ and $R$ is a one-dimensional
real representation of $H$, such that 
\begin{equation}
\bigoplus_{i}\InducedRep{H_{i}}G{R_{i}}\simeq\bigoplus_{j}\InducedRep{\OnSecond H_{j}}G{\OnSecond R_{j}}.\label{eq:Sums_of_Inductions_Equivalent}
\end{equation}
Moreover, if the graphs have adjacency matrices $(\AdjacencyMatrix^{\Colour})_{\Colour=1}^{\NumberOfColours}$
and $(\OnSecond{\AdjacencyMatrix}^{\Colour})_{\Colour=1}^{\NumberOfColours}$,
respectively, then they are isomorphic to the following unions of
Schreier coset graphs defined as below 
\[
\bigcup_{i}\CayleyGraph G{(\AdjacencyMatrix^{\Colour})_{\Colour=1}^{\NumberOfColours}}/R_{i}\qquad\text{and}\qquad\bigcup_{j}\CayleyGraph G{(\AdjacencyMatrix^{\Colour})_{\Colour=1}^{\NumberOfColours}}/\OnSecond R_{j},
\]
where $\CayleyGraph G{(\AdjacencyMatrix^{\Colour})_{\Colour=1}^{\NumberOfColours}}$
denotes the Cayley graph of the group $G$ with respect to its generators
$(\AdjacencyMatrix^{\Colour})_{\Colour=1}^{\NumberOfColours}$. In
other words, the graphs can be recovered from $(G,((H_{i},R_{i}))_{i},((\OnSecond H_{j},\OnSecond R_{j}))_{j})$
up to isomorphism.
\end{thm}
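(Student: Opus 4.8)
The plan is to realise the standard $\NumberOfVertices$-dimensional representation of the group generated by the adjacency matrices as a direct sum of representations induced from one-dimensional real representations of the stabilisers of the coordinate lines, and then to transport this decomposition through the conjugation that witnesses transplantability. First, since the two loop-signed graphs are transplantable, fix a real invertible matrix $T$ with $\OnSecond{\AdjacencyMatrix}^{\Colour}=T\,\AdjacencyMatrix^{\Colour}\,T^{-1}$ for every edge colour $\Colour$ -- in particular the two graphs have the same number $\NumberOfVertices$ of vertices. Put $G=\langle\AdjacencyMatrix^{1},\ldots,\AdjacencyMatrix^{\NumberOfColours}\rangle$ and $\OnSecond G=\langle\OnSecond{\AdjacencyMatrix}^{1},\ldots,\OnSecond{\AdjacencyMatrix}^{\NumberOfColours}\rangle$; these are finite groups of $\NumberOfVertices\times\NumberOfVertices$ signed permutation matrices, and conjugation by $T$ is a group isomorphism $\IsoGandGhat\colon G\to\OnSecond G$ with $\IsoGandGhat(\AdjacencyMatrix^{\Colour})=\OnSecond{\AdjacencyMatrix}^{\Colour}$. (One could instead extract $G\cong\OnSecond G$ from the Trace Theorem, but transplantability hands us $T$ directly.)

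Next comes the key step. The group $G$ acts on $\mathbb{R}^{\NumberOfVertices}$ and thereby permutes the coordinate lines $\mathbb{R}\BasisVector_{1},\ldots,\mathbb{R}\BasisVector_{\NumberOfVertices}$. Let $O_{1},O_{2},\ldots$ be the $G$-orbits of these lines; for each $i$ pick a representative $\mathbb{R}\BasisVector_{k_{i}}\in O_{i}$, put $H_{i}=\mathrm{Stab}_{G}(\mathbb{R}\BasisVector_{k_{i}})$, and let $R_{i}\colon H_{i}\to\{\pm1\}$ be the one-dimensional real representation determined by $h\,\BasisVector_{k_{i}}=R_{i}(h)\,\BasisVector_{k_{i}}$ (this is well defined since each $h\in H_{i}$ fixes $\mathbb{R}\BasisVector_{k_{i}}$ and is a signed permutation matrix, and it is a homomorphism). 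I claim that $W_{i}:=\mathrm{span}\{\BasisVector_{k}\mid\mathbb{R}\BasisVector_{k}\in O_{i}\}$ is a $G$-submodule of $\mathbb{R}^{\NumberOfVertices}$ isomorphic to $\InducedRep{H_{i}}{G}{R_{i}}$: the map $\mathbb{R}[G]\otimes_{\mathbb{R}[H_{i}]}R_{i}\to W_{i}$ sending $g\otimes1$ to $g\,\BasisVector_{k_{i}}$ is well defined, because $gh\otimes1=g\otimes R_{i}(h)$ is sent to $R_{i}(h)\,g\,\BasisVector_{k_{i}}=gh\,\BasisVector_{k_{i}}$; it is $G$-equivariant and surjective since $O_{i}$ is a single orbit; and both sides have dimension $[G:H_{i}]=|O_{i}|$, so it is an isomorphism. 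As every $\BasisVector_{k}$ lies in exactly one $W_{i}$, we obtain $\mathbb{R}^{\NumberOfVertices}=\bigoplus_{i}W_{i}\simeq\bigoplus_{i}\InducedRep{H_{i}}{G}{R_{i}}$. The same construction applied to $\OnSecond G$ acting on $\mathbb{R}^{\NumberOfVertices}$ yields subgroups $\OnSecond H_{j}'\le\OnSecond G$, one-dimensional real representations $\OnSecond R_{j}'$, and an isomorphism $\mathbb{R}^{\NumberOfVertices}\simeq\bigoplus_{j}\InducedRep{\OnSecond H_{j}'}{\OnSecond G}{\OnSecond R_{j}'}$.

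Now transport everything to $G$. Set $\OnSecond H_{j}=\IsoGandGhat^{-1}(\OnSecond H_{j}')$ and $\OnSecond R_{j}=\OnSecond R_{j}'\circ\IsoGandGhat|_{\OnSecond H_{j}}$; since induction is compatible with the group isomorphism $\IsoGandGhat$, restricting the $\OnSecond G$-module $\InducedRep{\OnSecond H_{j}'}{\OnSecond G}{\OnSecond R_{j}'}$ along $\IsoGandGhat$ gives the $G$-module $\InducedRep{\OnSecond H_{j}}{G}{\OnSecond R_{j}}$. But that $G$-module is $\mathbb{R}^{\NumberOfVertices}$ with $g$ acting as $\IsoGandGhat(g)=T\,g\,T^{-1}$, so $T$ intertwines $\bigoplus_{j}\InducedRep{\OnSecond H_{j}}{G}{\OnSecond R_{j}}$ with the standard $G$-module $\bigoplus_{i}\InducedRep{H_{i}}{G}{R_{i}}$, which is precisely the equivalence \eqref{eq:Sums_of_Inductions_Equivalent}, with triple $(G,((H_{i},R_{i}))_{i},((\OnSecond H_{j},\OnSecond R_{j}))_{j})$ as in \eqref{eq:Group_Data}. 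For the recovery statement, observe that the loopless version of the first loop-signed graph has connected components exactly the subgraphs on the index sets $\{k\mid\mathbb{R}\BasisVector_{k}\in O_{i}\}$, because two indices are joined by a path iff some product of the permutations underlying the $\AdjacencyMatrix^{\Colour}$ carries one of the corresponding lines to the other. Under the orbit--stabiliser bijection $O_{i}\cong G/H_{i}$, the $\Colour$-coloured edge leaving a vertex $gH_{i}$ lands on $\AdjacencyMatrix^{\Colour}gH_{i}$, which equals $gH_{i}$ exactly when $g^{-1}\AdjacencyMatrix^{\Colour}g\in H_{i}$; in that case the diagonal entry $\AdjacencyMatrix^{\Colour}_{kk}\in\{\pm1\}$ prescribing the loop sign is the scalar by which $\AdjacencyMatrix^{\Colour}$ acts on that line, namely $R_{i}(g^{-1}\AdjacencyMatrix^{\Colour}g)$. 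This identifies the component over $O_{i}$ with the Schreier coset graph $\CayleyGraph{G}{(\AdjacencyMatrix^{\Colour})_{\Colour=1}^{\NumberOfColours}}/R_{i}$, so the first graph is isomorphic to $\bigcup_{i}\CayleyGraph{G}{(\AdjacencyMatrix^{\Colour})_{\Colour=1}^{\NumberOfColours}}/R_{i}$; likewise the second is isomorphic to $\bigcup_{j}\CayleyGraph{\OnSecond G}{(\OnSecond{\AdjacencyMatrix}^{\Colour})_{\Colour=1}^{\NumberOfColours}}/\OnSecond R_{j}'$, and since $\IsoGandGhat$ carries the distinguished generating tuple $(\AdjacencyMatrix^{\Colour})_{\Colour}$ to $(\OnSecond{\AdjacencyMatrix}^{\Colour})_{\Colour}$ and $\OnSecond R_{j}'$ to $\OnSecond R_{j}$, this is in turn isomorphic to $\bigcup_{j}\CayleyGraph{G}{(\AdjacencyMatrix^{\Colour})_{\Colour=1}^{\NumberOfColours}}/\OnSecond R_{j}$. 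Hence both graphs are determined, up to isomorphism, by $G$ together with its distinguished generators and the two families of subgroup--representation pairs.

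The main obstacle is the monomial decomposition in the key step above -- recognising that the signed permutation module $\mathbb{R}^{\NumberOfVertices}$ breaks up as a direct sum of representations induced from the sign characters of the line stabilisers -- together with the attendant bookkeeping in the recovery argument, in particular matching the diagonal entry $\AdjacencyMatrix^{\Colour}_{kk}$ with the value of $R_{i}$ on the conjugate $g^{-1}\AdjacencyMatrix^{\Colour}g$. Once that identification is in place, everything else is formal manipulation of induced representations together with the single conjugation $T$ furnished by transplantability.
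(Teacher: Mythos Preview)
Your proof is correct and follows essentially the same strategy as the paper: define $G$ as the group generated by the adjacency matrices, take the stabilisers $H_i$ of the coordinate lines together with their sign characters $R_i$, identify the standard $G$-module $\mathbb{R}^{\NumberOfVertices}$ with $\bigoplus_i\InducedRep{H_i}{G}{R_i}$, and transport through the conjugation $\IsoGandGhat$ induced by $T$; the recovery statement is then the Schreier coset graph identification. The paper organises the argument slightly differently, first proving the Schreier coset graph isomorphism (its Theorem~\ref{thm:TransGraphsAreSchreierGraphs}) and then showing via the induced character formula that the adjacency matrices of a Schreier coset graph realise $\InducedRep{H}{G}{R}$ (its Theorem~\ref{thm:TransSchreierGraphs}); your explicit $G$-module map $g\otimes1\mapsto g\,\BasisVector_{k_i}$ replaces that character computation, which is a little more direct for this particular statement, though the paper's formulation has the advantage of isolating a result about arbitrary bipartite pairs $(H,R)$ that it reuses for the converse.
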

In contrast to~\cite{BandParzanchevskiBen-Shach2009,ParzanchevskiBand2010},
we do not assume that the group $G$ in~(\ref{eq:Group_Data}) is
given as a group of isometries of some covering manifold. Instead,
we use the group structure of $G$ to construct such a cover and obtain
quotients which are transplantable precisely if (\ref{eq:Sums_of_Inductions_Equivalent})
holds.

In the following, let $\Graph=\bigcup_{i}\Gamma_{i}$ and $\OnSecond{\Graph}=\bigcup_{j}\OnSecond{\Gamma}_{j}$
be transplantable loop-signed graphs with edge colors $1,2,\ldots,\NumberOfColours$,
vertices $1,2,\ldots,\NumberOfVertices$ and connected components
$(\Gamma_{i})_{i}$ and $(\OnSecond{\Gamma}_{j})_{j}$, respectively.
We let $(\AdjacencyMatrix^{\Colour})_{\Colour=1}^{\NumberOfColours}$
and $(\OnSecond{\AdjacencyMatrix}^{\Colour})_{\Colour=1}^{\NumberOfColours}$
denote their $\NumberOfVertices\times\NumberOfVertices$ adjacency
matrices and consider the groups 
\[
G=\langle\AdjacencyMatrix^{1},\AdjacencyMatrix^{2},\ldots,\AdjacencyMatrix^{\NumberOfColours}\rangle\qquad\mbox{and}\qquad\OnSecond G=\langle\OnSecond{\AdjacencyMatrix}^{1},\OnSecond{\AdjacencyMatrix}^{2},\ldots,\OnSecond{\AdjacencyMatrix}^{\NumberOfColours}\rangle.
\]
Note that each $\Gamma_{i}$ gives rise to an invariant subspace of
the action of $G$ on $\mathbb{R}^{\NumberOfVertices}$, similarly
for $(\OnSecond{\Gamma}_{j})_{j}$ and $\OnSecond G$. By assumption,
there exists a transplantation matrix $T\in GL(\NumberOfVertices,\mathbb{R})$
such that
\[
\OnSecond{\AdjacencyMatrix}^{\Colour}=T\AdjacencyMatrix^{\Colour}T^{-1}\qquad\mbox{for }\Colour=1,2,\ldots,\NumberOfColours.
\]
In particular, the conjugation map $\IsoGandGhat\colon G\to\OnSecond G$
given by 
\begin{equation}
\IsoGandGhat(\AdjacencyMatrix^{\Colour_{1}}\AdjacencyMatrix^{\Colour_{2}}\cdots\AdjacencyMatrix^{\Colour_{l}})=\OnSecond{\AdjacencyMatrix}^{\Colour_{1}}\OnSecond{\AdjacencyMatrix}^{\Colour_{2}}\cdots\OnSecond{\AdjacencyMatrix}^{\Colour_{l}}\label{eq:IsoGandGhat}
\end{equation}
is an isomorphism. The main observation is that $G$ and $\OnSecond G$
act on the following set of two-sets
\[
\{\{\BasisVector_{1},-\BasisVector_{1}\},\{\BasisVector_{2},-\BasisVector_{2}\},\ldots,\{\BasisVector_{\NumberOfVertices},-\BasisVector_{\NumberOfVertices}\}\},
\]
where $\BasisVector_{i}$ denotes the $i$th standard basis vector
of $\mathbb{R}^{\NumberOfVertices}$. The point stabilizer subgroups
of these actions will turn out to encode $\Graph$ and $\OnSecond{\Graph}$,
respectively. Let $(v_{i})_{i}$ and $(\OnSecond v_{j})_{j}$ be the
respective smallest vertices in $(\Gamma_{i})_{i}$ and $(\OnSecond{\Gamma}_{j})_{j}$.
In particular, $v_{i}$ is in $\Graph_{i}$ and $\OnSecond v_{j}$
is in $\OnSecond{\Graph}_{j}$ for every $i$ and $j$.
\begin{defn}
\label{def:The_associated_subgroups_and_representations}The associated\emph{
}pairs $((H_{i},R_{i}))_{i}$ and $((\OnSecond H_{j},\OnSecond R_{j}))_{j}$
consist of subgroups $(H_{i}){}_{i}$ and $(\OnSecond H_{j}){}_{j}$
of $G$ and respective one-dimensional real representations $(R_{i})_{i}$
and $(\OnSecond R_{j})_{j}$ defined as\setlength{\arraycolsep}{1pt}
\begin{equation}
\begin{array}{cclcll}
H_{i} & = & G_{\{\BasisVector_{v_{i}},-\BasisVector_{v_{i}}\}} & = & \left\{ \ElementInG\in G\,|\,\ElementInG_{v_{i}v_{i}}=\pm1\right\}  & R_{i}(\ElementInG)=\ElementInG_{v_{i}v_{i}}\\
\OnSecond H_{j} & = & \IsoGandGhat^{-1}\left(\OnSecond G_{\{\BasisVector_{\OnSecond v_{j}},-\BasisVector_{\OnSecond v_{j}}\}}\right) & = & \left\{ \ElementInG\in G\,|\,(\IsoGandGhat(\ElementInG))_{\OnSecond v_{j}\OnSecond v_{j}}=\pm1\right\}  & \OnSecond R_{j}(\ElementInG)=(\IsoGandGhat(\ElementInG))_{\OnSecond v_{j}\OnSecond v_{j}}.
\end{array}\label{eq:AssSubGroupsAndRepsDef}
\end{equation}

\end{defn}
\setlength{\arraycolsep}{\myArraycolsep}Note that for any $v$ in
$\Graph_{i}$ with $v\neq v_{i}$, there exists $\ElementInG\in G$
with $\ElementInG_{v_{i}v}=1$ and $G_{\{\BasisVector_{v}-\BasisVector_{v}\}}=\ElementInG^{-1}H_{i}\ElementInG$.
In other words, the point stabilizer subgroups are conjugated in $G$.
The orbit-stabilizer theorem implies that the index $[G:H_{i}]$ of
$H_{i}$ in $G$ equals the number of vertices of $\Graph_{i}$. Similar
statements hold for $(\OnSecond{\Graph}_{j})_{j}$ and $(\OnSecond H_{j})_{j}$.

Definition~\ref{def:The_associated_subgroups_and_representations}
has the following geometrical motivation. Let $\Graph^{DC}$ be the
double cover of $\Graph$ obtained by taking copies $\Graph^{+}$
and $\Graph^{-}$ of the loopless version of $\Graph$ and joining
their respective $i$th vertices with a $\Colour$-colored link whenever
$\Graph$ has a $\Colour$-colored Dirichlet loop at its $i$th vertex.
Note that $G$ entails a faithful permutation action on the vertices
of $\Graph^{DC}$ and we may interpret products of adjacency matrices
of $\Graph$ as walks on $\Graph^{DC}$. According to the Reflection
Principle, each solution of a Zaremba problem corresponding to $\Graph$
gives rise to a Neumann eigenfunction on the manifold corresponding
to $\Graph^{DC}$ which is antisymmetric with respect to interchanging
$\Graph^{+}$ and $\Graph^{-}$. In other words, whenever we cross
from $\Graph^{+}$ to $\Graph^{-}$, a solution changes sign which
is incorporated in (\ref{eq:AssSubGroupsAndRepsDef}). The following
theorem is the converse of the method of Band et al.~\cite{BandParzanchevskiBen-Shach2009,ParzanchevskiBand2010}.
\begin{thm}
Let $\Graph=\bigcup_{i}\Gamma_{i}$ and $\OnSecond{\Graph}=\bigcup_{j}\OnSecond{\Gamma}_{j}$
be as above.\label{thm:TransGraphsGiveEquivInds}\end{thm}
\begin{enumerate}
\item The representation $id_{G}$ is equivalent to $\bigoplus_{i}\InducedRep{H_{i}}G{R_{i}}$.

\begin{enumerate}
\item The representation $id_{\OnSecond G}\circ\IsoGandGhat$ given by $\AdjacencyMatrix^{\Colour_{1}}\AdjacencyMatrix^{\Colour_{2}}\cdots\AdjacencyMatrix^{\Colour_{l}}\mapsto\OnSecond{\AdjacencyMatrix}^{\Colour_{1}}\OnSecond{\AdjacencyMatrix}^{\Colour_{2}}\cdots\OnSecond{\AdjacencyMatrix}^{\Colour_{l}}$
is equivalent to $\bigoplus_{j}\InducedRep{\OnSecond H_{j}}G{\OnSecond R_{j}}$.
\item We have 
\[
\bigoplus_{i}\InducedRep{H_{i}}G{R_{i}}\simeq\bigoplus_{j}\InducedRep{\OnSecond H_{j}}G{\OnSecond R_{j}}.
\]
\label{enu:InductionsEquiv}
\end{enumerate}
\end{enumerate}
Theorem~\ref{thm:TransGraphsGiveEquivInds} follows from Theorem~\ref{thm:TransGraphsAreSchreierGraphs}
and Theorem~\ref{thm:TransSchreierGraphs} below. In order to reconstruct
$\Graph$ and $\OnSecond{\Graph}$ from the data in~(\ref{eq:AssSubGroupsAndRepsDef}),
we consider the action of $G$ on its Cayley graph.
\begin{defn}
Let $G$ be a finite group generated by elements $(\GeneratorsOfG^{\Colour})_{\Colour=1}^{\NumberOfColours}$
of order $2$. The Cayley\emph{ }graph $\CayleyGraph G{(\GeneratorsOfG^{\Colour})_{\Colour=1}^{\NumberOfColours}}$
is the isomorphism class of edge-colored graphs with edge colors $1,2,\ldots,\NumberOfColours$
and $|G|$ vertices, which are identified with the elements of $G$.
Moreover, the vertices $\ElementInG,\ElementInG'\in G$ are joined
by a $\Colour$-colored edge if and only if $\ElementInG'=\ElementInG\GeneratorsOfG^{\Colour}$.\label{def:CayleyGraph}
\end{defn}
As an example, Figure~\ref{fig:CayleyGraph} shows $\CayleyGraph{D_{4}}{(\tau,\tau\sigma^{3})}$.
The striking similarity with the square~$\Square$, on which $D_{4}$
acts by isometries, originates from the action of $D_{4}$ on $\CayleyGraph{D_{4}}{(\tau,\tau\sigma^{3})}$.
More precisely, each $\ElementInG\in D_{4}$ entails an isomorphism
of $\CayleyGraph{D_{4}}{(\tau,\tau\sigma^{3})}$ by mapping $h\in D_{4}$
to $\ElementInG h$. Note that, in general, two vertices $\ElementInG h_{1},\ElementInG h_{2}\in\CayleyGraph G{(\GeneratorsOfG^{\Colour})_{\Colour=1}^{\NumberOfColours}}$
are joined by a $\Colour$-colored edge if and only if the same holds
for $h_{1},h_{2}\in\CayleyGraph G{(\GeneratorsOfG^{\Colour})_{\Colour=1}^{\NumberOfColours}}$
since $\ElementInG h_{1}=\ElementInG h_{2}\GeneratorsOfG^{\Colour}$
if and only if $h_{1}=h_{2}\GeneratorsOfG^{\Colour}$. In Figure~\ref{fig:CayleyGraph},
the actions of $\tau$ and $\tau\sigma^{2}$ on $\CayleyGraph{D_{4}}{(\tau,\tau\sigma^{3})}$
correspond to their former actions on $\Square$, that is, vertical
and horizontal reflection. The pairs $(H,R)$ and $(\OnSecond H,\OnSecond R)$
in~(\ref{eq:SubgroupsHandHhatAndRepsRandRhat}) give rise to quotients
of $\CayleyGraph{D_{4}}{(\tau,\tau\sigma^{3})}$ defined as follows.
\begin{defn}
\label{def:SchreierCosetGraph}Let $G$ be a finite group generated
by elements $(\GeneratorsOfG^{\Colour})_{\Colour=1}^{\NumberOfColours}$
of order $2$ and let $H$ be a subgroup of $G$ with one-dimensional
real representation $R$. The Schreier\emph{ }coset\emph{ }graph\emph{
}$\CayleyGraph G{(\GeneratorsOfG^{\Colour})_{\Colour=1}^{\NumberOfColours}}/R$
is the isomorphism class of loop-signed graphs with edge colors $1,2,\ldots,\NumberOfColours$
and $[G:H]$ vertices, which are identified with the right cosets
of $H$. Moreover, the vertices $H\ElementInG$ and $H\ElementInG'$
are joined by a $\Colour$-colored edge if and only if $H\ElementInG'=H\ElementInG\GeneratorsOfG^{\Colour}$,
and $H\ElementInG$ has a $\Colour$-colored loop carrying a Neumann
or Dirichlet sign, respectively, if and only if $H\ElementInG=H\ElementInG\GeneratorsOfG^{\Colour}$
and $R(\ElementInG\GeneratorsOfG^{\Colour}\ElementInG^{-1})=\pm1$.
\end{defn}
Although we defined Cayley graphs and Schreier coset graphs only up
to isomorphism, we use the notation $\CayleyGraph G{(\GeneratorsOfG^{\Colour})_{\Colour=1}^{\NumberOfColours}}/R$
to mean any of its isomorphic elements. Figure~\ref{fig:TransQuotients}
shows $\CayleyGraph{D_{4}}{(\tau,\tau\sigma^{3})}/R$ and $\CayleyGraph{D_{4}}{(\tau,\tau\sigma^{3})}/\OnSecond R$,
where $R$ and $\OnSecond R$ are given by (\ref{eq:SubgroupsHandHhatAndRepsRandRhat}).
These graphs underlie the transplantable broken square and triangle
in Figure~\ref{fig:SquareAndTri} which can be explained as follows.

\begin{figure}
\noindent \begin{centering}
\hfill{}\subfloat[Cayley graph $\CayleyGraph{D_{4}}{(\tau,\tau\sigma^{3})}$\label{fig:CayleyGraph}]{\noindent \centering{}%
\begin{minipage}[b]{65mm}%
\begin{center}
\psfrag{1}{\raisebox{0.0mm}{\hspace{3mm}$\sigma$}} \psfrag{2}{\raisebox{0.0mm}{\hspace{1mm}$\tau\sigma^{3}$}} \psfrag{3}{\raisebox{0.0mm}{\hspace{2mm}$e$}}
\psfrag{4}{\raisebox{0.0mm}{\hspace{2mm}$\tau$}}
\psfrag{5}{\raisebox{0.5mm}{\hspace{2mm}$\sigma^{3}$}}
\psfrag{6}{\raisebox{0.5mm}{$\tau\sigma$}}
\psfrag{7}{\raisebox{0.0mm}{$\sigma^{2}$}}
\psfrag{8}{\raisebox{0.0mm}{$\tau\sigma^{2}$}}\includegraphics[scale=0.42]{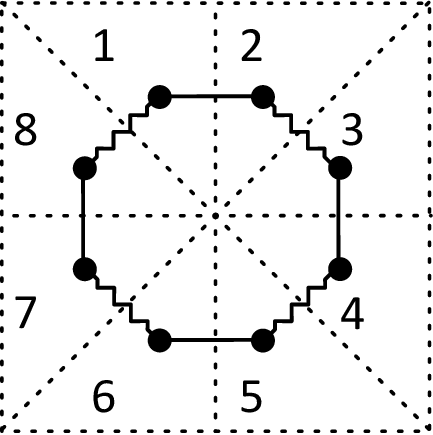}
\par\end{center}%
\end{minipage}}\hfill{}\subfloat[Transplantable quotients\label{fig:TransQuotients}]{\noindent \centering{}%
\begin{minipage}[b]{70mm}%
\hspace{15mm}\psfragBodyNew\raisebox{1.2mm}{\psfrag{5}{\raisebox{8mm}{\hspace{-13mm}$\CayleyGraph{D_{4}}{(\tau,\tau\sigma^{3})}/R$}} \psfrag{1}{$1$} \psfrag{2}{$2$}  \psfrag{3}{}	\includegraphics[scale=0.42]{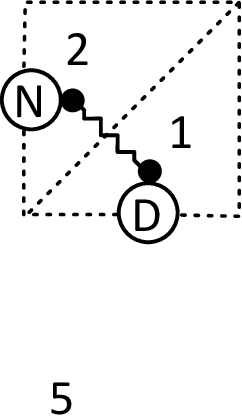}}\hspace{10mm}
\raisebox{0.3mm}{\psfrag{5}{\raisebox{-0mm}{\hspace{-24mm}$\CayleyGraph{D_{4}}{(\tau,\tau\sigma^{3})}/\OnSecond R$}} \psfrag{1}{$1$} \psfrag{2}{$2$}\includegraphics[scale=0.42]{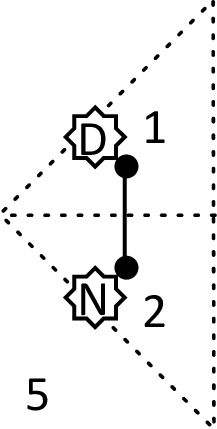}}%
\end{minipage}}\hfill{}
\par\end{centering}

\caption{Schreier coset graphs corresponding to (\ref{eq:SubgroupsHandHhatAndRepsRandRhat}).\label{fig:TransGraphFromCayleyCover}}
\end{figure}

\begin{table}
\noindent \centering{}%
\begin{tabular}{c|c|c|c|c}
$\AdjacencyMatrix^{straight}$ & $\AdjacencyMatrix^{zig-zag}$ & $T=T^{T}=T^{-1}$ & $\OnSecond{\AdjacencyMatrix}^{straight}$ & $\OnSecond{\AdjacencyMatrix}^{zig-zag}$\tabularnewline
\hline 
\noalign{\vskip-2mm}
 &  &  &  & \tabularnewline
$\left(\begin{array}{cc}
-1 & 0\\
0 & 1
\end{array}\right)$ & $\left(\begin{array}{cc}
0 & 1\\
1 & 0
\end{array}\right)$ & $\frac{1}{\sqrt{2}}\left(\begin{array}{cc}
-1 & 1\\
1 & 1
\end{array}\right)$ & $\left(\begin{array}{cc}
0 & 1\\
1 & 0
\end{array}\right)$ & $\left(\begin{array}{cc}
-1 & 0\\
0 & 1
\end{array}\right)$\tabularnewline[4mm]
$(1,3)(2)(4)$ & $(1,2)(3,4)$ &  & $(1,2)(3,4)$ & $(1,3)(2)(4)$\tabularnewline[1mm]
$\tau$ & $\tau\sigma^{3}$ &  & $\IsoGandGhat(\tau)$ & $\IsoGandGhat(\tau\sigma^{3})$\tabularnewline
\noalign{\vskip2mm}
\end{tabular}\caption{Adjacency matrices of the graphs in Figure~\ref{fig:TransQuotients}
and their action on $\{\BasisVector_{1},\BasisVector_{2},-\BasisVector_{1},-\BasisVector_{2}\}$.\label{tab:AdjMatPlusPermImage}}
\end{table}

\begin{thm}
\label{thm:TransGraphsAreSchreierGraphs}Let $\Graph=\bigcup_{i}\Gamma_{i}$
and $\OnSecond{\Graph}=\bigcup_{j}\OnSecond{\Gamma}_{j}$ be transplantable
loop-signed graphs with adjacency matrices $(\AdjacencyMatrix^{\Colour})_{\Colour=1}^{\NumberOfColours}$
and $(\OnSecond{\AdjacencyMatrix}^{\Colour})_{\Colour=1}^{\NumberOfColours}$
generating $G$ and $\OnSecond G$, respectively. If $((H_{i},R_{i}))_{i}$
and $((\OnSecond H_{j},\OnSecond R_{j}))_{j}$ denote the associated
pairs given by~(\ref{eq:AssSubGroupsAndRepsDef}), then the graphs
$\bigcup_{i}\CayleyGraph G{(\AdjacencyMatrix^{\Colour})_{\Colour=1}^{\NumberOfColours}}/R_{i}$
and $\bigcup_{j}\CayleyGraph G{(\AdjacencyMatrix^{\Colour})_{\Colour=1}^{\NumberOfColours}}/\OnSecond R_{j}$
are isomorphic to $\Graph$ and $\OnSecond{\Graph}$, respectively.
More precisely, for each $i$ and $j$ 
\begin{equation}
\Gamma_{i}\simeq\CayleyGraph G{(\AdjacencyMatrix^{\Colour})_{\Colour=1}^{\NumberOfColours}}/R_{i}\quad\text{and}\quad\OnSecond{\Gamma}_{j}\simeq\CayleyGraph G{(\AdjacencyMatrix^{\Colour})_{\Colour=1}^{\NumberOfColours}}/\OnSecond R_{j}.\label{eq:Graph_Components_are_Schreier_Graphs}
\end{equation}
\end{thm}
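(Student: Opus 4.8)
The plan is to show that the bijection between the vertices of $\Gamma_i$ and the right cosets of $H_i$ in $G$, induced by the orbit-stabiliser theorem, is in fact a colour-preserving graph isomorphism that also matches up the loop signs; the argument for $\OnSecond\Gamma_j$ and $\OnSecond H_j$ is formally identical after transporting everything through $\IsoGandGhat$. First I would fix a component $\Gamma_i$ with smallest vertex $v_i$, and recall from the discussion preceding the theorem that $G$ acts transitively on the set $\{\{\BasisVector_v,-\BasisVector_v\}\mid v\in\Gamma_i\}$ (transitivity because the loopless version of $\Gamma_i$ is connected, so some word in the $\AdjacencyMatrix^{\Colour}$ carries $\BasisVector_{v_i}$ to $\pm\BasisVector_v$). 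Hence the map $H_i g \mapsto \{\BasisVector_v,-\BasisVector_v\}$, where $v$ is the unique vertex with $g_{v_i v}=\pm1$, is a well-defined bijection from the right cosets $H_i\backslash G$ onto the vertices of $\Gamma_i$; call it $\Theta_i$.

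Next I would verify that $\Theta_i$ is colour-preserving. By Definition~\ref{def:SchreierCosetGraph}, the vertices $H_i g$ and $H_i g'$ are joined by a $\Colour$-coloured edge in $\CayleyGraph G{(\AdjacencyMatrix^{\Colour})_{\Colour=1}^{\NumberOfColours}}/R_i$ exactly when $H_i g' = H_i g\,\AdjacencyMatrix^{\Colour}$ with $H_i g'\neq H_i g$. If $\Theta_i(H_i g)=v$ and $\Theta_i(H_i g')=v'$, then reading off the matrix entries, $g'_{v_i v'}=\pm1$ and $(g\AdjacencyMatrix^{\Colour})_{v_i v'}=\sum_w g_{v_i w}\AdjacencyMatrix^{\Colour}_{w v'}=\pm\AdjacencyMatrix^{\Colour}_{v v'}$, so $H_i g'=H_i g\AdjacencyMatrix^{\Colour}$ forces $\AdjacencyMatrix^{\Colour}_{v v'}=\pm1$; since $\AdjacencyMatrix^{\Colour}$ is a signed permutation matrix with non-negative off-diagonal entries and $v\neq v'$ (as the cosets differ), this means $\AdjacencyMatrix^{\Colour}_{v v'}=1$, i.e. $v$ and $v'$ are joined by a $\Colour$-edge in $\Gamma_i$. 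Conversely a $\Colour$-edge $v\!-\!v'$ of $\Gamma_i$ gives $\AdjacencyMatrix^{\Colour}_{vv'}=1$, hence $g\AdjacencyMatrix^{\Colour}$ sends $\BasisVector_{v_i}$ to $\pm\BasisVector_{v'}$, so $\Theta_i(H_i g\AdjacencyMatrix^{\Colour})=v'$ and $H_ig\AdjacencyMatrix^{\Colour}\neq H_ig$. This also handles the loops: $\Gamma_i$ has a $\Colour$-loop at $v$ iff $\AdjacencyMatrix^{\Colour}_{vv}=\pm1$, iff $H_i g\AdjacencyMatrix^{\Colour}=H_i g$ (since then $g\AdjacencyMatrix^{\Colour}g^{-1}$ fixes $\{\BasisVector_{v_i},-\BasisVector_{v_i}\}$, so lies in $H_i$), and the sign is $\Colour$-loop-Dirichlet iff $\AdjacencyMatrix^{\Colour}_{vv}=-1$ iff $R_i(g\AdjacencyMatrix^{\Colour}g^{-1})=(g\AdjacencyMatrix^{\Colour}g^{-1})_{v_iv_i}=-1$, matching Definition~\ref{def:SchreierCosetGraph} via $R_i(g\AdjacencyMatrix^{\Colour}g^{-1})=R_i(g)R_i(\AdjacencyMatrix^{\Colour})R_i(g)^{-1}=\AdjacencyMatrix^{\Colour}_{vv}$ because $R_i$ is one-dimensional and $R_i(g)=g_{v_iv}=\pm1$ with $R_i(g)^2=1$.

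For the hatted statement I would run the same argument with $\OnSecond G$, $\OnSecond\Gamma_j$, $\OnSecond v_j$ and the point stabiliser $\OnSecond G_{\{\BasisVector_{\OnSecond v_j},-\BasisVector_{\OnSecond v_j}\}}$, obtaining $\OnSecond\Gamma_j\simeq\CayleyGraph{\OnSecond G}{(\OnSecond\AdjacencyMatrix^{\Colour})_{\Colour=1}^{\NumberOfColours}}/\OnSecond R_j'$ where $\OnSecond R_j'$ is the restriction of the $(\OnSecond v_j,\OnSecond v_j)$-entry functional; then pull back along the isomorphism $\IsoGandGhat:G\to\OnSecond G$ of~\eqref{eq:IsoGandGhat}, which carries $\AdjacencyMatrix^{\Colour}$ to $\OnSecond\AdjacencyMatrix^{\Colour}$ and hence identifies the Cayley graph of $G$ on the generators $(\AdjacencyMatrix^{\Colour})$ with that of $\OnSecond G$ on $(\OnSecond\AdjacencyMatrix^{\Colour})$, and by definition~\eqref{eq:AssSubGroupsAndRepsDef} identifies $(\OnSecond H_j,\OnSecond R_j)$ with $(\IsoGandGhat^{-1}(\OnSecond G_{\{\ldots\}}),\OnSecond R_j'\circ\IsoGandGhat)$; this yields $\OnSecond\Gamma_j\simeq\CayleyGraph G{(\AdjacencyMatrix^{\Colour})_{\Colour=1}^{\NumberOfColours}}/\OnSecond R_j$ as claimed. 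Taking the disjoint union over $i$ and over $j$ gives the first assertion. The main obstacle I anticipate is bookkeeping rather than conceptual: one must be careful that $\Theta_i$ is genuinely well-defined (two group elements in the same coset send $\BasisVector_{v_i}$ to the same two-set, which is exactly the definition of $H_i$ as the point stabiliser), that the non-negativity and permutation-matrix structure of the adjacency matrices is used to pin the edge weight to $+1$ rather than $-1$, and that the one-dimensionality of $R_i$ is invoked correctly so that conjugation of representations collapses to multiplication of $\pm1$ scalars; none of these is deep, but each must be stated cleanly.
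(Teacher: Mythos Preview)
Your proposal is correct and follows essentially the same route as the paper: set up the orbit--stabiliser bijection between vertices of $\Gamma_i$ and right cosets of $H_i$, then verify directly from the signed-permutation structure of the $\AdjacencyMatrix^{\Colour}$ that this bijection respects $\Colour$-coloured edges and loop signs, and finally transport the argument for $\OnSecond\Gamma_j$ through the isomorphism $\IsoGandGhat$. One small warning: in your loop-sign verification the expression $R_i(g)R_i(\AdjacencyMatrix^{\Colour})R_i(g)^{-1}$ is ill-formed since $R_i$ is only defined on $H_i$ (and generally $g,\AdjacencyMatrix^{\Colour}\notin H_i$); the identity $(g\AdjacencyMatrix^{\Colour}g^{-1})_{v_iv_i}=\AdjacencyMatrix^{\Colour}_{vv}$ you need follows instead from the direct matrix computation $g\,\BasisVector_v=\pm\BasisVector_{v_i}$, $\AdjacencyMatrix^{\Colour}\BasisVector_v=\AdjacencyMatrix^{\Colour}_{vv}\BasisVector_v$, hence $g\AdjacencyMatrix^{\Colour}g^{-1}\BasisVector_{v_i}=\AdjacencyMatrix^{\Colour}_{vv}\BasisVector_{v_i}$.
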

\begin{proof}
It suffices to prove~(\ref{eq:Graph_Components_are_Schreier_Graphs}).
In order to simplify notation, we fix some $\Graph_{i}$ and assume
it has vertices $\{1,2,\ldots,\NumberOfVertices_{i}\}$, in particular,
$v_{i}=1$ and $H_{i}=\{\ElementInG\in G\,|\,\ElementInG_{11}=\pm1\}$.
The orbit-stabilizer theorem implies that $[G:H_{i}]=\NumberOfVertices_{i}$
and for each $k\in\{1,2,\ldots,\NumberOfVertices_{i}\}$ there exists
$\ElementInG_{k}\in G$ such that $\ElementInG_{k}\BasisVector_{k}=\BasisVector_{1}$.
The elements $(\ElementInG_{k})_{k=1}^{\NumberOfVertices_{i}}$ yield
a system of representatives for the right cosets of $H_{i}$ since
$H_{i}\ElementInG_{k}\cap H_{i}\ElementInG_{l}\neq\varnothing$ implies
$\ElementInG_{k}\ElementInG_{l}^{-1}\in H_{i}=\left\{ g\in G\,|\, g\BasisVector_{1}=\pm\BasisVector_{1}\right\} $,
that is, $\ElementInG_{l}^{-1}\BasisVector_{1}=\pm\ElementInG_{k}^{-1}\BasisVector_{1}$,
which happens only if $k=l$. By definition, the vertices $H_{i}\ElementInG_{k}$
and $H_{i}\ElementInG_{l}$ of $\CayleyGraph G{(\AdjacencyMatrix^{\Colour})_{\Colour=1}^{\NumberOfColours}}/R_{i}$
are joined by a $\Colour$-colored edge if and only if $H_{i}\ElementInG_{k}\AdjacencyMatrix^{\Colour}=H_{i}\ElementInG_{l}$,
which happens precisely if $\ElementInG_{k}\AdjacencyMatrix^{\Colour}\ElementInG_{l}^{-1}\BasisVector_{1}=\ElementInG_{k}\AdjacencyMatrix^{\Colour}\BasisVector_{l}=\pm\BasisVector_{1}$,
that is, if $\AdjacencyMatrix_{kl}^{\Colour}=\pm1$. Moreover, the
vertex $H_{i}\ElementInG_{k}$ has a $\Colour$-colored Neumann or
Dirichlet loop if and only if $\ElementInG_{k}\AdjacencyMatrix^{\Colour}\ElementInG_{k}^{-1}\in H_{i}$
and $R_{i}(\ElementInG_{k}\AdjacencyMatrix^{\Colour}\ElementInG_{k}^{-1})=\pm1$,
which is equivalent to $\ElementInG_{k}\AdjacencyMatrix^{\Colour}\ElementInG_{k}^{-1}\BasisVector_{1}=\ElementInG_{k}\AdjacencyMatrix^{\Colour}\BasisVector_{k}=\pm\BasisVector_{1}$,
that is, $\AdjacencyMatrix_{kk}^{\Colour}=\pm1$. Hence, the first
$\NumberOfVertices_{i}\times\NumberOfVertices_{i}$ block of $\AdjacencyMatrix^{\Colour}$
coincides with the adjacency matrix corresponding to the edge color
$\Colour$ of $\CayleyGraph G{(\AdjacencyMatrix^{\Colour})_{\Colour=1}^{\NumberOfColours}}/R_{i}$
up to a renumbering of its vertices. The statement $\OnSecond{\Gamma}_{j}\simeq\CayleyGraph G{(\AdjacencyMatrix^{\Colour})_{\Colour=1}^{\NumberOfColours}}/\OnSecond R_{j}$
is proven along the same lines using the isomorphism~(\ref{eq:IsoGandGhat}).
\end{proof}
As an example, start with the transplantable graphs in Figure~\ref{fig:TransQuotients}.
Table~\ref{tab:AdjMatPlusPermImage} lists their adjacency matrices
as well as the associated permutations of $\{\BasisVector_{1},\BasisVector_{2},-\BasisVector_{1},-\BasisVector_{2}\}$.
For instance, $\AdjacencyMatrix^{straight}$ interchanges $\BasisVector_{1}$
and $-\BasisVector_{1}$ and it maps $\BasisVector_{2}$ and $-\BasisVector_{2}$
to themselves, hence, it gives rise to $(1,3)(2)(4)$. If we label
the midpoints of the sides of $\Square$ as indicated in Figure~\ref{fig:TheSquareS},
then $(1,3)(2)(4)$ can be identified with the reflection $\tau$
of $\Square$ as noted in the last line of Table~\ref{tab:AdjMatPlusPermImage}.
We see that the group generated by $\AdjacencyMatrix^{straight}$
and $\AdjacencyMatrix^{zig-zag}$ is $D_{4}$. Moreover, the associated
pairs $(H_{1},R_{1})$ and $(\OnSecond H_{1},\OnSecond R_{1})$ given
by~(\ref{eq:AssSubGroupsAndRepsDef}) coincide with $(H,R)$ and
$(\OnSecond H,\OnSecond R)$ in~(\ref{eq:SubgroupsHandHhatAndRepsRandRhat}).
In accordance with Theorem~\ref{thm:TransGraphsAreSchreierGraphs},
$\CayleyGraph{D_{4}}{(\tau,\tau\sigma^{3})}/R$ and $\CayleyGraph{D_{4}}{(\tau,\tau\sigma^{3})}/\OnSecond R$
are the graphs we started with.

As we are solely interested in the spectrum of the Laplace operator
on the trivial bundle, we make the following additional assumption.
\begin{defn}
Let $G$ be a finite group generated by elements $(\GeneratorsOfG^{\Colour})_{\Colour=1}^{\NumberOfColours}$
of order $2$. A pair $(H,R)$ consisting of a subgroup $H$ of $G$
and a one-dimensional real representation $R$ of $H$ is called bipartite\emph{
}with respect to $(\GeneratorsOfG^{\Colour})_{\Colour=1}^{\NumberOfColours}$
if the right cosets of $H$ have a system of representatives $(\ElementInG_{i})_{i=1}^{[G:H]}$
such that whenever $i\neq j$ and $H\ElementInG_{i}\GeneratorsOfG^{\Colour}=H\ElementInG_{j}$,
we have $R(\ElementInG_{i}\GeneratorsOfG^{\Colour}\ElementInG_{j}^{-1})=1$.
\end{defn}
Note that the associated pairs in Definition~\ref{def:The_associated_subgroups_and_representations}
are bipartite with respect to the adjacency matrices generating $G$
precisely because off-diagonal entries of adjacency matrices are non-negative.
The following theorem implies Theorem~\ref{thm:Transplantability_is_Equivalent_Inductions}
and reveals the group-theoretic nature of transplantability.
\begin{thm}
\label{thm:TransSchreierGraphs}Let $G$ be a finite group generated
by elements $(\GeneratorsOfG^{\Colour})_{\Colour=1}^{\NumberOfColours}$
of order $2$ and let $(H,R)$ be a pair that is bipartite with respect
to $(\GeneratorsOfG^{\Colour})_{\Colour=1}^{\NumberOfColours}$. Then,
the adjacency matrices of $\CayleyGraph G{(\GeneratorsOfG^{\Colour})_{\Colour=1}^{\NumberOfColours}}/R$
yield a representation of $G$ which is equivalent to $\InducedRep HGR$.
In particular, if $((H_{i},R_{i}))_{i}$ and $((\OnSecond H_{j},\OnSecond R_{j}))_{j}$
are tuples of pairs that are bipartite with respect to $(\GeneratorsOfG^{\Colour})_{\Colour=1}^{\NumberOfColours}$,
then the loop-signed graphs $\bigcup_{i}\CayleyGraph G{(\GeneratorsOfG^{\Colour})_{\Colour=1}^{\NumberOfColours}}/R_{i}$
and $\bigcup_{j}\CayleyGraph G{(\GeneratorsOfG^{\Colour})_{\Colour=1}^{\NumberOfColours}}/\OnSecond R_{j}$
are transplantable if and only if $\bigoplus_{i}\InducedRep{H_{i}}G{R_{i}}\simeq\bigoplus_{j}\InducedRep{\OnSecond H_{j}}G{\OnSecond R_{j}}$.\end{thm}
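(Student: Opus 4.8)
The plan is to split the proof into a local claim about a single bipartite pair and a global argument that reduces transplantability to equivalence of representations. The local claim is that, for a bipartite pair $(H,R)$, the adjacency matrices of $\CayleyGraph G{(\GeneratorsOfG^{\Colour})_{\Colour=1}^{\NumberOfColours}}/R$ are, in a suitable basis, exactly the matrices by which $\GeneratorsOfG^{1},\ldots,\GeneratorsOfG^{\NumberOfColours}$ act on $\InducedRep HGR$; the global argument then assembles these blockwise and reuses the real-versus-complex trick from the proof of the Trace Theorem.

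For the local claim I would model $\InducedRep HGR$ as the space of functions $f\colon G\to\mathbb{C}$ satisfying $f(h\ElementInG)=R(h)\,f(\ElementInG)$ for all $h\in H$, on which $G$ acts by $(\GeneratorsOfG\cdot f)(x)=f(x\GeneratorsOfG)$. Since $(H,R)$ is bipartite, I fix a system of right-coset representatives $(\ElementInG_i)_{i=1}^{[G:H]}$ witnessing bipartiteness, and let $f_i$ be the function supported on $H\ElementInG_i$ with $f_i(\ElementInG_i)=1$; the $f_i$ form a basis. A one-line computation gives $\GeneratorsOfG^{\Colour}\cdot f_j=R\bigl(\ElementInG_i\,\GeneratorsOfG^{\Colour}\,\ElementInG_j^{-1}\bigr)\,f_i$, where $i$ is the index determined by $H\ElementInG_i\GeneratorsOfG^{\Colour}=H\ElementInG_j$ and where $\ElementInG_i\,\GeneratorsOfG^{\Colour}\,\ElementInG_j^{-1}\in H$. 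When $i\neq j$ this scalar equals $1$ by the defining property of a bipartite pair --- this is exactly where the hypothesis is used --- and when $i=j$ it equals $R(\ElementInG_i\,\GeneratorsOfG^{\Colour}\,\ElementInG_i^{-1})$, which, $R$ being a one-dimensional real representation, lies in $\{+1,-1\}$ and is by definition the sign of the $\Colour$-coloured loop at the vertex $H\ElementInG_i$. Comparing with Definitions~\ref{def:AdjacencyMatrices} and~\ref{def:SchreierCosetGraph}, the matrix of $\GeneratorsOfG^{\Colour}$ in the basis $(f_i)_i$ is precisely the adjacency matrix $\AdjacencyMatrix^{\Colour}$ of $\CayleyGraph G{(\GeneratorsOfG^{\Colour})_{\Colour=1}^{\NumberOfColours}}/R$ for the labelling of the vertices by $(H\ElementInG_i)_i$. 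Hence $\GeneratorsOfG^{\Colour}\mapsto\AdjacencyMatrix^{\Colour}$ extends to a representation of $G$ --- it \emph{is} $\InducedRep HGR$ written in this basis --- and any other admissible labelling merely conjugates the $\AdjacencyMatrix^{\Colour}$ by a common permutation matrix, so the equivalence class does not depend on the choices.

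For the global claim, the adjacency matrices of the disjoint union $\bigcup_i\CayleyGraph G{(\GeneratorsOfG^{\Colour})_{\Colour=1}^{\NumberOfColours}}/R_i$ are the block sums $\bigoplus_i\AdjacencyMatrix_i^{\Colour}$, so by the local claim the assignment $\GeneratorsOfG^{\Colour}\mapsto\bigoplus_i\AdjacencyMatrix_i^{\Colour}$ realises $\bigoplus_i\InducedRep{H_i}G{R_i}$, and likewise $\GeneratorsOfG^{\Colour}\mapsto\bigoplus_j\OnSecond{\AdjacencyMatrix}_j^{\Colour}$ realises $\bigoplus_j\InducedRep{\OnSecond H_j}G{\OnSecond R_j}$. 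By definition the two unions are transplantable precisely when some real invertible $T$ satisfies $\bigoplus_j\OnSecond{\AdjacencyMatrix}_j^{\Colour}=T\bigl(\bigoplus_i\AdjacencyMatrix_i^{\Colour}\bigr)T^{-1}$ for all $\Colour$; since the $\GeneratorsOfG^{\Colour}$ generate $G$, the existence of such a $T$ is equivalent to the existence of a real $G$-equivariant isomorphism between the two representations. A complex such isomorphism exists iff the two complex representations are equivalent, that is, iff $\bigoplus_i\InducedRep{H_i}G{R_i}\simeq\bigoplus_j\InducedRep{\OnSecond H_j}G{\OnSecond R_j}$, and a complex intertwiner can always be turned into a real one exactly as in the proof of the Trace Theorem: writing it as $T_{1}+iT_{2}$ with $T_{1},T_{2}$ real and noting that the adjacency matrices are real, both $T_{1}$ and $T_{2}$ are themselves intertwiners, the polynomial $z\mapsto\det(T_{1}+z\,T_{2})$ is nonzero since it does not vanish at $z=i$, and any real $r$ with $\det(T_{1}+r\,T_{2})\neq0$ gives the desired real transplantation matrix. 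This proves the asserted equivalence, and together with Theorem~\ref{thm:TransGraphsAreSchreierGraphs} it also establishes Theorem~\ref{thm:Transplantability_is_Equivalent_Inductions}.

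The step I expect to be the crux is the coefficient computation in the second paragraph, where the whole result is concentrated: it amounts to reconciling the sign bookkeeping of $\InducedRep HGR$ with the two sign conventions built into loop-signed graphs --- the non-negativity of off-diagonal adjacency entries, which is exactly what ``bipartite'' provides, and the identification of $R(\ElementInG_i\,\GeneratorsOfG^{\Colour}\,\ElementInG_i^{-1})$ with the Dirichlet/Neumann loop sign of Definition~\ref{def:AdjacencyMatrices}. This is elementary but calls for care with the left/right-coset and right-translation conventions; everything else is a routine transcription of arguments already employed for the Trace Theorem.
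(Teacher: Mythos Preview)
Your proof is correct, and the global part (block sums, transplantability as equivalence of the two representations on generators hence on all of $G$, and the $\det(T_1+zT_2)$ trick to pass from complex to real intertwiners) matches the paper's reasoning; the paper simply records this as ``It suffices to prove the first statement'' and relies on the same Trace-Theorem argument you invoke.

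The local part, however, follows a genuinely different route. The paper does \emph{not} work with a concrete model of $\InducedRep HGR$. Instead it writes the adjacency entries as $\AdjacencyMatrix_{ij}^{\Colour}=R(\ElementInG_i\,\GeneratorsOfG^{\Colour}\,\ElementInG_j^{-1})$ when $H\ElementInG_i\,\GeneratorsOfG^{\Colour}=H\ElementInG_j$, proves by induction on word length that the assignment $\GeneratorsOfG^{\Colour_l}\cdots\GeneratorsOfG^{\Colour_1}\mapsto\AdjacencyMatrix^{\Colour_l}\cdots\AdjacencyMatrix^{\Colour_1}$ is a well-defined homomorphism (by establishing the analogous closed formula for arbitrary products), and then verifies equivalence with $\InducedRep HGR$ by matching characters via the Frobenius induced-character formula. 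Your approach short-circuits both the induction and the character comparison: by realising $\InducedRep HGR$ as $H$-equivariant functions with right translation and computing $\GeneratorsOfG^{\Colour}$ in the basis $(f_i)$, you see directly that the matrix of each generator \emph{is} the adjacency matrix, so well-definedness and equivalence come for free. This is more conceptual and shorter; the paper's version, in exchange, is self-contained (no model of induction needed) and yields the explicit entrywise formula $\RepresentationOfG(p)_{ij}=R(\ElementInG_i\,p\,\ElementInG_j^{-1})$ for every $p\in G$, which you obtain only implicitly. Both arguments use the bipartite hypothesis at exactly the same point, to force off-diagonal entries to be $+1$.
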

\begin{proof}
It suffices to prove the first statement. Let $\NumberOfVertices=[G:H]$
and choose a system of representatives $(\ElementInG_{i})_{i=1}^{\NumberOfVertices}$
for the right cosets of $H$ such that whenever $i\neq j$ and $H\ElementInG_{i}\GeneratorsOfG^{\Colour}=H\ElementInG_{j}$,
then $R(\ElementInG_{i}\GeneratorsOfG^{\Colour}\ElementInG_{j}^{-1})=1$.
If we number the vertices of $\CayleyGraph G{(\GeneratorsOfG^{\Colour})_{\Colour=1}^{\NumberOfColours}}/R$
according to $(\ElementInG_{i})_{i=1}^{\NumberOfVertices}$, then
its $\NumberOfVertices\times\NumberOfVertices$ adjacency matrices
$(\AdjacencyMatrix^{\Colour})_{\Colour=1}^{\NumberOfColours}$ read
\[
\AdjacencyMatrix_{ij}^{\Colour}=\begin{cases}
R(\ElementInG_{i}\GeneratorsOfG^{\Colour}\ElementInG_{j}^{-1}) & \textrm{if }H\ElementInG_{i}\GeneratorsOfG^{\Colour}=H\ElementInG_{j}\\
0 & \textrm{otherwise.}
\end{cases}
\]
In the following, we show that the map
\begin{equation}
\RepresentationOfG\colon\GeneratorsOfG^{\Colour_{1}}\GeneratorsOfG^{\Colour_{2}}\cdots\GeneratorsOfG^{\Colour_{l}}\mapsto\AdjacencyMatrix^{\Colour_{1}}\AdjacencyMatrix^{\Colour_{2}}\cdots\AdjacencyMatrix^{\Colour_{l}}\label{eq:Definition_of_Rep_on_Products_of_Generators}
\end{equation}
yields a well-defined homomorphism $\RepresentationOfG\colon G\to GL(\NumberOfVertices,\mathbb{R})$,
that is, a representation. More precisely, we prove by induction that
for any $\ProductOfAdjMat\in G$, definition~(\ref{eq:Definition_of_Rep_on_Products_of_Generators})
leads to 
\begin{equation}
\RepresentationOfG(\ProductOfAdjMat)_{ij}=\begin{cases}
R(\ElementInG_{i}\ProductOfAdjMat\ElementInG_{j}^{-1}) & \textrm{if }H\ElementInG_{i}\ProductOfAdjMat=H\ElementInG_{j}\\
0 & \textrm{otherwise.}
\end{cases}\label{eq:EntriesOfImageOfHom}
\end{equation}
Assume that (\ref{eq:EntriesOfImageOfHom}) holds for any $\ProductOfAdjMat=\GeneratorsOfG^{\Colour_{1}}\GeneratorsOfG^{\Colour_{2}}\cdots\GeneratorsOfG^{\Colour_{l}}$
with $l\leq L$. For arbitrary $\GeneratorsOfG^{\Colour}$, we get
\[
\RepresentationOfG(\ProductOfAdjMat\GeneratorsOfG^{\Colour})_{ij}=\left(\RepresentationOfG(\ProductOfAdjMat)\RepresentationOfG(\GeneratorsOfG^{\Colour})\right)_{ij}=\sum_{k=1}^{\NumberOfVertices}\RepresentationOfG(\ProductOfAdjMat)_{ik}\RepresentationOfG(\GeneratorsOfG^{\Colour})_{kj}.
\]
Assume that $\RepresentationOfG(\ProductOfAdjMat)_{im}$ is the non-vanishing
entry in the $i$th row of $\RepresentationOfG(\ProductOfAdjMat)$,
that is, $H\ElementInG_{i}\ProductOfAdjMat=H\ElementInG_{m}$ and
$\RepresentationOfG(\ProductOfAdjMat)_{im}=R(\ElementInG_{i}\ProductOfAdjMat\ElementInG_{m}^{-1})$.
Then, $\RepresentationOfG(\ProductOfAdjMat\GeneratorsOfG^{\Colour})_{ij}\neq0$
if and only if $\AdjacencyMatrix_{mj}^{\Colour}=\RepresentationOfG(\GeneratorsOfG^{\Colour})_{mj}\neq0$,
that is, if $H\ElementInG_{m}\GeneratorsOfG^{\Colour}=H\ElementInG_{j}$,
and in this case $\RepresentationOfG(\GeneratorsOfG^{\Colour})_{mj}=R(\ElementInG_{m}\GeneratorsOfG^{\Colour}\ElementInG_{j}^{-1})$.
Since $\ElementInG_{i}\ProductOfAdjMat\ElementInG_{m}^{-1}\in H$,
this is equivalent to
\[
\ElementInG_{i}\ProductOfAdjMat\ElementInG_{m}^{-1}\ElementInG_{m}\GeneratorsOfG^{\Colour}\ElementInG_{j}^{-1}=\ElementInG_{i}\ProductOfAdjMat\GeneratorsOfG^{\Colour}\ElementInG_{j}^{-1}\in H,
\]
and in this case 
\[
\RepresentationOfG(\ProductOfAdjMat\GeneratorsOfG^{\Colour})_{ij}=\RepresentationOfG(\ProductOfAdjMat)_{im}\RepresentationOfG(\GeneratorsOfG^{\Colour})_{mj}=R(\ElementInG_{i}\ProductOfAdjMat\ElementInG_{m}^{-1})R(\ElementInG_{m}\GeneratorsOfG^{\Colour}\ElementInG_{j}^{-1})=R(\ElementInG_{i}\ProductOfAdjMat\GeneratorsOfG^{\Colour}\ElementInG_{j}^{-1}),
\]
where we used that $R$ is a homomorphism. Hence, (\ref{eq:EntriesOfImageOfHom})
follows by induction on $L$.

It remains to show that $\RepresentationOfG$ and $\InducedRep HGR$
have equal characters. For arbitrary $\ProductOfAdjMat\in G$, let
\[
B^{\pm}(\ProductOfAdjMat)=\{i\in\{1,2,\ldots,\NumberOfVertices\}\,|\,\ElementInG_{i}\ProductOfAdjMat\ElementInG_{i}^{-1}\in H\textrm{ and }R(\ElementInG_{i}\ProductOfAdjMat\ElementInG_{i}^{-1})=\pm1\}.
\]
According to (\ref{eq:EntriesOfImageOfHom}), the character $\chi_{\RepresentationOfG}$
of $\RepresentationOfG$ satisfies
\[
\chi_{\RepresentationOfG}(\ProductOfAdjMat)=\Trace(\RepresentationOfG(\ProductOfAdjMat))=\left|B^{+}(\ProductOfAdjMat)\right|-\left|B^{-}(\ProductOfAdjMat)\right|.
\]
The character of $\InducedRep HGR$ reads~\foreignlanguage{british}{\cite[Chapter 3, Theorem 12]{Serre1977}}
\[
\chi_{\InducedRep HGR}(\ProductOfAdjMat)=\frac{1}{\left|H\right|}\sum_{\{\ElementInG\in G\,|\,\ElementInG\ProductOfAdjMat\ElementInG^{-1}\in H\}}R(\ElementInG\ProductOfAdjMat\ElementInG^{-1}).
\]
Since any $\ElementInG\in G$ can be uniquely written as $\ElementInG=h\ElementInG_{i}$
for some $h\in H$ and $i\in\{1,2,\ldots,\NumberOfVertices\}$, the
condition $\ElementInG\ProductOfAdjMat\ElementInG^{-1}\in H$ is equivalent
to $g\in H\ElementInG_{i}$ for some $i\in B^{+}(\ProductOfAdjMat)\cup B^{-}(\ProductOfAdjMat)$.
Moreover, if $\ElementInG=h\ElementInG_{i}$ with $h\in H$ and $i\in B^{\pm}(\ProductOfAdjMat)$,
then 
\[
R(\ElementInG\ProductOfAdjMat\ElementInG^{-1})=R(h\ElementInG_{i}\ProductOfAdjMat\ElementInG_{i}^{-1}h^{-1})=R(\ElementInG_{i}\ProductOfAdjMat\ElementInG_{i}^{-1})=\pm1.
\]
Hence, 
\begin{eqnarray*}
\chi_{\InducedRep HGR}(\ProductOfAdjMat) & = & \frac{1}{\left|H\right|}\left(\sum_{i\in B^{+}(\ProductOfAdjMat)}\sum_{\ElementInG\in H\ElementInG_{i}}1+\sum_{i\in B^{-}(\ProductOfAdjMat)}\sum_{\ElementInG\in H\ElementInG_{i}}-1\right)\\
 & = & \left|B^{+}(\ProductOfAdjMat)\right|-\left|B^{-}(\ProductOfAdjMat)\right|=\chi_{\RepresentationOfG}(\ProductOfAdjMat),
\end{eqnarray*}
which completes the proof.
\end{proof}

\section{Generating methods\label{sec:GeneratingTools}}

The Transplantation Theorem allows to derive methods with which new
transplantable pairs can be generated from given ones. In the following,
let $\Graph=\bigcup_{i}\Gamma_{i}$ and $\OnSecond{\Graph}=\bigcup_{j}\OnSecond{\Gamma}_{j}$
be transplantable loop-signed graphs with edge colors $1,2,\ldots,\NumberOfColours$,
vertices $1,2,\ldots,\NumberOfVertices$ and connected components
$(\Gamma_{i})_{i}$ and $(\OnSecond{\Gamma}_{j})_{j}$, respectively.
We let $(\AdjacencyMatrix^{\Colour})_{\Colour=1}^{\NumberOfColours}$
and $(\OnSecond{\AdjacencyMatrix}^{\Colour})_{\Colour=1}^{\NumberOfColours}$
denote their $\NumberOfVertices\times\NumberOfVertices$ adjacency
matrices and choose $T\in GL(\NumberOfVertices,\mathbb{R})$ with
\begin{equation}
\OnSecond{\AdjacencyMatrix}^{\Colour}=T\AdjacencyMatrix^{\Colour}T^{-1}\qquad\text{for }\Colour=1,2,\ldots,\NumberOfColours.\label{eq:TransplantationConditionInSectionFour}
\end{equation}

\subsection{Partial dualization\label{sub:Dualisation}}

Dualization refers to the process of changing loop signs of $\Graph$
and $\OnSecond{\Graph}$ without affecting their transplantability.
As a trivial example, if both $\AdjacencyMatrix^{\Colour}$ and $\OnSecond{\AdjacencyMatrix}^{\Colour}$
are diagonal matrices for some $\Colour$, then we can replace them
by $-\AdjacencyMatrix^{\Colour}$ and $-\OnSecond{\AdjacencyMatrix}^{\Colour}$
to obtain a new transplantable pair. For any choice of building block,
the two pairs of graphs would give rise to the same tiled manifolds
but with opposite boundary conditions on all outer faces corresponding
to the edge color $\Colour$.

We generalize this idea. For arbitrary $S\subseteq\{1,2,\ldots,\NumberOfColours\}$,
let $(\Gamma_{S,k})_{k}$ and $(\OnSecond{\Gamma}_{S,l})_{l}$ be
the connected components of the graphs obtained by removing all $S$-colored
edges from $\Graph$ and $\OnSecond{\Graph}$, respectively. Assume
we can assign each of $(\Gamma_{S,k})_{k}$ to either $+1$ or $-1$
in such a way that whenever two of them were connected by an $S$-colored
link in $\Graph$, then they are assigned to opposite numbers, similarly
for $(\OnSecond{\Gamma}_{S,l})_{l}$. We encode these partitionings
of the vertices of $\Graph$ and $\OnSecond{\Graph}$ by diagonal
matrices of the form 
\begin{equation}
P=\mathrm{diag}(\pm1,\pm1,\ldots,\pm1)\quad\textrm{ and }\quad\OnSecond P=\mathrm{diag}(\pm1,\pm1,\ldots,\pm1).\label{eq:OrientationMats}
\end{equation}
We show that the graph obtained by swapping the signs of all $S$-colored
loops of $\Graph$ has adjacency matrices $-P\AdjacencyMatrix^{\Colour}P$
for all $\Colour\in S$ and $P\AdjacencyMatrix^{\Colour}P$ for all
$\Colour\notin S$, similarly for $\OnSecond{\Graph}$. Since $\AdjacencyMatrix^{\Colour}$
and $\pm P\AdjacencyMatrix^{\Colour}P$ have the same vanishing entries,
it suffices to consider the non-vanishing ones. If $\AdjacencyMatrix_{ij}^{\Colour}=1$
with $i\neq j$, then $P_{ii}P_{jj}=\mp1$ depending on whether $\Colour\in S$
or $\Colour\notin S$, which yields 
\[
(\mp P\AdjacencyMatrix^{\Colour}P)_{ij}=\mp P_{ii}A_{ij}^{\Colour}P_{jj}=\AdjacencyMatrix_{ij}^{\Colour}.
\]
On the other hand, if $\AdjacencyMatrix_{ii}^{\Colour}\neq0$, then
\[
(\mp P\AdjacencyMatrix^{\Colour}P)_{ii}=\mp P_{ii}\AdjacencyMatrix_{ii}^{\Colour}P_{ii}=\mp\AdjacencyMatrix_{ii}^{\Colour}.
\]
Since $P$ and $\OnSecond P$ are self-inverse, the transplantation
matrix $\OnSecond PTP$ satisfies
\[
(\mp\OnSecond P\OnSecond{\AdjacencyMatrix}^{\Colour}\OnSecond P)(\OnSecond PTP)=\mp\OnSecond P\OnSecond{\AdjacencyMatrix}^{\Colour}TP=\mp\OnSecond PT\AdjacencyMatrix^{\Colour}P=(\OnSecond PTP)(\mp P\AdjacencyMatrix^{\Colour}P).
\]
Hence, swapping the signs of all $S$-colored loops of $\Graph$ and
$\OnSecond{\Graph}$ yields transplantable graphs. Note that we can
choose $S=\{1,2,\ldots,\NumberOfColours\}$ if and only if the loopless
versions of $\Graph$ and $\OnSecond{\Graph}$ are $2$-colorable
which is equivalent to being bipartite.
\begin{defn}
\label{def:DualPair}Each pair of transplantable loop-signed graphs
with bipartite loopless versions has a transplantable dual pair obtained
by swapping all loop signs. A pair is called self-dual if it is isomorphic
to its dual pair.
\end{defn}
Note that loop-signed graphs with bipartite loopless versions give
rise to orientable tiled manifolds if the underlying building block
$B$ is orientable. More precisely, if $\omega$ is a non-vanishing
volume form on $B$ and if $P=\mathrm{diag}(\pm1,\pm1,\ldots,\pm1)$
is as in~(\ref{eq:OrientationMats}), then one obtains a non-vanishing
volume form on the tiled manifold by taking $P_{ii}\omega$ on its
$i$th block. The different signs on neighboring blocks compensate
for the reflection coming from the gluing process.

\subsection{Braiding}

Braiding means replacing the adjacency matrices $\AdjacencyMatrix^{\Colour}$
and $\OnSecond{\AdjacencyMatrix}^{\Colour}$ of some edge color $\Colour$
by conjugates of the form $\AdjacencyMatrix^{\Colour'}\AdjacencyMatrix^{\Colour}\AdjacencyMatrix^{\Colour'}$
and $\OnSecond{\AdjacencyMatrix}^{\Colour'}\OnSecond{\AdjacencyMatrix}^{\Colour}\OnSecond{\AdjacencyMatrix}^{\Colour'}$.
If any of these matrices has negative off-diagonal entries, then they
still describe isospectral manifolds but with respect to the Laplacian
on a possibly non-trivial bundle. If the underlying graphs have bipartite
loopless versions, then one can remove the negative off-diagonal entries
by conjugating with matrices of the form~(\ref{eq:OrientationMats}).
In this way, the $32$ pairs of transplantable graphs with $3$ edge
colors and $7$ vertices in Figure~\ref{fig:Appendix-7-vertices-per-graph}
in the appendix fall into $8$ classes, called quilts~\cite{ConwayHsu1995}.

\subsection{Copying, adding and omission of edge colors and pairs\label{sub:CopyingOfSegments}\label{sub:OmiAndAddOfTransPairs}}

For $\Colour'\in\{1,2,\ldots,\NumberOfColours\}$, the loop-signed
graphs with edge colors $1,2,\ldots,\NumberOfColours+1$ and adjacency
matrices $(\AdjacencyMatrix^{\Colour})_{\Colour=1}^{\NumberOfColours+1}$
and $(\OnSecond{\AdjacencyMatrix}^{\Colour})_{\Colour=1}^{\NumberOfColours+1}$,
where $\AdjacencyMatrix^{\NumberOfColours+1}=\AdjacencyMatrix^{\Colour'}$
and $\OnSecond{\AdjacencyMatrix}^{\NumberOfColours+1}=\OnSecond{\AdjacencyMatrix}^{\Colour'}$,
trivially satisfy~(\ref{eq:TransplantationConditionInSectionFour}).
Alternatively, we can add an edge color by setting $\AdjacencyMatrix^{\NumberOfColours+1}=\OnSecond{\AdjacencyMatrix}^{\NumberOfColours+1}=\pm I_{\NumberOfVertices}$.
On the level of tiled manifolds, this can sometimes be interpreted
as regarding a component of a reflecting face or parts of $\partial_{D}B$
or $\partial_{N}B$ in~(\ref{eq:BoundaryDecompositionOfBuildingBlock})
as a new reflecting face, respectively. On the other hand, one can
trivially omit an edge color which corresponds to undoing the respective
gluings and imposing the same type of boundary conditions on all resulting
outer reflecting faces as indicated in Figure~\ref{fig:Omission}.

On the analogy of adding edge colors, we can add transplantable components
to $\Graph$ and $\OnSecond{\Graph}$ without affecting their transplantability.
According to Theorem~\ref{thm:TransGraphsAreSchreierGraphs} and
Theorem~\ref{thm:TransSchreierGraphs}, the converse holds, that
is, if the components $\Graph_{k}$ and $\OnSecond{\Graph}_{l}$ of
$\Graph$ and $\OnSecond{\Graph}$ are transplantable, then so are
$\Graph\backslash\Graph_{k}=\bigcup_{i\neq k}\Gamma_{i}$ and $\OnSecond{\Graph}\backslash\OnSecond{\Graph}_{l}=\bigcup_{j\neq l}\OnSecond{\Gamma}_{j}$.
Note that the same statement holds for isospectrality, that is, manifolds
remain isospectral if one removes isospectral components. As an example,
we can omit the two identical triangles in Figure~\ref{fig:Omission}.
The resulting pair was discovered by Band~et~al.~\cite{BandParzanchevskiBen-Shach2009}.
Their search was motivated by Chapman\textquoteright{}s two-piece
band~\cite{Chapman1995} with pure boundary conditions, which can
be obtained similarly.

\begin{figure}
\noindent \begin{centering}
\psfragBodyNew\hfill{}\includegraphics[scale=0.42]{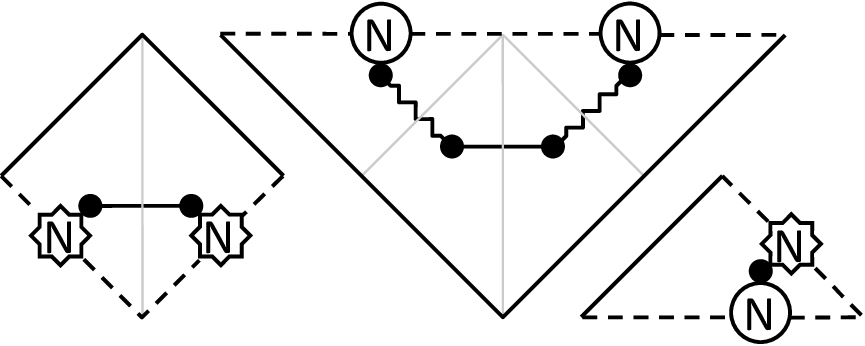}\hspace*{\fill}\includegraphics[scale=0.42]{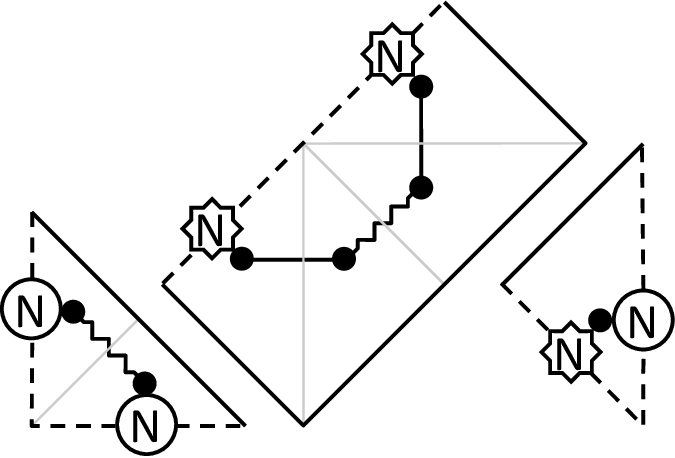}\hfill{}
\par\end{centering}

\caption{Transplantable pair obtained from the pair underlying the Gordon-Webb-Wolpert
drums with Neumann boundary conditions by omission of an edge color.\label{fig:Omission}}
\end{figure}

\subsection{Crossings\label{sub:CrossingofTransplantable}}

In the following, we use tensor products of linear maps. Their basis
dependent matrix representations are known as Kronecker products.
\begin{defn}
The Kronecker product of an $m\times m$ matrix $A$ and an $n\times n$
matrix $B$ is the $mn\times mn$ block matrix
\[
A\Cross B=\left(\begin{array}{cccc}
A_{11}B & A_{12}B & \ldots & A_{1m}B\\
A_{21}B & A_{22}B & \ldots & A_{2m}B\\
\vdots & \vdots & \ddots & \vdots\\
A_{m1}B & A_{m2}B & \cdots & A_{mm}B
\end{array}\right).
\]
\end{defn}
\begin{lem}
\label{lem:Commutation_Relation_of_Kronecker_Product}For each pair
$(m,n)$ of positive integers, there is an $mn\times mn$ permutation
matrix $P^{(m,n)}$ such that for every $m\times m$ matrix $A$ and
every $n\times n$ matrix $B$
\[
\left(A\Cross B\right)P^{(m,n)}=P^{(m,n)}\left(B\Cross A\right).
\]

\end{lem}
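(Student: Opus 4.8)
The plan is to realise $P^{(m,n)}$ as the matrix of the tensor-factor \emph{swap} and to read the identity off from the bifunctoriality of the Kronecker product. First I would identify $\mathbb{R}^{m\Times n}$ with $\mathbb{R}^{m}\Cross\mathbb{R}^{n}$ by placing $\BasisVector_{i}\Cross\BasisVector_{k}$ in position $(i-1)n+k$, where $\BasisVector_{i}$ and $\BasisVector_{k}$ denote the standard basis vectors of $\mathbb{R}^{m}$ and $\mathbb{R}^{n}$, respectively; this is exactly the ordering for which the matrix in the preceding definition is the matrix of the operator $A\Cross B$ acting on $\mathbb{R}^{m}\Cross\mathbb{R}^{n}$. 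Likewise I would identify $\mathbb{R}^{n\Times m}$ with $\mathbb{R}^{n}\Cross\mathbb{R}^{m}$. Let $S\colon\mathbb{R}^{n}\Cross\mathbb{R}^{m}\longrightarrow\mathbb{R}^{m}\Cross\mathbb{R}^{n}$ be the linear map determined by $S(\BasisVector_{k}\Cross\BasisVector_{i})=\BasisVector_{i}\Cross\BasisVector_{k}$, and let $P^{(m,n)}$ be its matrix with respect to the standard bases. Since $S$ merely relabels basis vectors via the bijection $(k,i)\mapsto(i,k)$ between the two index sets (each of cardinality $m\Times n$), the matrix $P^{(m,n)}$ is a permutation matrix, and it clearly does not depend on $A$ or $B$.

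Next I would verify that $(A\Cross B)\circ S=S\circ(B\Cross A)$ as linear maps; passing to matrices then yields $(A\Cross B)\Times P^{(m,n)}=P^{(m,n)}\Times(B\Cross A)$, since the matrix of a composite is the product of the matrices. By linearity it suffices to evaluate both sides on the basis vectors $\BasisVector_{k}\Cross\BasisVector_{i}$ of $\mathbb{R}^{n}\Cross\mathbb{R}^{m}$. On the one hand,
\[
(A\Cross B)\bigl(S(\BasisVector_{k}\Cross\BasisVector_{i})\bigr)=(A\Cross B)(\BasisVector_{i}\Cross\BasisVector_{k})=(A\BasisVector_{i})\Cross(B\BasisVector_{k}),
\]
and on the other hand,
\[
S\bigl((B\Cross A)(\BasisVector_{k}\Cross\BasisVector_{i})\bigr)=S\bigl((B\BasisVector_{k})\Cross(A\BasisVector_{i})\bigr)=(A\BasisVector_{i})\Cross(B\BasisVector_{k}),
\]
where the last equality uses linearity of $S$ together with $S(\BasisVector_{k}\Cross\BasisVector_{i})=\BasisVector_{i}\Cross\BasisVector_{k}$. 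The two expressions agree, which completes the argument.

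I do not expect a genuine obstacle here: the lemma is the standard ``commutation matrix'' fact, and the only point requiring care is to match the block convention of the Kronecker product to the lexicographic order on the tensor basis, so that $A\Cross B$ is indeed the matrix of the operator $A\Cross B$. A reader who prefers to avoid tensor language can instead take $P^{(m,n)}$ to be the $0/1$ matrix whose $\bigl((i-1)n+k,\,(k-1)m+i\bigr)$ entries are $1$ and whose remaining entries vanish, and check the identity by comparing the $\bigl((i-1)n+k,\,(j-1)m+l\bigr)$ entries of both sides using $(A\Cross B)_{(i-1)n+k,\,(i'-1)n+k'}=a_{ii'}\Times b_{kk'}$; this is routine index bookkeeping with exactly the same content.
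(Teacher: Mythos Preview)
Your argument is correct and is precisely the ``easy exercise with bases'' that the paper alludes to in lieu of a proof: you realise $P^{(m,n)}$ as the matrix of the tensor swap and verify the identity on basis vectors. There is nothing to add.
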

Lemma~\ref{lem:Commutation_Relation_of_Kronecker_Product} is an
easy exercise with bases. Note that Kronecker products of symmetric
permutation matrices are symmetric permutation matrices, which justifies
the following definition.
\begin{defn}
Let $\Graph_{1}$ and $\Graph_{2}$ be loop-signed graphs without
Dirichlet loops having $\NumberOfVertices_{1}$, respectively $\NumberOfVertices_{2}$,
vertices and adjacency matrices $(\AdjacencyMatrix_{1}^{\Colour})_{\Colour=1}^{\NumberOfColours_{1}}$,
respectively $(\AdjacencyMatrix_{2}^{\Colour})_{\Colour=1}^{\NumberOfColours_{2}}$.
The crossing\emph{ }$\Crossing{\Graph_{1}}{\Graph_{2}}$ is the loop-signed
graph with $\NumberOfColours_{1}\NumberOfColours_{2}$ edge colors,
$\NumberOfVertices_{1}\NumberOfVertices_{2}$ vertices and adjacency
matrices
\[
\AdjacencyMatrix_{\Cross}^{[\Colour_{1},\Colour_{2}]}=\AdjacencyMatrix_{1}^{\Colour_{1}}\Cross\AdjacencyMatrix_{2}^{\Colour_{2}},
\]
where $[\Colour_{1},\Colour_{2}]=\Colour_{1}+(\Colour_{2}-1)\NumberOfColours_{1}$
for $\Colour_{1}=1,2,\ldots,\NumberOfColours_{1}$ and $\Colour_{2}=1,2,\ldots,\NumberOfColours_{2}$.
\end{defn}

Lemma~\ref{lem:Commutation_Relation_of_Kronecker_Product} implies
that for $\Graph_{1}$ and $\Graph_{2}$ as above, $\Crossing{\Graph_{1}}{\Graph_{2}}$
and $\Crossing{\Graph_{2}}{\Graph_{1}}$ only differ by a renumbering
of their vertices and edge colors. Let $((\Graph_{i},\OnSecond{\Graph}_{i}))_{i=1,2}$
be two pairs of transplantable loop-signed graphs without Dirichlet
loops given by adjacency matrices $(\AdjacencyMatrix_{i}^{\Colour})_{\Colour=1}^{\NumberOfColours_{i}}$
and $(\OnSecond{\AdjacencyMatrix}_{i}^{\Colour})_{\Colour=1}^{\NumberOfColours_{i}}$,
respectively. Let $(T_{i})_{i=1,2}$ be transplantation matrices such
that
\[
\OnSecond{\AdjacencyMatrix}_{i}^{\Colour}=T_{i}\AdjacencyMatrix_{i}^{\Colour}T_{i}^{-1}\qquad\text{for }i=1,2\text{ and }\Colour=1,2,\ldots,\NumberOfColours_{i}.
\]
Using the invertible matrix $T_{1}\Cross T_{2}$, we obtain that for
$\Colour_{1}=1,2,\ldots,\NumberOfColours_{1}$ and $\Colour_{2}=1,2,\ldots,\NumberOfColours_{2}$
\[
\begin{array}{rclcl}
\OnSecond{\AdjacencyMatrix}_{\Cross}^{[\Colour_{1},\Colour_{2}]}(T_{1}\Cross T_{2}) & = & \OnSecond{\AdjacencyMatrix}_{1}^{\Colour_{1}}T_{1}\Cross\OnSecond{\AdjacencyMatrix}_{2}^{\Colour_{2}}T_{2}\\
 & = & T_{1}\AdjacencyMatrix_{1}^{\Colour_{1}}\Cross T_{2}\AdjacencyMatrix_{2}^{\Colour_{2}} & = & (T_{1}\Cross T_{2})\AdjacencyMatrix_{\Cross}^{[\Colour_{1},\Colour_{2}]},
\end{array}
\]
which shows that $\Crossing{\Graph_{1}}{\Graph_{2}}$ and $\Crossing{\OnSecond{\Graph}_{1}}{\OnSecond{\Graph}_{2}}$
are transplantable.

\subsection{Substitutions\label{sub:TheSubstitutionMethod}}

Levitin~et~al.~\cite{LevitinParnovskiPolterovich2006} discovered
the strikingly simple pair of transplantable domains shown in Figure~\ref{fig:SubstitutionInTermsOfTiledDomains},
a broken square that sounds like a broken triangle. The matrix
\[
T=\left(\begin{array}{cc}
-1 & 1\\
1 & 1
\end{array}\right)
\]
gives an intertwining transplantation. We subdivide the triangles
into smaller blocks as indicated in Figure~\ref{fig:SubstitutionInTermsOfTiledDomains},
where we added notches for the sake of clarity. The transplantation
respects this subdivision in the sense that if $\varphi$ solves the
Zaremba problem on the punctured square and has restrictions $(\varphi_{i}){}_{i=1}^{4}$,
then the restrictions $\OnSecond{\varphi}_{1}$ and $\OnSecond{\varphi}_{2}$
of its transform $\OnSecond{\varphi}=T(\varphi)$ on the notched triangle
depend on $\varphi_{1}$ and $\varphi_{2}$ only, similarly for $\OnSecond{\varphi}_{3}$
and $\OnSecond{\varphi}_{4}$. In other words, the subdivided domains
can be regarded as a pair of transplantable tiled manifolds each of
which consists of $4$ blocks. With respect to the subdivision, the
transplantation reads 
\begin{equation}
\left(\begin{array}{cccc}
-1 & 1 & 0 & 0\\
1 & 1 & 0 & 0\\
0 & 0 & -1 & 1\\
0 & 0 & 1 & 1
\end{array}\right)=\left(\begin{array}{cc}
1 & 0\\
0 & 1
\end{array}\right)\Cross T.\label{eq:Transplantation_Matrix_Four_by_Four}
\end{equation}

With respect to the graph representation, subdividing the blocks corresponds
to substituting a whole graph into each of the vertices of the associated
loop-signed graphs as indicated in Figure~\ref{fig:SubstitutionInTermsOfTiledDomains}.
We develop this method in detail. Let $\Graph$ and $\GraphOfSubstituent$
be loop-signed graphs having~$\NumberOfColours$, respectively $\NumberOfEdgeColoursOfSubstituent$,
edge colors and $\NumberOfVertices$, respectively $\NumberOfVerticesOfSubstituent$,
vertices. Let $(\AdjacencyMatrix^{\Colour})_{\Colour=1}^{\NumberOfColours}$
and $(\AdjacencyMatrixOfSubstituent{\ColourOfSubstituent})_{\ColourOfSubstituent=1}^{\NumberOfEdgeColoursOfSubstituent}$
denote their adjacency matrices. In order to assign Neumann loops
of $\GraphOfSubstituent$ to edge colors of $\Graph$, we let $I^{\ColourOfSubstituent}$
be a set of vertices of $\GraphOfSubstituent$ that have a $\ColourOfSubstituent$-colored
Neumann loop, that is,
\[
\LoopIndices^{\ColourOfSubstituent}\subseteq\left\{ i\,|\,(\AdjacencyMatrixOfSubstituent{\ColourOfSubstituent})_{ii}=1\right\} .
\]
We divide $\LoopIndices^{\ColourOfSubstituent}$ into disjoint vertex
subsets $(\LoopIndices_{\Colour}^{\ColourOfSubstituent})_{\Colour=1}^{\NumberOfColours}$
each of which represents the Neumann loops of $\GraphOfSubstituent$
that are assigned to the corresponding edge color of $\Graph$. Let
$(\AdjacencyMatrixOfSubstituent{\ColourOfSubstituent})_{i}$ denote
the $\NumberOfVerticesOfSubstituent\times\NumberOfVerticesOfSubstituent$
matrix which has a unity entry at position $(i,i)$ and is zero at
all other entries. The matrix 
\begin{equation}
(\AdjacencyMatrixOfSubstituent{\ColourOfSubstituent})_{*}=\AdjacencyMatrixOfSubstituent{\ColourOfSubstituent}-\sum_{i\in\LoopIndices^{\ColourOfSubstituent}}(\AdjacencyMatrixOfSubstituent{\ColourOfSubstituent})_{i}\label{eq:Invariant_part_of_substituent}
\end{equation}
describes the $\ColourOfSubstituent$-colored part of $\GraphOfSubstituent$
that reappears at each vertex of $\Graph$.

\begin{figure}
\noindent \begin{centering}
\psfragBody\psfrag{A}{\hspace{-1mm}\raisebox{0.5mm}{\LARGE{$\Rightarrow$}}} \psfrag{1}{$1$}
\psfrag{2}{$2$}
\psfrag{3}{$3$}
\psfrag{4}{$4$}
\psfrag{5}{\hspace{3mm}\raisebox{0mm}{$\Graph$}}
\psfrag{6}{}%
\begin{tabular}{c>{\centering}p{14mm}rcl}
 &  & $\GraphOfSubstituent$\hspace{2mm} & \selectlanguage{american}%
$\SubstitutedGraph{\Graph}{\GraphOfSubstituent}$\selectlanguage{english}%
 & \multirow{2}{*}{\includegraphics[scale=0.42]{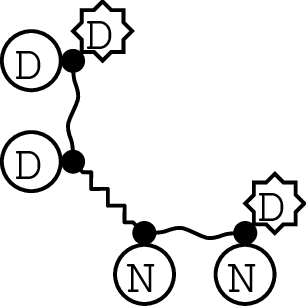}}\tabularnewline[-1mm]
\raisebox{0.8mm}{\includegraphics[scale=0.42]{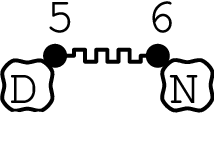}} & \includegraphics[scale=0.42]{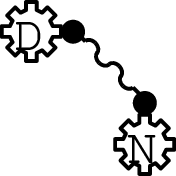} & \includegraphics[scale=0.42]{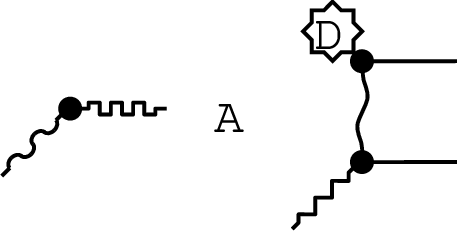} & \psfrag{2}{$3$} \psfrag{3}{$2$}\includegraphics[scale=0.42]{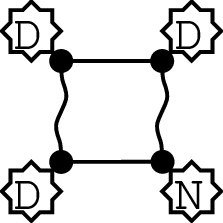} & \tabularnewline
\noalign{\vskip\doublerulesep}
\psfrag{2}{$2$} \psfrag{3}{$3$}\includegraphics[scale=0.45]{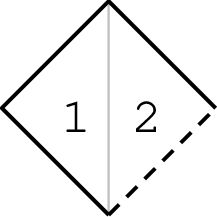} & \includegraphics[scale=0.45]{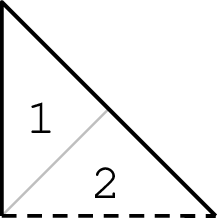} & \includegraphics[scale=0.45]{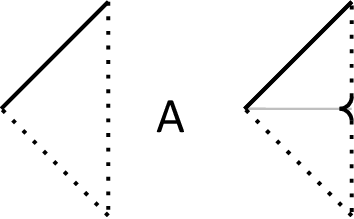}\hspace{3.5mm} & \psfrag{2}{$3$} \psfrag{3}{$2$}\includegraphics[scale=0.45]{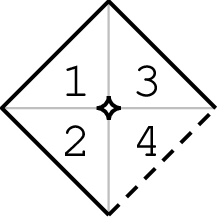} & \psfrag{2}{$3$} \psfrag{3}{$2$}\includegraphics[scale=0.45]{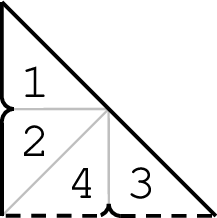}\tabularnewline
\noalign{\vskip\doublerulesep}
\end{tabular}
\par\end{centering}

\noindent \centering{}\caption{Substitution in terms of graphs and manifolds.\label{fig:SubstitutionInTermsOfTiledDomains}}
\end{figure}

\begin{defn}
\label{def:TheAdjacencyMatrixOfSubstitutedGraph}The substituted graph
\emph{$\SubstitutedGraph{\Graph}{\GraphOfSubstituent}$} is the loop-signed
graph with edge colors $\ColourOfSubstituent=1,2,\ldots,\NumberOfEdgeColoursOfSubstituent$
and vertices $1,2,\ldots,\NumberOfVertices\NumberOfVerticesOfSubstituent$
given by the adjacency matrices 
\begin{equation}
\AdjacencyMatrix_{\Substitution}^{\ColourOfSubstituent}=(\AdjacencyMatrixOfSubstituent{\ColourOfSubstituent})_{*}\Cross I_{\NumberOfVertices}+\sum_{\Colour}\sum_{i\in\LoopIndices_{\Colour}^{\ColourOfSubstituent}}(\AdjacencyMatrixOfSubstituent{\ColourOfSubstituent})_{i}\Cross\AdjacencyMatrix^{\Colour},\label{eq:AdjacencyMatricOfSubstitutedGraph}
\end{equation}
where $I_{\NumberOfVertices}$ denotes the $\NumberOfVertices\times\NumberOfVertices$
identity matrix.

\end{defn}
We briefly justify why~(\ref{eq:AdjacencyMatricOfSubstitutedGraph})
yields adjacency matrices. One easily sees that all summands are symmetric
and that none has negative off-diagonal entries. Moreover, one can
view $\AdjacencyMatrix_{\Substitution}^{\ColourOfSubstituent}$ as
a sum of $\NumberOfVerticesOfSubstituent\times\NumberOfVerticesOfSubstituent$
block matrices where the $\NumberOfVertices\times\NumberOfVertices$
blocks are given by multiples of $I_{\NumberOfVertices}$ and $\AdjacencyMatrix^{\Colour}$.
Since $\AdjacencyMatrixOfSubstituent{\ColourOfSubstituent}$, $I_{\NumberOfVertices}$
and $(\AdjacencyMatrix^{\Colour})_{\Colour=1}^{\NumberOfColours}$
are signed permutation matrices, one can use (\ref{eq:Invariant_part_of_substituent})
to show that each row and each column of $\AdjacencyMatrix_{\Substitution}^{\ColourOfSubstituent}$
contains exactly one non-vanishing entry.

The graphs in Figure~\ref{fig:SubstitutionInTermsOfTiledDomains}
exemplify the substitution method. The outward-pointing edges of $\GraphOfSubstituent$
represent its Neumann loops that are assigned to edge colors of $\Graph$,
namely, 
\[
\LoopIndices^{straight}=\LoopIndices_{ridged}^{straight}=\{1,2\}\quad\LoopIndices^{zig-zag}=\LoopIndices_{curly}^{zig-zag}=\{2\}\quad\LoopIndices^{wavy}=\varnothing.
\]
For example, the \emph{ridged }link of $\Graph$ is replaced by two
\emph{straight }links and each \emph{curly} loop of $\Graph$ is replaced
by a \emph{zig-zag} loop with the same sign. We give the decompositions
of two adjacency matrices of \emph{$\SubstitutedGraph{\Graph}{\GraphOfSubstituent}$}:\setlength{\arraycolsep}{2pt}
\[
\begin{array}{ccccccccc}
\AdjacencyMatrix_{\Substitution}^{straight} & = & (\AdjacencyMatrixOfSubstituent{straight})_{1} & \Cross & \AdjacencyMatrix^{ridged} & + & (\AdjacencyMatrixOfSubstituent{straight})_{2} & \Cross & \AdjacencyMatrix^{ridged}\\
\left(\begin{array}{cccc}
0 & 1 & 0 & 0\\
1 & 0 & 0 & 0\\
0 & 0 & 0 & 1\\
0 & 0 & 1 & 0
\end{array}\right) & = & \left(\begin{array}{cc}
1 & 0\\
0 & 0
\end{array}\right) & \Cross & \left(\begin{array}{cc}
0 & 1\\
1 & 0
\end{array}\right) & + & \left(\begin{array}{cc}
0 & 0\\
0 & 1
\end{array}\right) & \Cross & \left(\begin{array}{cc}
0 & 1\\
1 & 0
\end{array}\right)
\end{array}
\]
\[
\begin{array}{ccccccccc}
\AdjacencyMatrix_{\Substitution}^{zig-zag} & = & (\AdjacencyMatrixOfSubstituent{zig-zag})_{*} & \Cross & I_{2} & + & (\AdjacencyMatrixOfSubstituent{zig-zag})_{2} & \Cross & \AdjacencyMatrix^{curly}\\
\left(\begin{array}{cccc}
-1 & 0 & 0 & 0\\
0 & -1 & 0 & 0\\
0 & 0 & -1 & 0\\
0 & 0 & 0 & 1
\end{array}\right) & = & \left(\begin{array}{cc}
-1 & 0\\
0 & 0
\end{array}\right) & \Cross & \left(\begin{array}{cc}
1 & 0\\
0 & 1
\end{array}\right) & + & \left(\begin{array}{cc}
0 & 0\\
0 & 1
\end{array}\right) & \Cross & \left(\begin{array}{cc}
-1 & 0\\
0 & 1
\end{array}\right)
\end{array}
\]

\begin{defn}
Let $G\leq\mathrm{Sym}(X)$ and $H\leq\mathrm{Sym}(Y)$ be permutation
groups on finite sets $X$ and $Y$, respectively. The external wreath
product $G\wr_{Y}H$ is the semi-direct product $G^{Y}\rtimes H$,
where $h\in H$ acts on $f\in G^{Y}$ such that $hf(y)=f(h^{-1}y)$
for $y\in Y$. The imprimitive wreath product action of $G\wr_{Y}H$
on $X\times Y$ is given by $(f,h)(x,y)=(f(hy)x,hy)$.
\end{defn}
\setlength{\arraycolsep}{\myArraycolsep}

\begin{SubstitutionTheorem}\label{TheSubTheorem}Let $\Graph$, $\GraphOfSubstituent$
and $(\LoopIndices_{\Colour}^{\ColourOfSubstituent})_{\Colour=1}^{\NumberOfColours}$
be as above.
\begin{enumerate}
\item If $\Graph$ and $\GraphOfSubstituent$ are connected and for each
edge color $\Colour$ of $\Graph$ there is a Neumann loop of $\GraphOfSubstituent$
that is assigned to it, that is, $\sum_{\ColourOfSubstituent=1}^{\NumberOfEdgeColoursOfSubstituent}\left|\LoopIndices_{\Colour}^{\ColourOfSubstituent}\right|>0$,
then $\SubstitutedGraph{\Graph}{\GraphOfSubstituent}$ is connected.
\item If $\Graph$ and $\GraphOfSubstituent$ are treelike and for each
edge color $\Colour$ of $\Graph$ there is exactly one Neumann loop
of $\GraphOfSubstituent$ that is assigned to it, that is, $\sum_{\ColourOfSubstituent=1}^{\NumberOfEdgeColoursOfSubstituent}\left|\LoopIndices_{\Colour}^{\ColourOfSubstituent}\right|=1$,
then $\SubstitutedGraph{\Graph}{\GraphOfSubstituent}$ is treelike.
\item If $\Graph$ and $\OnSecond{\Graph}$ are transplantable loop-signed
graphs, then $\SubstitutedGraph{\Graph}{\GraphOfSubstituent}$ and
$\SubstitutedGraph{\OnSecond{\Graph}}{\GraphOfSubstituent}$ are transplantable.
More precisely, if $T$ is an intertwining transplantation matrix
for $\Graph$ and $\OnSecond{\Graph}$, then $I_{\NumberOfVerticesOfSubstituent}\Cross T$
is one for $\SubstitutedGraph{\Graph}{\GraphOfSubstituent}$ and $\SubstitutedGraph{\OnSecond{\Graph}}{\GraphOfSubstituent}$.
\item Assume $\GraphOfSubstituent$ has no Dirichlet loops. Let $G$, $\GroupOfSubstituent$
and $\SubstitutedGraph G{\GroupOfSubstituent}$ be the groups generated
by the adjacency matrices of $\Graph$, $\GraphOfSubstituent$ and
$\SubstitutedGraph{\Graph}{\GraphOfSubstituent}$, regarded as permutation
groups on $X=\{\pm\BasisVector_{1}^{V},\pm\BasisVector_{2}^{V},\ldots,\pm\BasisVector_{\NumberOfVertices}^{V}\}$,
$Y=\{\BasisVector_{1}^{\NumberOfVerticesOfSubstituent},\BasisVector_{2}^{\NumberOfVerticesOfSubstituent},\ldots,\BasisVector_{\NumberOfVerticesOfSubstituent}^{\NumberOfVerticesOfSubstituent}\}$
and $Z=\{\pm\BasisVector_{1}^{\NumberOfVertices\NumberOfVerticesOfSubstituent},\pm\BasisVector_{2}^{\NumberOfVertices\NumberOfVerticesOfSubstituent},\ldots,\pm\BasisVector_{\NumberOfVertices\NumberOfVerticesOfSubstituent}^{\NumberOfVertices\NumberOfVerticesOfSubstituent}\}$,
respectively, where $\BasisVector_{i}^{n}$ denotes the $i$th standard
basis vector of $\mathbb{R}^{n}$. Then, $\SubstitutedGraph G{\GroupOfSubstituent}$
is isomorphic to a subgroup of $G\wr_{Y}\GroupOfSubstituent$ and
the isomorphism identifies the action of $\SubstitutedGraph G{\GroupOfSubstituent}$
on $Z$ with the imprimitive wreath product action of $G\wr_{Y}\GroupOfSubstituent$
on $X\times Y$.
\end{enumerate}
\end{SubstitutionTheorem}
\begin{proof}
1) We show that any two vertices of $\SubstitutedGraph{\Graph}{\GraphOfSubstituent}$
are connected by a path of links. Again, we view $\AdjacencyMatrix_{\Substitution}^{\ColourOfSubstituent}$
as a sum of $\NumberOfVerticesOfSubstituent\times\NumberOfVerticesOfSubstituent$
block matrices with $\NumberOfVertices\times\NumberOfVertices$ blocks
given by multiples of $I_{\NumberOfVertices}$ and $\AdjacencyMatrix^{\Colour}$.
If we number the vertices of $\AdjacencyMatrix_{\Substitution}^{\ColourOfSubstituent}$
by $(p-1)\NumberOfVertices+q$ where $1\leq p\leq\NumberOfVerticesOfSubstituent$
and $1\leq q\leq\NumberOfVertices$, then links coming from nonzero
entries of the summand $(\AdjacencyMatrixOfSubstituent{\ColourOfSubstituent})_{*}\Cross I_{\NumberOfVertices}$,
respectively $(\AdjacencyMatrixOfSubstituent{\ColourOfSubstituent})_{i}\Cross\AdjacencyMatrix^{\Colour}$,
connect vertices with equal values of $q$, respectively $p$. In
order to connect the vertices $(p_{1}-1)\NumberOfVertices+q_{1}$
and $(p_{2}-1)\NumberOfVertices+q_{2}$ in $\SubstitutedGraph{\Graph}{\GraphOfSubstituent}$,
one first chooses a sequence of edge colors $\Colour_{1},\Colour_{2},\ldots,\Colour_{L}$
of $\Graph$ describing a self-avoiding connecting path $q_{1}=i_{0},i_{1},\ldots,i_{L}=q_{2}$.
By assumption, the $\Colour_{1}$-connectivity is established by some
edge color of $\GraphOfSubstituent$, that is, there is $\ColourOfSubstituent_{1}\in\{1,2,\ldots,\NumberOfEdgeColoursOfSubstituent\}$
with $\LoopIndices_{\Colour_{1}}^{\ColourOfSubstituent_{1}}\neq\varnothing$.
We choose $m_{1}\in\LoopIndices_{\Colour_{1}}^{\ColourOfSubstituent_{1}}$
and consider a self-avoiding path in $\GraphOfSubstituent$ connecting
$p_{1}$ with $m_{1}$. This gives rise to a path in $\SubstitutedGraph{\Graph}{\GraphOfSubstituent}$
connecting $(p_{1}-1)\NumberOfVertices+q_{1}$ with $(m_{1}-1)\NumberOfVertices+q_{1}$,
which is connected with $(m_{1}-1)\NumberOfVertices+i_{1}$ via a
$\ColourOfSubstituent_{1}$-colored link. In the next step, we choose
$m_{2}\in\LoopIndices_{\Colour_{2}}^{\ColourOfSubstituent_{2}}$ together
with a self-avoiding path in $\GraphOfSubstituent$ connecting $m_{1}$
with $m_{2}$. The rest follows by induction.

2) We have to show that any two vertices of $\SubstitutedGraph{\Graph}{\GraphOfSubstituent}$
are connected by a unique self-avoiding path, in other words, that
the self-avoiding paths constructed above are the only connecting
ones. This follows from the observation that a self-avoiding path
in $\SubstitutedGraph{\Graph}{\GraphOfSubstituent}$ gives rise to
paths in $\Graph$ and $\GraphOfSubstituent$, respectively, and non-uniqueness
would contradict the assumption that these graphs are treelike.

3) We use that $\OnSecond{\AdjacencyMatrix}^{\Colour}T=T\AdjacencyMatrix^{\Colour}$
for $\Colour=1,2,\ldots,\NumberOfColours$ to obtain
\begin{eqnarray*}
\OnSecond{\AdjacencyMatrix}_{\Substitution}^{\ColourOfSubstituent}\left(I_{\NumberOfVerticesOfSubstituent}\Cross T\right) & = & (\AdjacencyMatrixOfSubstituent{\ColourOfSubstituent})_{*}\Cross T+\sum_{\Colour}\sum_{i\in\LoopIndices_{\Colour}^{\ColourOfSubstituent}}(\AdjacencyMatrixOfSubstituent{\ColourOfSubstituent})_{i}\Cross\OnSecond{\AdjacencyMatrix}^{\Colour}T\\
 & = & (\AdjacencyMatrixOfSubstituent{\ColourOfSubstituent})_{*}\Cross T+\sum_{\Colour}\sum_{i\in\LoopIndices_{\Colour}^{\ColourOfSubstituent}}(\AdjacencyMatrixOfSubstituent{\ColourOfSubstituent})_{i}\Cross T\AdjacencyMatrix^{\Colour}=\left(I_{\NumberOfVerticesOfSubstituent}\Cross T\right)\AdjacencyMatrix_{\Substitution}^{\ColourOfSubstituent}.
\end{eqnarray*}

4) The isomorphism maps an adjacency matrix $\AdjacencyMatrix_{\Substitution}^{\ColourOfSubstituent}\in\SubstitutedGraph G{\GroupOfSubstituent}$
to $(f,h)\in G\wr_{Y}\GroupOfSubstituent$ determined as follows.
Let $e_{G}$ denote the neutral element of $G\leq\mathrm{Sym}(X)$.
If $\AdjacencyMatrixOfSubstituent{\ColourOfSubstituent}(\BasisVector_{i}^{\NumberOfVerticesOfSubstituent})=\BasisVector_{j}^{\NumberOfVerticesOfSubstituent}$
for $i\neq j$ or $i=j\notin\LoopIndices^{\ColourOfSubstituent}$,
then $f(\BasisVector_{i}^{\NumberOfVerticesOfSubstituent})=f(\BasisVector_{j}^{\NumberOfVerticesOfSubstituent})=e_{G}$
and $h(\BasisVector_{i}^{\NumberOfVerticesOfSubstituent})=\BasisVector_{j}^{\NumberOfVerticesOfSubstituent}$.
On the other hand, if $i\in\LoopIndices_{\Colour}^{\ColourOfSubstituent}$,
then $f(\BasisVector_{i}^{\NumberOfVerticesOfSubstituent})=\AdjacencyMatrix^{\Colour}$
and $h(\BasisVector_{i}^{\NumberOfVerticesOfSubstituent})=\BasisVector_{i}^{\NumberOfVerticesOfSubstituent}$.
We leave it to the reader to check that this defines an isomorphism
with the desired properties.
\end{proof}
We use Figure~\ref{fig:SubstitutionInTermsOfTiledDomains} to exemplify
the last statement of the previous theorem as well as the adaptions
that are necessary if $\GraphOfSubstituent$ has Dirichlet loops.
The adjacency matrices of $\SubstitutedGraph{\Graph}{\GraphOfSubstituent}$
are determined by their action on $Z=\{\BasisVector_{1}^{4},\BasisVector_{2}^{4},\BasisVector_{3}^{4},\BasisVector_{4}^{4},-\BasisVector_{1}^{4},-\BasisVector_{2}^{4},-\BasisVector_{3}^{4},-\BasisVector_{4}^{4}\}\subset\mathbb{R}^{4}$,
which can be regarded as the set of vertices of the double cover of
$\SubstitutedGraph{\Graph}{\GraphOfSubstituent}$. We identify $Z$
with $X\times Y=\{\BasisVector_{1}^{2},\BasisVector_{2}^{2},-\BasisVector_{1}^{2},-\BasisVector_{2}^{2}\}\times\{\BasisVector_{1}^{2},\BasisVector_{2}^{2}\}\simeq\{1,2,3,4\}\times\{1_{S},2_{S}\}$
via 
\[
\begin{array}{rrrr}
\BasisVector_{1}^{4}\simeq(1,1_{S}) & \BasisVector_{2}^{4}\simeq(2,1_{S}) & \BasisVector_{3}^{4}\simeq(1,2_{S}) & \BasisVector_{4}^{4}\simeq(2,2_{S})\phantom{.}\\
-\BasisVector_{1}^{4}\simeq(3,1_{S}) & -\BasisVector_{2}^{4}\simeq(4,1_{S}) & -\BasisVector_{3}^{4}\simeq(3,2_{S}) & -\BasisVector_{4}^{4}\simeq(4,2_{S}).
\end{array}
\]
We have $G=\langle\AdjacencyMatrix^{curly},\AdjacencyMatrix^{ridged}\rangle\simeq\langle(1,3)(2)(4),(1,2)(3,4)\rangle$.
Writing functions $f\in G^{\{1_{S},2_{S}\}}$ as pairs $(f(1_{S}),f(2_{S}))\in G^{2}$,
an injective homomorphism into $\mathrm{Sym}(X)\wr_{Y}\mathrm{Sym}(Y)$
is given by \setlength{\arraycolsep}{0.8pt}
\[
\begin{array}{lllll}
\AdjacencyMatrix_{\Substitution}^{straight} & \simeq & (1,2)(3,4)(5,6)(7,8) & \simeq & (((1,2)(3,4),(1,2)(3,4)),(1_{S})(2_{S}))\\
\AdjacencyMatrix_{\Substitution}^{wavy} & \simeq & (1,3)(2,4)(5,7)(6,8) & \simeq & (((1)(2)(3)(4),(1)(2)(3)(4)),(1_{S},2_{S}))\\
\AdjacencyMatrix_{\Substitution}^{zig-zag} & \simeq & (1,5)(2,6)(3,7)(4)(8) & \simeq & (((1,3)(2,4),(1,3)(2)(4)),(1_{S})(2_{S})),
\end{array}
\]
\setlength{\arraycolsep}{\myArraycolsep}where the first identification
refers to the action on $Z$. The \emph{zig-zag} Dirichlet loop of
$\GraphOfSubstituent$ is incorporated into the $G^{2}$-part of the
image of $\AdjacencyMatrix_{\Substitution}^{zig-zag}$ as $(1,3)(2,4)$,
which corresponds to $-I_{2}$ on the level of matrices. In this example,
$-I_{2}\in G$, but $-I_{\NumberOfVertices}\notin G$ is possible
in general. In any case, $\SubstitutedGraph G{\GroupOfSubstituent}$
is isomorphic to a subgroup of $\langle G,-I_{\NumberOfVertices}\rangle\wr_{Y}\mathrm{Sym}(Y)$.

\section{Examples of transplantable pairs\label{sec:Examples_of_Trans_Pairs}}

\subsection{Treelike pairs and planar domains\label{sec:TransplantableTrees}}

Since Kac's original question~\cite{Kac1966} aimed at planar domains
with pure boundary conditions, transplantable treelike graphs with
uniform loop signs have been studied the most~\cite{OkadaShudo2001,Thas2006a,Thas2006b,Thas2006,Thas2007}.
Note that the Substitution Theorem implies that there exist infinitely
many such pairs, which answers a question raised in~\cite{GiraudThas2010}.
However, it could be the case that all such pairs arise from the ones
in~\cite{BuserConwayDoyleSemmler1994} by substitution. Okada and
Shudo~\cite{OkadaShudo2001} confirmed the known pairs up to $13$
vertices per graph and just missed the $7$ pairs with $14$ vertices
per graph that arise from the $3$ known ones with $7$ vertices per
graph by substitution. Figure~\ref{fig:TransplantableGraphsWithEvenNumber}
shows one of these pairs with Dirichlet loops. Using Dualization and
Theorem~\ref{thm:Transplantability_is_Equivalent_Inductions}, one
sees that transplantable treelike graphs with uniform loop signs come
from the original Sunada method~\cite{Sunada1985}, that is, from
Gassmann triples $(G,H,\OnSecond H)$. For each of the $7$ pairs
mentioned above, $G$ is isomorphic to $PSL(3,2)\times PSL(3,2)$.
Its permutation action on the vertices is imprimitive, in particular,
it is not 2-transitive, which gives a counterexample to a conjecture
expressed in~\cite{SchillewaertThas2011}.

\begin{figure}
\noindent \begin{centering}
\psfragBodySmall\psfrag{A}{} \psfrag{B}{}\hfill{}\includegraphics[scale=0.3]{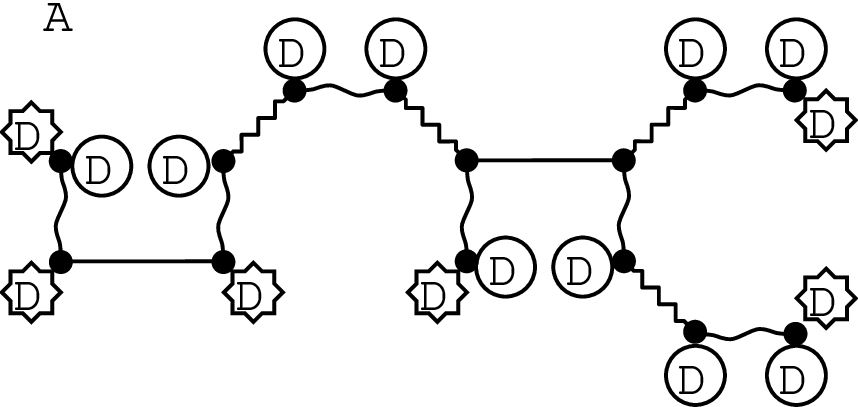}\hfill{}\includegraphics[scale=0.3]{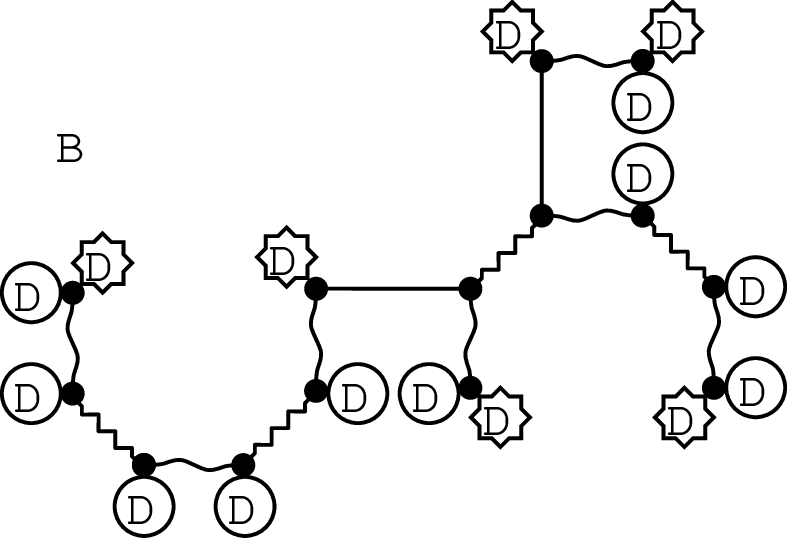}\hfill{}
\par\end{centering}

\caption{Treelike pair obtained from the one underlying the Gorden-Webb-Wolpert
drums by substitution.\label{fig:TransplantableGraphsWithEvenNumber}}
\end{figure}

\begin{figure}
\noindent \hfill{}\subfloat[Transplantable manifolds\label{fig:FourTransplantableDomains}]{\noindent \begin{centering}
\begin{minipage}[b][21mm]{55mm}%
\noindent %
\begin{tabular}{ccc}
\noalign{\vskip2mm}
$B$ & $M_{r}$ & $M_{l}$\tabularnewline
\includegraphics[scale=0.45]{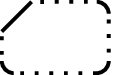} & \includegraphics[scale=0.45]{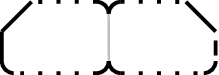} & \includegraphics[scale=0.45]{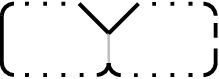}\tabularnewline[4mm]
$M_{b}$ & $M_{br}=M_{rb}$ & $M_{bl}=M_{lb}$\tabularnewline
\includegraphics[scale=0.45]{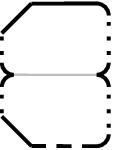} & \includegraphics[scale=0.45]{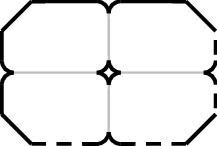} & \includegraphics[scale=0.45]{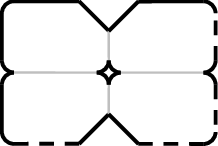}\tabularnewline[1.5mm]
$M_{t}$ & $M_{tr}=M_{rt}$ & $M_{tl}=M_{lt}$\tabularnewline
\includegraphics[scale=0.45]{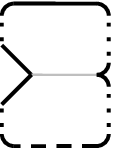} & \includegraphics[scale=0.45]{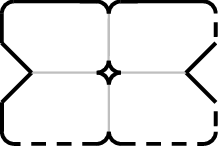} & \includegraphics[scale=0.45]{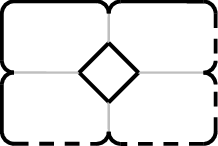}\tabularnewline
\end{tabular}%
\end{minipage}
\par\end{centering}

\centering{}}\hspace*{\fill}\subfloat[Transplantable graphs\label{fig:FourTransplantableGraphs}]{\noindent \begin{centering}
\begin{minipage}[b][24mm]{50mm}%
\noindent \psfragBodySmall%
\begin{tabular}{ccc}
 & $\Graph_{r}$\hspace*{\fill} & $\Graph_{l}$\hspace*{\fill}\tabularnewline[-2mm]
\includegraphics[scale=0.3]{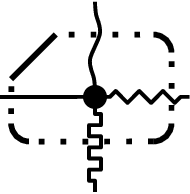} & \includegraphics[scale=0.3]{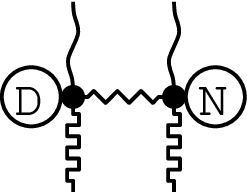} & \includegraphics[scale=0.3]{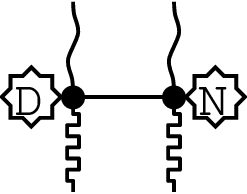}\tabularnewline[2mm]
$\Graph_{b}$ & $\Graph_{br}=\Graph_{rb}$ & $\Graph_{bl}=\Graph_{lb}$\tabularnewline
\includegraphics[scale=0.3]{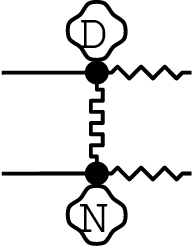} & \includegraphics[scale=0.3]{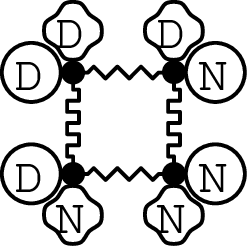} & \includegraphics[scale=0.3]{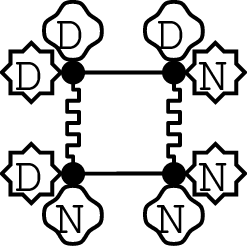}\tabularnewline[1mm]
$\Graph_{t}$ & $\Graph_{tr}=\Graph_{rt}$ & $\Graph_{tl}=\Graph_{lt}$\tabularnewline
\includegraphics[scale=0.3]{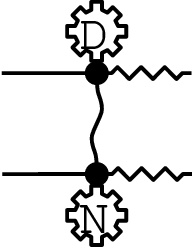} & \includegraphics[scale=0.3]{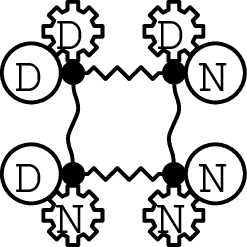} & \includegraphics[scale=0.3]{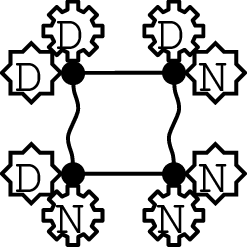}\tabularnewline
\end{tabular}%
\end{minipage}
\par\end{centering}

}\hfill{}

\caption{Transplantable tuple obtained as in~\cite{LevitinParnovskiPolterovich2006}.
\label{fig:FourTransplantable_Tuples}}
\end{figure}

\subsection{Transplantable tuples}

We show how crossings and substitutions give rise to transplantable
tuples, that is, tuples of pairwise transplantable and therefore isospectral
manifolds. Assume first that $\Graph$ and $\OnSecond{\Graph}$ are
transplantable graphs without Dirichlet loops. Since every graph is
trivially transplantable to itself, we can apply the crossing method
to the pairs $(\Graph,\Graph)$, $(\Graph,\OnSecond{\Graph})$, $(\OnSecond{\Graph},\OnSecond{\Graph})$
and $(\OnSecond{\Graph},\Graph)$ to obtain the transplantable pairs
$(\Crossing{\Graph}{\Graph},\Crossing{\Graph}{\OnSecond{\Graph}})$,
$(\Crossing{\Graph}{\OnSecond{\Graph}},\Crossing{\OnSecond{\Graph}}{\OnSecond{\Graph}})$
and $(\Crossing{\OnSecond{\Graph}}{\OnSecond{\Graph}},\Crossing{\OnSecond{\Graph}}{\Graph})$.
Using the transplantable tuple $(\Crossing{\Graph}{\Graph},\Crossing{\Graph}{\OnSecond{\Graph}},\Crossing{\OnSecond{\Graph}}{\OnSecond{\Graph}},\Crossing{\OnSecond{\Graph}}{\Graph})$
and the above pairs, we get $8$ pairwise transplantable graphs. This
process can be continued inductively.

Figure~\ref{fig:FourTransplantableDomains} indicates a more general
method that is based on \cite{LevitinParnovskiPolterovich2006} and
allows for mixed boundary conditions. We can regard $M_{br}$ as being
built out of copies of $M_{b}$ which gives transplantability with
$M_{bl}$. On the other hand, we can take $M_{r}$ as the building
block to show that $M_{br}$ and $M_{tr}$ are transplantable. Similar
arguments show that $(M_{br},M_{bl},M_{tr},M_{tl})$ is a transplantable
tuple.

In the same way, there can be several ways in which a given loop-signed
graph can be interpreted as a substituted graph as indicated in Figure~\ref{fig:FourTransplantableGraphs}.
We generalize this idea. Let $\Graph_{1}$ and $\Graph_{2}$ be loop-signed
graphs with $\NumberOfVertices_{1}$, respectively $\NumberOfVertices_{2}$,
vertices and adjacency matrices $(\AdjacencyMatrix_{1}^{\Colour})_{\Colour=1}^{\NumberOfColours_{1}}$,
respectively $(\AdjacencyMatrix_{2}^{\Colour})_{\Colour=1}^{\NumberOfColours_{2}}$.
Let $\Graph_{1,S}$ be the graph that is obtained by adding a Neumann
loop of each edge color of $\Graph_{2}$ to each vertex of $\Graph_{1}$,
that is, $\Graph_{1,S}$ has adjacency matrices $(\AdjacencyMatrix_{1}^{\Colour})_{\Colour=1}^{\NumberOfColours_{1}+\NumberOfColours_{2}}$,
where $\AdjacencyMatrix_{1}^{\NumberOfColours_{1}+\Colour}=I_{\NumberOfVertices_{1}}$
for $\Colour=1,2,\ldots,\NumberOfColours_{2}$. In Figure~\ref{fig:FourTransplantableGraphs},
the added loops are indicated by outward-pointing edges. In order
to substitute $\Graph_{1,S}$ into $\Graph_{2}$, we set $\LoopIndices^{\Colour}=\varnothing$
for $\Colour=1,2,\ldots,\NumberOfColours_{1}$, that is, $\AdjacencyMatrix_{1*}^{\Colour}=\AdjacencyMatrix_{1}^{\Colour}$,
and $\LoopIndices^{\NumberOfColours_{1}+\Colour}=\LoopIndices_{\Colour}^{\NumberOfColours_{1}+\Colour}=\{1,2,\ldots,\NumberOfVertices_{1}\}$
for $\Colour=1,2,\ldots,\NumberOfColours_{2}$, that is, $\AdjacencyMatrix_{1*}^{\NumberOfColours_{1}+\Colour}=0$.
According to Definition~\ref{def:TheAdjacencyMatrixOfSubstitutedGraph},
the adjacency matrices of $\SubstitutedGraph{\Graph_{2}}{\Graph_{1,S}}$
read
\begin{eqnarray*}
\AdjacencyMatrix_{1\Substitution2}^{\Colour} & = & \AdjacencyMatrix_{1}^{\Colour}\Cross I_{\NumberOfVertices_{2}}\quad\text{for }\Colour=1,2,\ldots,\NumberOfColours_{1}\mbox{ and}\\
\AdjacencyMatrix_{1\Substitution2}^{\NumberOfColours_{1}+\Colour} & = & I_{\NumberOfVertices_{1}}\Cross\AdjacencyMatrix_{2}^{\Colour}\quad\text{for }\Colour=1,2,\ldots,\NumberOfColours_{2}.
\end{eqnarray*}
Similarly, let $\Graph_{2,S}$ be the graph with adjacency matrices
$(\AdjacencyMatrix_{2}^{\Colour-\NumberOfColours_{1}})_{\Colour=1}^{\NumberOfColours_{1}+\NumberOfColours_{2}}$,
where $\AdjacencyMatrix_{2}^{\Colour-\NumberOfColours_{1}}=I_{\NumberOfVertices_{2}}$
for $\Colour=1,2,\ldots,\NumberOfColours_{1}$. If we substitute $\Graph_{2,S}$
into $\Graph_{1}$ as above, we obtain $\SubstitutedGraph{\Graph_{1}}{\Graph_{2,S}}$
with adjacency matrices $\AdjacencyMatrix_{2\Substitution1}^{\Colour}=I_{\NumberOfVertices_{2}}\Cross\AdjacencyMatrix_{1}^{\Colour}$
for $\Colour=1,2,\ldots,\NumberOfColours_{1}$, and $\AdjacencyMatrix_{2\Substitution1}^{\NumberOfColours_{1}+\Colour}=\AdjacencyMatrix_{2}^{\Colour}\Cross I_{\NumberOfVertices_{1}}$
for $\Colour=1,2,\ldots,\NumberOfColours_{2}$. Lemma~\ref{lem:Commutation_Relation_of_Kronecker_Product}
implies that $\SubstitutedGraph{\Graph_{2}}{\Graph_{1,S}}$ and $\SubstitutedGraph{\Graph_{1}}{\Graph_{2,S}}$
are isomorphic, which is indicated in Figure~\ref{fig:FourTransplantableGraphs}
where $\Graph_{br}=\SubstitutedGraph{\Graph_{r}}{\Graph_{b}}$ and
$\Graph_{rb}=\SubstitutedGraph{\Graph_{b}}{\Graph_{r}}$.

We use this result to generate transplantable tuples. Let $(\Graph_{1},\OnSecond{\Graph}_{1})$
and $(\Graph_{2},\OnSecond{\Graph}_{2})$ be pairs of transplantable
loop-signed graphs. Using the above notation, we see that $(\SubstitutedGraph{\Graph_{2}}{\Graph_{1,S}},\SubstitutedGraph{\Graph_{1}}{\Graph_{2,S}})$,
$(\SubstitutedGraph{\OnSecond{\Graph}_{2}}{\Graph_{1,S}},\SubstitutedGraph{\Graph_{1}}{\OnSecond{\Graph}_{2,S}})$,
$(\SubstitutedGraph{\Graph_{2}}{\OnSecond{\Graph}_{1,S}},\SubstitutedGraph{\OnSecond{\Graph}_{1}}{\Graph_{2,S}})$
and $(\SubstitutedGraph{\OnSecond{\Graph}_{2}}{\OnSecond{\Graph}_{1,S}},\SubstitutedGraph{\OnSecond{\Graph}_{1}}{\OnSecond{\Graph}_{2,S}})$
are pairs of isomorphic graphs. The Substitution Theorem shows that
$(\SubstitutedGraph{\Graph_{2}}{\Graph_{1,S}},\SubstitutedGraph{\OnSecond{\Graph}_{2}}{\Graph_{1,S}})$,
$(\SubstitutedGraph{\Graph_{2}}{\OnSecond{\Graph}_{1,S}},\SubstitutedGraph{\OnSecond{\Graph}_{2}}{\OnSecond{\Graph}_{1,S}})$,
$(\SubstitutedGraph{\Graph_{1}}{\Graph_{2,S}},\SubstitutedGraph{\OnSecond{\Graph}_{1}}{\Graph_{2,S}})$
and $(\SubstitutedGraph{\Graph_{1}}{\OnSecond{\Graph}_{2,S}},\SubstitutedGraph{\OnSecond{\Graph}_{1}}{\OnSecond{\Graph}_{2,S}})$
are pairs of transplantable graphs. Hence, $(\SubstitutedGraph{\Graph_{2}}{\Graph_{1,S}},\SubstitutedGraph{\OnSecond{\Graph}_{2}}{\Graph_{1,S}},\SubstitutedGraph{\OnSecond{\Graph}_{2}}{\OnSecond{\Graph}_{1,S}},\SubstitutedGraph{\Graph_{2}}{\OnSecond{\Graph}_{1,S}})$
is a transplantable tuple, which, together with another transplantable
pair $(\Graph_{3},\OnSecond{\Graph}_{3})$, can be used to generate
$8$ pairwise transplantable graphs. We proceed inductively, where
in each step we could use the same transplantable pair $(\Graph_{i},\OnSecond{\Graph}_{i})=(\Graph_{1},\OnSecond{\Graph}_{1})$,
for instance, $(\Graph_{r},\Graph_{l})$ and $(\Graph_{b},\Graph_{t})$
in Figure~\ref{fig:FourTransplantableGraphs} are isomorphic up to
a relabelling of their edge colors.

\subsection{The algorithm\label{sub:TheAlgorithm}}

In order to search for transplantable pairs systematically, one first
creates one representative of each isomorphism class of loop-signed
graphs with a given number of vertices and edge colors and then sorts
them according to finitely many expressions of the form (\ref{eq:FiniteTraceCondition}),
which is justified by the following proposition.
\begin{prop}
\label{pro:BoundOnNumberOfSufficientWords}If $(\AdjacencyMatrix^{\Colour})_{\Colour=1}^{\NumberOfColours}$
and $(\OnSecond{\AdjacencyMatrix}^{\Colour})_{\Colour=1}^{\NumberOfColours}$
are the adjacency matrices of two loop-signed graphs each of which
has $\NumberOfVertices$ vertices, then 
\begin{equation}
\Trace(\OnSecond{\AdjacencyMatrix}^{\Colour_{1}}\OnSecond{\AdjacencyMatrix}^{\Colour_{2}}\cdots\OnSecond{\AdjacencyMatrix}^{\Colour_{l}})=\Trace(\AdjacencyMatrix^{\Colour_{1}}\AdjacencyMatrix^{\Colour_{2}}\cdots\AdjacencyMatrix^{\Colour_{l}})\label{eq:FiniteTraceCondition}
\end{equation}
holds for all words $\Colour_{1}\Colour_{2}\ldots\Colour_{l}$ in
the edge colors if it holds for all words with length $l\leq(2\NumberOfVertices)!$.\end{prop}
\begin{proof}
Recall that $G=\langle\AdjacencyMatrix^{1},\AdjacencyMatrix^{2},\ldots,\AdjacencyMatrix^{\NumberOfColours}\rangle$
can be identified with a subgroup of the symmetric group on $2\NumberOfVertices$
elements. If $L>(2\NumberOfVertices)!$, then any word $\Colour_{1}\Colour_{2}\ldots\Colour_{L}$
contains a subword $\Colour_{k+1}\Colour_{k+2}\ldots\Colour_{k+l}$
with $0<l<L$ such that $\AdjacencyMatrix^{\Colour_{k+1}}\AdjacencyMatrix^{\Colour_{k+2}}\cdots\AdjacencyMatrix^{\Colour_{k+l}}=I_{\NumberOfVertices}$.
Assuming that~(\ref{eq:FiniteTraceCondition}) holds for words up
to length $L-1$, we see that $\OnSecond{\AdjacencyMatrix}^{\Colour_{k+1}}\OnSecond{\AdjacencyMatrix}^{\Colour_{k+2}}\cdots\OnSecond{\AdjacencyMatrix}^{\Colour_{k+l}}=I_{\NumberOfVertices}$.
Hence, $\Colour_{1}\Colour_{2}\ldots\Colour_{L}$ satisfies~(\ref{eq:FiniteTraceCondition})
if $\Colour_{1}\Colour_{2}\ldots\Colour_{k}\Colour_{k+l+1}\Colour_{k+l+2}\ldots\Colour_{L}$
does, and the statement follows by induction.
\end{proof}
The graph generating part of the algorithm can be dealt with by considering
the sequences of vertices that appear when one performs a walk on
the vertices of a loop-signed graph where the edges are used in the
order of their color. Doyle~\cite{Doyle} suggested an algorithm
to tackle the sorting part, which was motivated by the discovery of
the graphs in Figure~\ref{fig:IsospecNonTrans}. Despite not being
transplantable, these pairs are strongly isospectral in the sense
that if $(\AdjacencyMatrix^{\Colour})_{\Colour=1}^{\NumberOfColours}$
and $(\OnSecond{\AdjacencyMatrix}^{\Colour})_{\Colour=1}^{\NumberOfColours}$
denote their adjacency matrices, then for any $z_{1},z_{2},\ldots,z_{\NumberOfColours}\in\mathbb{C}$,
\[
\det\biggl(\sum_{\Colour=1}^{\NumberOfColours}z_{\Colour}\AdjacencyMatrix^{\Colour}\biggr)=\det\biggl(\sum_{\Colour=1}^{\NumberOfColours}z_{\Colour}\OnSecond{\AdjacencyMatrix}^{\Colour}\biggr).
\]
The graphs in Figure~\ref{fig:IsospecNonTrans6V} only differ in
the order in which the edge colors \emph{straight} and \emph{zig-zag}
appear along the outer 6-cycles, which suggests to interpret transplantability
as a non-commutative version of strong isospectrality. More precisely,
if $(\AdjacencyMatrix^{\Colour})_{\Colour=1}^{\NumberOfColours}$
and $(\OnSecond{\AdjacencyMatrix}^{\Colour})_{\Colour=1}^{\NumberOfColours}$
are the adjacency matrices of a pair of transplantable graphs, then
we obtain that for any square matrices $(Z^{\Colour})_{\Colour=1}^{\NumberOfColours}$
of equal size and any $k\in\mathbb{N}$ 
\begin{eqnarray}
\Trace\Biggl(\Biggl(\sum_{\Colour=1}^{\NumberOfColours}\AdjacencyMatrix^{\Colour}\Cross Z^{\Colour}\Biggr)^{k}\Biggr) & = & \Trace\Biggl(\sum_{1\leq\Colour_{1},\Colour_{2},\ldots,\Colour_{k}\leq\NumberOfColours}\Biggl(\prod_{i=1}^{k}\AdjacencyMatrix^{\Colour_{i}}\Cross Z^{\Colour_{i}}\Biggr)\Biggr)\nonumber \\
 & = & \sum_{1\leq\Colour_{1},\Colour_{2},\ldots,\Colour_{k}\leq\NumberOfColours}\Trace\Biggl(\prod_{i=1}^{k}\AdjacencyMatrix^{\Colour_{i}}\Biggr)\Trace\Biggl(\prod_{i=1}^{k}Z^{\Colour_{i}}\Biggr)\label{eq:TransTestExpanded}\\
 & = & \Trace\Biggl(\Biggl(\sum_{\Colour=1}^{\NumberOfColours}\OnSecond{\AdjacencyMatrix}^{\Colour}\Cross Z^{\Colour}\Biggr)^{k}\Biggr).\label{eq:TransTestWithNonCommMats}
\end{eqnarray}
Note that (\ref{eq:TransTestExpanded}) contains the traces of all
products of $k$ adjacency matrices. If the scalars $\Trace(\prod_{i=1}^{k}Z^{\Colour_{i}})\in\mathbb{C}$
were linearly independent over $\mathbb{Q}$ up to the cyclic invariance
of the trace, then one could read off the integer factors $\Trace(\prod_{i=1}^{k}\AdjacencyMatrix^{\Colour_{i}})$.
This is not the case as can be seen by regarding the expressions $\Trace(\prod_{i=1}^{k}Z^{\Colour_{i}})$
as homogeneous polynomials in the entries of $(Z^{\Colour})_{\Colour=1}^{\NumberOfColours}$.
For instance, the graphs in Figure~\ref{fig:PairPassingTest} satisfy
(\ref{eq:TransTestWithNonCommMats}) for any $2\times2$ matrices
$(Z^{\Colour})_{\Colour=1}^{3}$ despite not being transplantable.

\begin{figure}
\noindent \begin{centering}
\hfill{}\subfloat[Dirichlet\label{fig:IsospecNonTrans6V}]{\noindent \centering{}%
\begin{minipage}[b]{35mm}%
\noindent \begin{center}
\begin{tabular}{c}
\includegraphics[scale=0.3]{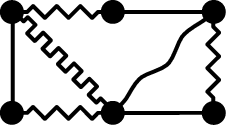}\tabularnewline
\includegraphics[scale=0.3]{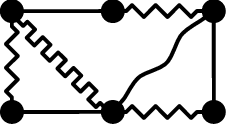}\tabularnewline
\end{tabular}
\par\end{center}%
\end{minipage}}\hfill{}\subfloat[Dirichlet or Neumann\label{fig:IsospecNonTrans9V}]{\noindent \centering{}%
\begin{minipage}[b]{55mm}%
\noindent \begin{center}
\begin{tabular}{c}
\includegraphics[scale=0.3]{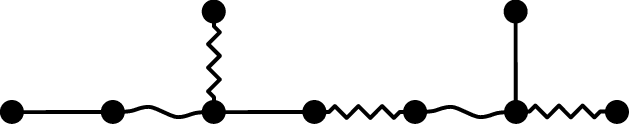}\tabularnewline
\includegraphics[scale=0.3]{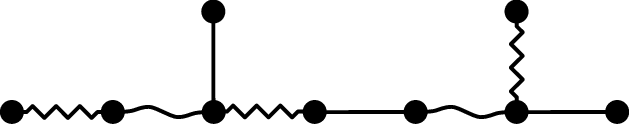}\tabularnewline
\end{tabular}
\par\end{center}%
\end{minipage}}\hfill{}\subfloat[Dirichlet or Neumann\label{fig:PairPassingTest}]{\noindent \begin{centering}
\begin{minipage}[b]{0.35\columnwidth}%
\noindent \begin{center}
\begin{tabular}{c}
\includegraphics[scale=0.3]{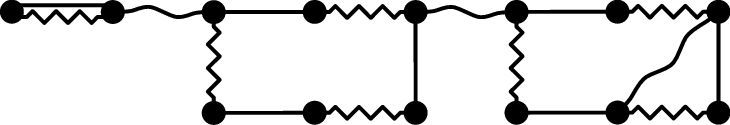}\tabularnewline
\includegraphics[scale=0.3]{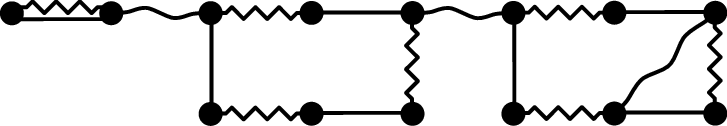}\tabularnewline
\end{tabular}
\par\end{center}%
\end{minipage}
\par\end{centering}

}\hfill{}
\par\end{centering}

\caption{Non-transplantable but strongly isospectral graphs when given uniform
loop signs as indicated. The last pair satisfies (\ref{eq:TransTestWithNonCommMats})
for any $2\times2$ matrices $(Z^{\Colour})_{\Colour=1}^{3}$.\label{fig:IsospecNonTrans}}
\end{figure}

After sorting out pairs of possibly transplantable graphs via (\ref{eq:TransTestWithNonCommMats}),
the existence of an intertwining transplantation $T$ is determined
by considering the following action of the adjacency matrices on the
entries of $T$. If $(\AdjacencyMatrix^{\Colour})_{\Colour=1}^{\NumberOfColours}$
and $(\OnSecond{\AdjacencyMatrix}^{\Colour})_{\Colour=1}^{\NumberOfColours}$
are $\NumberOfVertices\times\NumberOfVertices$ adjacency matrices
and if $\AdjacencyMatrix_{jl}^{\Colour}\neq0$ as well as $\OnSecond{\AdjacencyMatrix}_{ik}^{\Colour}\neq0$,
then any intertwining $T$ satisfies
\begin{equation}
T_{ij}\AdjacencyMatrix_{jl}^{\Colour}=(T\AdjacencyMatrix^{\Colour})_{il}=(\OnSecond{\AdjacencyMatrix}^{\Colour}T)_{il}=\OnSecond{\AdjacencyMatrix}_{ik}^{\Colour}T_{kl}.\label{eq:FinalTransTest}
\end{equation}
We therefore consider the action of the group $\langle(\AdjacencyMatrix^{\Colour},\OnSecond{\AdjacencyMatrix}^{\Colour})\rangle_{\Colour=1}^{\NumberOfColours}$
on the set $\{1,2,\ldots,\NumberOfVertices\}^{2}\times\{-1,1\}$ given
by 
\[
(\AdjacencyMatrix^{\Colour},\OnSecond{\AdjacencyMatrix}^{\Colour})((i,j),\pm1)=((k,l),\pm\AdjacencyMatrix_{jl}^{\Colour}\OnSecond{\AdjacencyMatrix}_{ik}^{\Colour}),
\]
where $(k,l)$ is the unique pair such that $\AdjacencyMatrix_{jl}^{\Colour}\neq0$
and $\OnSecond{\AdjacencyMatrix}_{ik}^{\Colour}\neq0$. If an orbit
contains both $((i,j),+1)$ and $((i,j),-1)$ for some $i$ and $j$,
then all entries of $T$ with indices in that orbit must be zero.
At the end, one checks whether the remaining orbits can be assigned
to real numbers such that the resulting matrix $T$ becomes invertible.

\subsection{Pairs with 2 edge colors\label{sub:BBlocksWith2ReflSegs}}

The method in~\cite{LevitinParnovskiPolterovich2006,JakobsonLevitinNadirashviliPolterovich2006}
corresponds to the case of connected tiled manifolds with building
blocks that have $2$ reflecting faces. In order to classify such
pairs, let $\Graph$ and $\OnSecond{\Graph}$ be transplantable connected
loop-signed graphs with $\NumberOfVertices$ vertices and adjacency
matrices $(\AdjacencyMatrix^{1},\AdjacencyMatrix^{2})$ and $(\OnSecond{\AdjacencyMatrix}^{1},\OnSecond{\AdjacencyMatrix}^{2})$,
respectively. Recall that transplantability is equivalent to
\begin{equation}
\Trace(\OnSecond{\AdjacencyMatrix}^{\Colour_{1}}\OnSecond{\AdjacencyMatrix}^{\Colour_{2}}\cdots\OnSecond{\AdjacencyMatrix}^{\Colour_{l}})=\Trace(\AdjacencyMatrix^{\Colour_{1}}\AdjacencyMatrix^{\Colour_{2}}\cdots\AdjacencyMatrix^{\Colour_{l}})\label{eq:Trace_Condition_In_Context_of_Two_EdgeColours}
\end{equation}
for all sequences $\Colour_{1}\Colour_{2}\ldots\Colour_{l}$ of edge
colors. If $\NumberOfVertices$ is odd, it is easy to see that $\Trace(\AdjacencyMatrix^{1})$
and $\Trace(\AdjacencyMatrix^{2})$ determine $\Graph$ up to isomorphism.
If $\NumberOfVertices$ is even, then $\Trace((\AdjacencyMatrix^{1}\AdjacencyMatrix^{2})^{\NumberOfVertices/2})=\NumberOfVertices$
if and only if $\Graph$ is a $\NumberOfVertices$-cycle. If, on the
other hand, $\Graph$ has two $\Colour$-colored loops, then their
signs are determined by $\Trace(\AdjacencyMatrix^{\Colour})$ up to
isomorphism of $\Graph$. The only case in which $\Graph$ and $\OnSecond{\Graph}$
are non-isomorphic and could be transplantable is shown in Figure~\ref{fig:TransplantablePairsWithTwoEdgeColours},
where $\Trace(\AdjacencyMatrix^{1})=\Trace(\AdjacencyMatrix^{2})=0$
and $\OnSecond{\AdjacencyMatrix}^{1}=\AdjacencyMatrix^{2}$ as well
as $\OnSecond{\AdjacencyMatrix}^{2}=\AdjacencyMatrix^{1}$. We verify
their transplantability. Since adjacency matrices are self-inverse,
it suffices to consider alternating sequences in~(\ref{eq:Trace_Condition_In_Context_of_Two_EdgeColours}),
that is, $\Colour_{i}\neq\Colour_{i+1}$ for $1\leq i\leq l-1$. If
$l$ is odd, then $\Colour_{1}=\Colour_{l}$. Using the cyclic invariance
of the trace, we can reduce to the sequence $\Colour_{2}\ldots\Colour_{l-1}$,
and (\ref{eq:Trace_Condition_In_Context_of_Two_EdgeColours}) follows
by induction. If $l$ is even, then (\ref{eq:Trace_Condition_In_Context_of_Two_EdgeColours})
follows directly from
\[
\Trace((\AdjacencyMatrix^{1}\AdjacencyMatrix^{2})^{l/2})=\Trace((\AdjacencyMatrix^{2}\AdjacencyMatrix^{1})^{l/2})=\Trace((\OnSecond{\AdjacencyMatrix}^{1}\OnSecond{\AdjacencyMatrix}^{2})^{l/2})=\Trace((\OnSecond{\AdjacencyMatrix}^{2}\OnSecond{\AdjacencyMatrix}^{1})^{l/2}).
\]
The preceding arguments provide an alternative proof of the extension
of~\cite[Theorem 4.2]{LevitinParnovskiPolterovich2006} given in~\cite{BandParzanchevskiBen-Shach2009}.

\begin{figure}
\noindent \begin{centering}
\psfragBody%
\begin{tabular}{cc}
\includegraphics[scale=0.42]{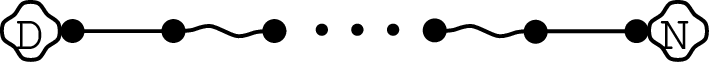} & \includegraphics[scale=0.42]{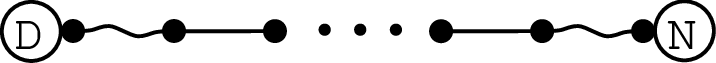}\tabularnewline
\end{tabular}
\par\end{centering}

\caption{Transplantable non-isomorphic connected loop-signed graphs with $2$
edge colors.\label{fig:TransplantablePairsWithTwoEdgeColours}}
\end{figure}

\subsection{Pairs with 3 edge colors\label{sub:PairsWithThreeReflectingSegmentsPerBuildingBlock}}

Table~\ref{tab:ResultsThreeEdgeColours} and Table~\ref{tab:ResultsThreeEdgeColoursHomo}
contain the results of the computer-aided search for transplantable
pairs with $3$ edge colors. For instance, there are $40$ isomorphism
classes of connected loop-signed graphs with $3$ edge colors and
$2$ vertices, among which there are $9$ transplantable pairs. If
we identify pairs which differ only in a renumbering of their edge
colors, then we obtain $3$ classes, which arise from the pair with
only $2$ edge colors by adding or copying of an edge color. As a
consequence of the Substitution Theorem, there are transplantable
pairs for all even numbers of vertices per graph. Moreover, each transplantable
tuple of length $l$ contributes $l(l-1)/2$ pairs.

The $32$ classes of pairs with $7$ vertices per graph are shown
in Figure~\ref{fig:Appendix-7-vertices-per-graph} in the appendix.
Their  loopless versions first appeared in~\cite{OkadaShudo2001}.
In particular, there exist $10$ versions of isospectral broken Gordon-Webb-Wolpert
drums~\cite{GordonWebbWolpert1992} shown in Figure~\ref{fig:IsospectralBrokenDrumsShapedLikeGordon}.
Pair $3$ was discovered by Parzanchevski and Band~\cite{ParzanchevskiBand2010}
using the sign representations of two $S_{4}$-subgroups of $PSL(3,2)$.
Isospectrality had been conjectured for pairs $5$ and $6$ by Driscoll
and Gottlieb~\cite{DriscollGottlieb2003}, who computed the respective
first $30$ eigenvalues of these manifolds numerically to high precision.

With regard to Table~\ref{tab:ResultsThreeEdgeColoursHomo}, recall
that transplantable pairs without Dirichlet loops originate from Gassmann
triples. For instance, the $19$ Neumann pairs with $11$ vertices
per graph come from $PSL(2,11)$. According to~\cite{Doyle}, they
were first discovered by John Conway. In accordance with~\cite{BosmaSmit2002},
there are no such pairs with $9$ or $10$ vertices per graph.

\begin{figure}
\noindent \begin{centering}
\newcommand{\vraise}{14mm}\newcommand{\neghspace}{-29mm} \newcommand{\poshspace}{20mm}\psfrag{1}{} \psfrag{2}{} \psfrag{3}{} \psfrag{4}{} \psfrag{5}{} \psfrag{6}{} \psfrag{7}{}%
\begin{tabular}{lc>{\raggedright}p{0.5mm}cc}
\includegraphics[scale=0.31]{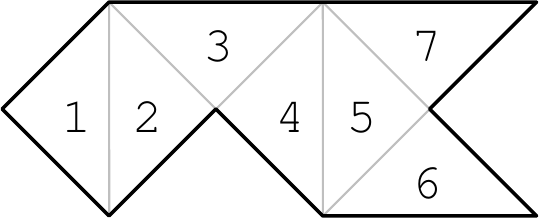}\raisebox{\vraise}{\hspace{\neghspace}1/2\hspace{\poshspace}} & \includegraphics[scale=0.31]{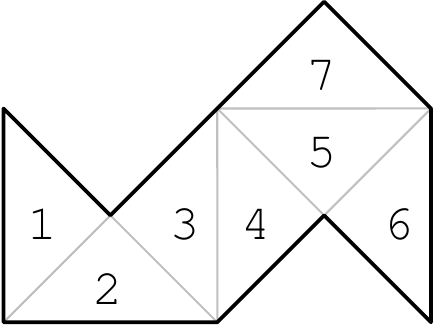} &  & \includegraphics[scale=0.31]{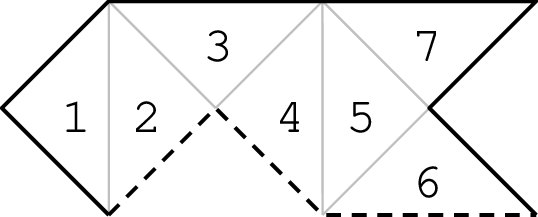}\raisebox{\vraise}{\hspace{\neghspace}3/4\hspace{\poshspace}} & \includegraphics[scale=0.31]{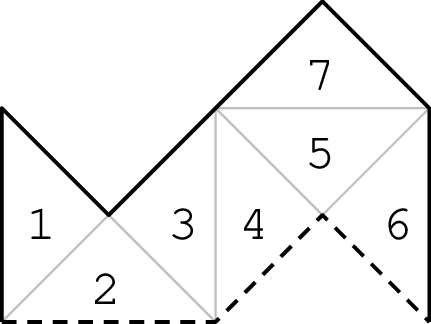}\tabularnewline
\includegraphics[scale=0.31]{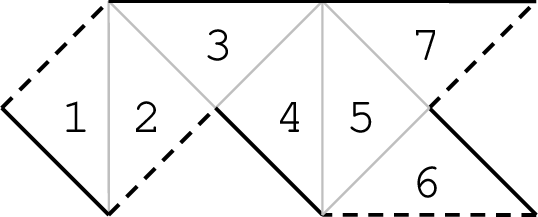}\raisebox{\vraise}{\hspace{\neghspace}5/6\hspace{\poshspace}} & \includegraphics[scale=0.31]{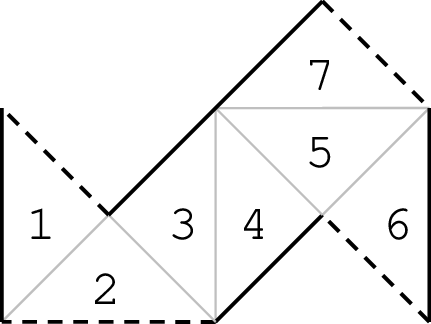} &  & \includegraphics[scale=0.31]{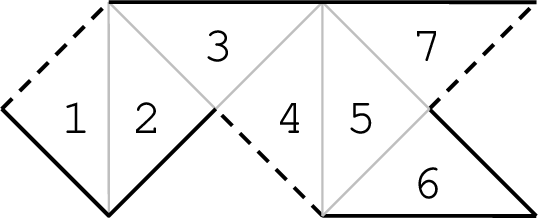}\raisebox{\vraise}{\hspace{\neghspace}7/8\hspace{\poshspace}} & \includegraphics[scale=0.31]{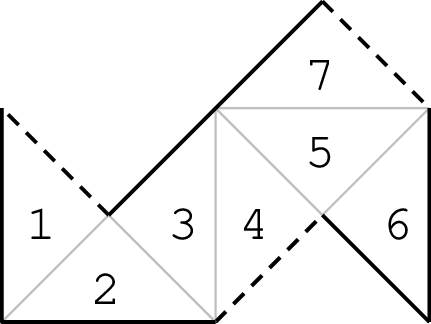}\tabularnewline
\includegraphics[scale=0.31]{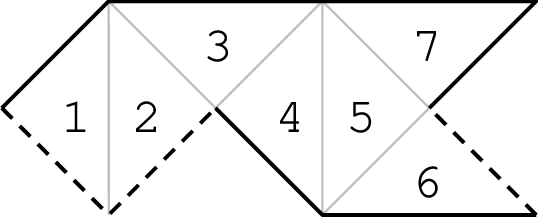}\raisebox{\vraise}{\hspace{\neghspace}9/10\hspace{\poshspace}} & \includegraphics[scale=0.31]{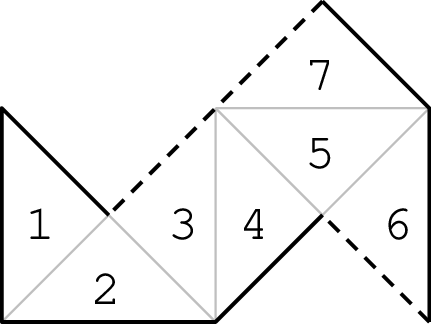} &  & \includegraphics[scale=0.31]{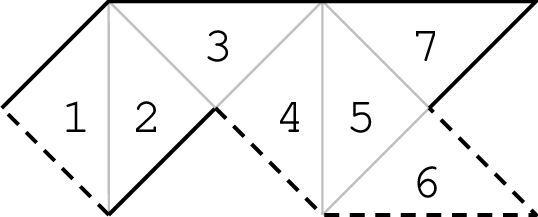}\raisebox{\vraise}{\hspace{\neghspace}11/12\hspace{\poshspace}} & \includegraphics[scale=0.31]{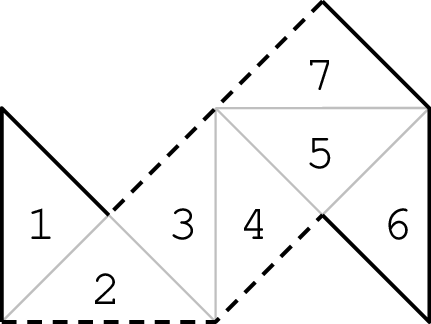}\tabularnewline
\end{tabular}
\par\end{centering}

\caption{Isospectral broken Gordon-Webb-Wolpert drums, where the first (second)
numbers refer to the case in which solid lines indicate Dirichlet
(Neumann) and dashed ones Neumann (Dirichlet) boundary conditions.\label{fig:IsospectralBrokenDrumsShapedLikeGordon}}
\end{figure}

\begin{table}
\noindent \begin{centering}
\begin{tabular}{ccccccc}
 & \multicolumn{2}{c}{Loop-signed} & \multicolumn{2}{c}{Transplantable} & \multicolumn{2}{c}{Transplantable}\tabularnewline
$V$ & graphs & (treelike) & pairs & (treelike) & classes & (treelike)\tabularnewline
\hline 
$2$ & $40$ & $(30)$ & $9$ & $(6)$ & $3$ & $(2)$\tabularnewline
$3$ & $128$ & $(96)$ & $0$ & $(0)$ & $0$ & $(0)$\tabularnewline
$4$ & $737$ & $(472)$ & $118$ & $(64)$ & $28$ & $(18)$\tabularnewline
$5$ & $3\,848$  & $(2\,304)$ & $0$ & $(0)$ & $0$ & $(0)$\tabularnewline
$6$ & $24\,360$  & $(12\,792)$ & $957$ & $(294)$ & $176$ & $(56)$\tabularnewline
$7$ & $156\,480$ & $(73\,216)$ & $112$ & $(112)$ & $32$ & $(32)$\tabularnewline
$8$ & $1\,076\,984$ & $(439\,968)$ & $13\,349$ & $(2\,112)$ & $2\,343$ & $(375)$\tabularnewline
$9$ & $7\,625\,040$ & $(2\,715\,648)$ & $0$ & $(0)$ & $0$ & $(0)$\tabularnewline
\end{tabular}
\par\end{centering}

\caption{Transplantable connected loop-signed graphs with $3$ edge colors.
The last $2$ columns contain the number of equivalence classes with
respect to the relation generated by renumberings of edge colors.\label{tab:ResultsThreeEdgeColours}}
\end{table}

\begin{table}
\noindent \begin{centering}
\begin{tabular}{c>{\centering}p{17mm}c>{\centering}p{8mm}>{\centering}p{8mm}>{\centering}p{8mm}>{\centering}p{8mm}>{\centering}p{8mm}>{\centering}p{8mm}}
 & \multicolumn{2}{c}{Edge-colored} & \multicolumn{2}{c}{Dirichlet} & \multicolumn{2}{c}{Neumann} & \multicolumn{2}{c}{Treelike}\tabularnewline
$V$ & graphs & trees & \multicolumn{2}{c}{pairs classes} & \multicolumn{2}{c}{pairs classes} & \multicolumn{2}{c}{pairs classes}\tabularnewline
\hline 
$7$ & $1\,407$ & $143$ & $7$ & $3$ & $7$ & $3$ & $7$ & $3$\tabularnewline
$8$ & $6\,877$ & $450$ & $64$ & $16$ & $28$ & $8$ & $0$ & $0$\tabularnewline
$9$ & $28\,665$ & $1\,326$ & $0$ & $0$ & $0$ & $0$ & $0$ & $0$\tabularnewline
$10$ & $142\,449$ & $4\,262$ & $0$ & $0$ & $0$ & $0$ & $0$ & $0$\tabularnewline
$11$ & $681\,467$ & $13\,566$ & $34$ & $9$ & $70$ & $19$ & $0$ & $0$\tabularnewline
$12$ & $3\,535\,172$ & $44\,772$ & $2\,362$ & $440$ & $42$ & $10$ & $0$ & $0$\tabularnewline
$13$ & $18\,329\,101$ & $148\,580$ & $26$ & $9$ & $26$ & $9$ & $26$ & $9$\tabularnewline
$14$ & $99\,531\,092$ & $502\,101$ & $345$ & $77$ & $798$ & $163$ & $42$ & $7$\tabularnewline
$15$ & $546\,618\,491$ & $1\,710\,855$ & $51$ & $13$ & $159$ & $33$ & $15$ & $4$\tabularnewline
\end{tabular}
\par\end{centering}

\caption{Pairs and equivalence classes as in Table~\ref{tab:ResultsThreeEdgeColours}
but with uniform loop signs (no such pair with $\NumberOfVertices<7$).
\label{tab:ResultsThreeEdgeColoursHomo}}
\end{table}

\begin{figure}
\noindent \begin{centering}
\psfragBodySmall\psfrag{c}{\raisebox{0mm}{$\Graph$}} \psfrag{C}{\raisebox{-1mm}{$\OnSecond{\Graph}$}}\psfrag{a}{\hspace{1mm}\raisebox{-1.5mm}{$\Graph_{S}$}} \psfrag{b}{\hspace{-2mm}\raisebox{-1.5mm}{$\OnSecond{\Graph}_{S}$}} \psfrag{A}{\hspace{-2mm}\raisebox{0mm}{\Large{$\Rightarrow$}}} \psfrag{B}{\hspace{-2mm}\raisebox{0mm}{\Large{$\Leftarrow$}}}\raisebox{9mm}{\includegraphics[scale=0.3]{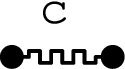}}\hspace{-10.5mm}\includegraphics[scale=0.35]{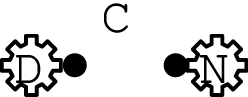}\hspace{5mm}\includegraphics[scale=0.3]{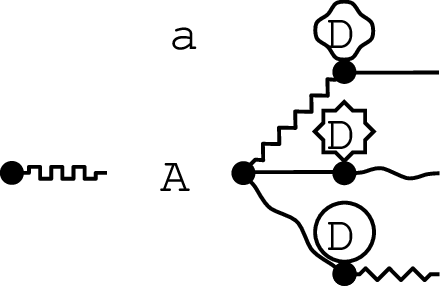}\hspace{0.3cm}\includegraphics[scale=0.3]{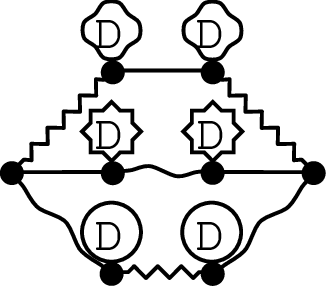}\hspace{0.3cm}\includegraphics[scale=0.3]{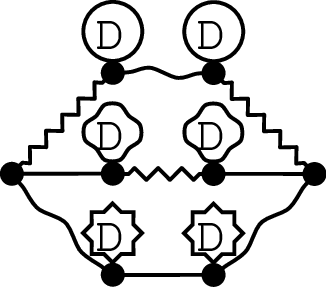}\hspace{0.3cm}\includegraphics[scale=0.3]{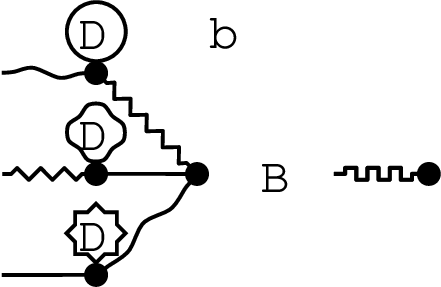}
\par\end{centering}

\caption{A self-dual pair giving rise to transplantable graphs with Dirichlet
loop signs.\label{fig:Diamond_8V}}
\end{figure}

In the following, we demonstrate how the $6$ self-dual pairs in Figure~\ref{fig:Appendix-4-vertices-per-graph}
in the appendix give rise to $6$ of the $8$ Neumann pairs with $8$
vertices per graph. The remaining $2$ pairs are shown in Figure~\ref{fig:OrientableNonorientableNeumannBC}.
We consider the transplantable pair $(\Graph,\OnSecond{\Graph})$
and the self-dual pair $(\Graph_{S},\OnSecond{\Graph}_{S})$ in Figure~\ref{fig:Diamond_8V},
where outward-pointing edges indicate Neumann loops. According to
the Substitution Theorem, $\SubstitutedGraph{\Graph}{\Graph_{S}}$
and $\SubstitutedGraph{\OnSecond{\Graph}}{\Graph_{S}}$ are transplantable
as well as $\SubstitutedGraph{\Graph}{\OnSecond{\Graph}_{S}}$ and
$\SubstitutedGraph{\OnSecond{\Graph}}{\OnSecond{\Graph}_{S}}$, where
the indicated loop assignments shall be used. Note that $\SubstitutedGraph{\OnSecond{\Graph}}{\Graph_{S}}$
and $\SubstitutedGraph{\OnSecond{\Graph}}{\OnSecond{\Graph}_{S}}$
have an identical component with Dirichlet loops only. Omitting these
components leaves us with the transplantable pair $(\Graph_{S},\OnSecond{\Graph}_{S})$
we started with. Hence, $\SubstitutedGraph{\OnSecond{\Graph}}{\Graph_{S}}$
and $\SubstitutedGraph{\OnSecond{\Graph}}{\OnSecond{\Graph}_{S}}$
are transplantable for which reason $\SubstitutedGraph{\Graph}{\Graph_{S}}$
and $\SubstitutedGraph{\Graph}{\OnSecond{\Graph}_{S}}$ are transplantable.
Since these graphs have bipartite loopless versions, we can pass to
the dual pair having Neumann loops only. In the same way, some of
the pairs with $14$ vertices per graph and uniform loop signs come
from graphs with $7$ vertices and mixed loop signs.

\newpage{}

\section{Inaudible properties\label{sec:InaudibleProps}}

\subsection{One cannot hear which parts of a drum are broken\label{sub:SelfDualAndAlmostSelfDualPairs}}

The transplantable half-disks in Figure~\ref{fig:IsospectralHalfDisks}
were used to show that two broken versions of a drum with drumheads
attached exactly where the other version's drumhead is free can sound
the same~\cite{JakobsonLevitinNadirashviliPolterovich2006}. These
domains come from a self-dual pair, which in turn arises from the
pair with $2$ edge colors and $2$ vertices per graph as indicated
in Figure~\ref{fig:Appendix-4-vertices-per-graph} in the appendix.
Figure~\ref{fig:SelfdualPairs} presents another self-dual\emph{
}pair giving rise to domains with a single Dirichlet boundary component.

\begin{figure}[b]
\begin{centering}
\subfloat[Isospectral half-disks~\cite{JakobsonLevitinNadirashviliPolterovich2006}.\label{fig:IsospectralHalfDisks}]{\noindent \begin{centering}
\begin{minipage}[t]{65mm}%
\noindent \begin{center}
\psfragBody%
\begin{tabular}{cc}
\noalign{\vskip4mm}
\includegraphics[scale=0.42]{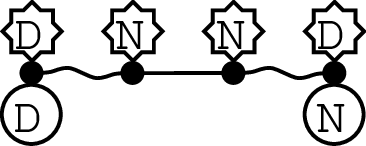} & \includegraphics[scale=0.48]{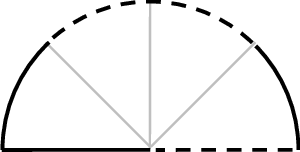}\tabularnewline
\includegraphics[scale=0.42]{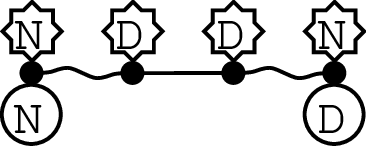} & \includegraphics[scale=0.48]{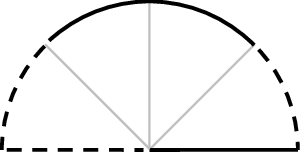}\tabularnewline
\end{tabular}
\par\end{center}%
\end{minipage}
\par\end{centering}

\noindent \centering{}}\hspace{4mm}\subfloat[Self-dual pair\label{fig:SelfdualPairs}]{\begin{centering}
\begin{minipage}[t]{42mm}%
\begin{center}
\psfragBody%
\begin{tabular}{cc}
\includegraphics[scale=0.42]{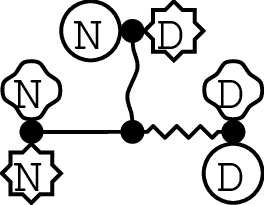} & \includegraphics[scale=0.48]{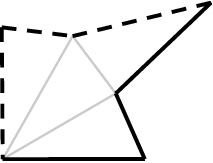}\tabularnewline
\includegraphics[scale=0.42]{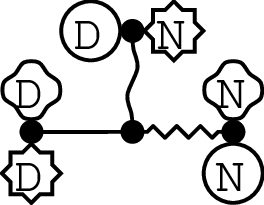} & \includegraphics[scale=0.48]{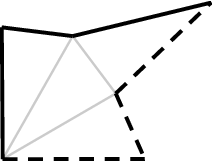}\tabularnewline
\end{tabular}
\par\end{center}%
\end{minipage}
\par\end{centering}

}
\par\end{centering}

\caption{Self-dual pairs and domains, whose spectra are invariant under swapping
all boundary conditions.}
\end{figure}

\subsection{One cannot hear the fundamental group of a broken drum}

Isospectral manifolds with different fundamental groups first appeared
in~\cite{Vign'eras1980,Vign'eras1980a}. Planar examples with mixed
boundary conditions were presented in~\cite{LevitinParnovskiPolterovich2006}.
Similar examples can be obtained from Figure~\ref{fig:Appendix-4-vertices-per-graph}
in the appendix. In contrast, transplantable connected graphs with
uniform loop signs have loopless versions with isomorphic fundamental
groups, which can be seen as follows. If a connected loop-signed graph
with either Dirichlet or Neumann loops is given by the $\NumberOfVertices\times\NumberOfVertices$
adjacency matrices $(\AdjacencyMatrix^{\Colour})_{\Colour=1}^{\NumberOfColours}$,
then the number~$\NumberOfVertices$ of vertices and the number~$E$
of edges that belong to its loopless version are determined by expressions
of the form~(\ref{eq:TraceCondition}), namely, 
\[
\NumberOfVertices=\Trace(I_{\NumberOfVertices})\qquad\text{and}\qquad E=\frac{1}{2}\sum_{\Colour=1}^{\NumberOfColours}(\Trace(I_{\NumberOfVertices})\pm\Trace(\AdjacencyMatrix^{\Colour})).
\]

\subsection{One cannot hear whether a broken drum is orientable\label{sub:Orientability}}

Doyle and Rossetti \cite{DoyleRossetti2008} showed that orientability
of closed hyperbolic surfaces can be heard. In contrast, B\'erard
and Webb~\cite{B'erardWebb1995} constructed a pair of Neumann but
not Dirichlet isospectral manifolds one of which is orientable while
the other is not. Their example corresponds to the pairs in Figure~\ref{fig:OrientableNonorientableNeumannBC},
which can be obtained from each other by braiding. Table~\ref{tab:OriNonoriNeuPair}
provides a similar pair with $15$ instead of $8$ vertices per graph,
and Figure~\ref{fig:OrientableNonorientableMixedBC} shows the first
example of this kind with mixed loop signs. These manifolds have different
numbers of Dirichlet boundary components, and one can alter the building
block continuously so that the first one becomes planar. In contrast,
the following observation by Doyle~\cite{Doyle} raises the question
whether orientability is encoded in the Dirichlet spectrum of a connected
manifold.

\begin{figure}
\noindent \centering{}\psfragBody\includegraphics[scale=0.42]{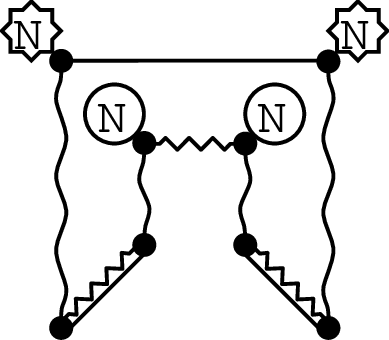}\hspace{1mm}\includegraphics[scale=0.42]{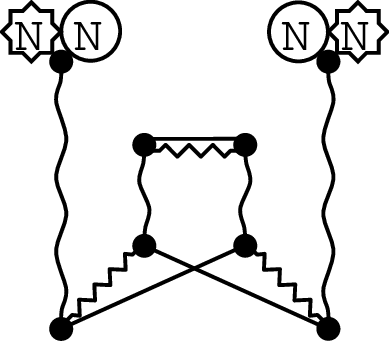}\hspace{5mm}\includegraphics[scale=0.42]{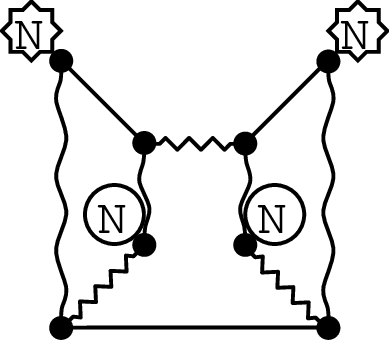}\hspace{1mm}\includegraphics[scale=0.42]{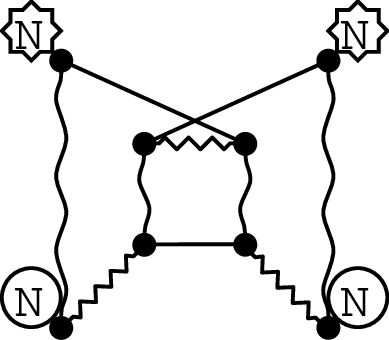}\caption{Transplantable pairs. Only the respective first graph has a bipartite
loopless version.\label{fig:OrientableNonorientableNeumannBC}}
\end{figure}

\begin{table}
\noindent \begin{centering}
\begin{tabular}{l|l}
Bipartite graph & Non-bipartite graph\tabularnewline
\hline 
{\small{(1,8)(3,10)(5,12)(7,14)}} & {\small{(1,8)(3,10)(5,12)(7,14)}}\tabularnewline
{\small{(1,2)(4,7)(8,14)(9,12)(10,15)(11,13)}} & {\small{(1,6)(2,13)(3,11)(4,12)(5,10)(9,14)}}\tabularnewline
{\small{(1,6)(2,12)(3,10)(4,13)(5,11)(8,15)}} & {\small{(1,12)(2,9)(3,5)(4,15)(7,10)(8,14)}}\tabularnewline
\end{tabular}
\par\end{centering}

\caption{Colored links of transplantable graphs with $15$ vertices, all of
whose loops carry Neumann signs.\label{tab:OriNonoriNeuPair}}
\end{table}

\begin{figure}
\noindent \begin{centering}
\psfragBody\psfrag{1}{\raisebox{0.5mm}{$1$}} \psfrag{2}{\raisebox{0.5mm}{\hspace{-0.2mm}$2$}} \psfrag{3}{\raisebox{0.5mm}{\hspace{-0.5mm}$3$}} \psfrag{4}{\raisebox{0.5mm}{\hspace{-0.5mm}$4$}}
\psfrag{A}{\raisebox{1mm}{\hspace{0mm}$M_1$}}
\psfrag{B}{\raisebox{1mm}{\hspace{0mm}$M_2$}}
\psfrag{C}{\raisebox{1mm}{\hspace{0mm}$M_3$}}%
\begin{tabular}{>{\raggedright}p{27mm}>{\centering}p{50mm}c}
\includegraphics[scale=0.42]{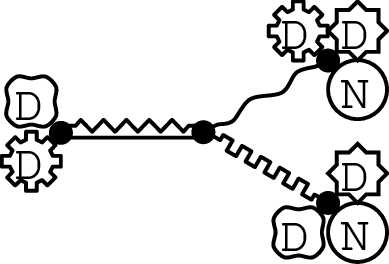} & \includegraphics[scale=0.48]{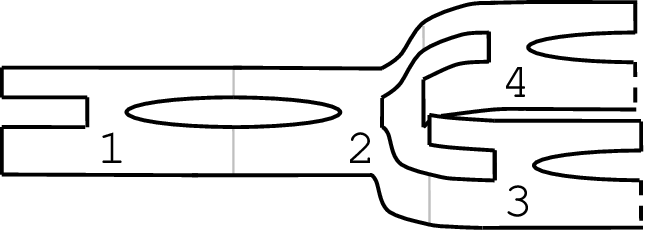} & \multirow{2}{*}{%
\begin{tabular}{l}
\noalign{\vskip-9mm}
\setlength{\arraycolsep}{1pt}$\begin{array}{cc}
 & T=T^{T}=T^{-1}\\
\frac{1}{2}\hspace{-1.5mm}\, & \left(\begin{array}{cccc}
1 & -1 & 1 & 1\\
-1 & 1 & 1 & 1\\
1 & 1 & 1 & -1\\
1 & 1 & -1 & 1
\end{array}\right)
\end{array}$\setlength{\arraycolsep}{\myArraycolsep}\tabularnewline[10mm]
\end{tabular}}\tabularnewline
\noalign{\vskip1mm}
\includegraphics[scale=0.42]{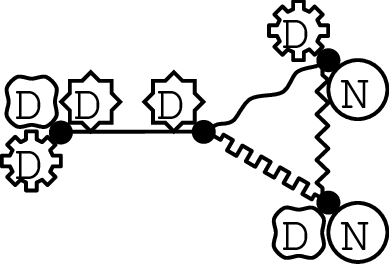} & \includegraphics[scale=0.48]{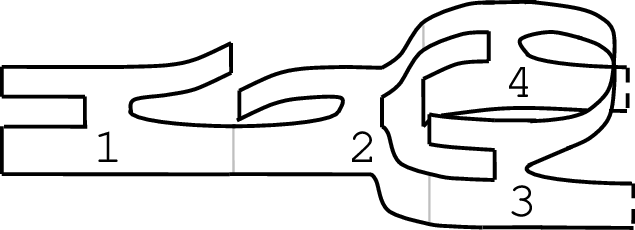} & \tabularnewline
\noalign{\vskip2mm}
\end{tabular}
\par\end{centering}

\caption{Transplantable manifolds with mixed boundary conditions, only one
of which is orientable. \label{fig:OrientableNonorientableMixedBC}}
\end{figure}

\begin{prop}
If two connected loop-signed graphs without Neumann loops are transplantable,
then their loopless versions are either both bipartite or non-bipartite.\label{prop:Orientability_of_Graphs_without_Neumann_Loops}\end{prop}
\begin{proof}
Let $\Graph$ and $\OnSecond{\Graph}$ be transplantable loop-signed
graphs with $\NumberOfVertices\times\NumberOfVertices$ adjacency
matrices $(\AdjacencyMatrix^{\Colour})_{\Colour=1}^{\NumberOfColours}$
and $(\OnSecond{\AdjacencyMatrix}^{\Colour})_{\Colour=1}^{\NumberOfColours}$
having non-positive diagonal entries. If $\Graph$ is loopless, that
is, $\Trace(\AdjacencyMatrix^{\Colour})=0$ for $\Colour=1,2,\ldots,\NumberOfColours$,
then the Trace Theorem shows that $\OnSecond{\Graph}$ cannot have
loops either. If, in addition, $\Graph$ has an odd cycle with associated
sequence of edge colors $\Colour_{1}\Colour_{2}\ldots\Colour_{2n+1}$,
then $0<\Trace(\AdjacencyMatrix^{\Colour_{1}}\AdjacencyMatrix^{\Colour_{2}}\cdots\AdjacencyMatrix^{\Colour_{2n+1}})=\Trace(\OnSecond{\AdjacencyMatrix}^{\Colour_{1}}\OnSecond{\AdjacencyMatrix}^{\Colour_{2}}\cdots\OnSecond{\AdjacencyMatrix}^{\Colour_{2n+1}})$,
and $\OnSecond{\Graph}$ is non-bipartite as well. Hence, we may assume
that both $\Graph$ and $\OnSecond{\Graph}$ have loops. In the following,
we consider the Markov chain with $\NumberOfVertices\times\NumberOfVertices$
transition matrix $P$ with entries
\[
P_{ij}=\frac{1}{\NumberOfColours}\sum_{\Colour=1}^{\NumberOfColours}|\AdjacencyMatrix_{ij}^{\Colour}|.
\]
It represents a random walk on the vertices of $\Graph$, where at
each step one of the $\NumberOfColours$ incident links or loops is
chosen with equal probability. Since $\Graph$ is connected and has
loops, $P$ is irreducible, aperiodic and has the invariant distribution
$\NumberOfVertices^{-1}(1,1,\ldots,1)$ resulting in convergence to
equilibrium~\cite[Theorem 1.8.3]{Norris1998} 
\[
\lim_{k\to\infty}(P^{k})_{ij}=\NumberOfVertices^{-1}.
\]
If the loopless version of $\Graph$ is bipartite, then every product
of adjacency matrices of $\Graph$ with an even number of factors
is a signed permutation matrix with non-negative entries on the diagonal.
Hence, 
\begin{equation}
1=\Trace\Bigl(\lim_{k\to\infty}P^{2k}\Bigr)=\lim_{k\to\infty}\Trace(P^{2k})=\lim_{k\to\infty}\NumberOfColours^{-2k}\Trace\biggl(\sum_{\Colour=1}^{\NumberOfColours}\AdjacencyMatrix^{\Colour}\biggr)^{2k}.\label{eq:LimitOfTracePowers}
\end{equation}
If, however, the loopless version of $\Graph$ has an odd cycle, then
we can find $L\in\mathbb{N}$ such that for any pair $(i,j)$ of vertices
of $\Graph$, there exist two paths of length $L$ from $i$ to $j$,
such that one uses an odd number of loops whereas the other uses an
even number of loops. For $2k>L$, we partition the set of $2k$-cycles
into equivalence classes such that cycles are equivalent if their
initial subpaths of length $L$ start and end at the same vertices.
Note that there are at most $\NumberOfColours^{L}$ paths of length
$L$ between any two vertices of $\Graph$ which shows that in each
class, the fraction of cycles starting with one of the $2\NumberOfVertices^{2}$
above-mentioned paths is at least $2\NumberOfColours^{-L}$. Since
half of these $2k$-cycles use an odd number of loops, the fraction
of such $2k$-cycles in $\Graph$ is at least $\NumberOfColours^{-L}$,
independent of~$k$. Moreover, the first equality in~(\ref{eq:LimitOfTracePowers})
remains true, that is, the number of $2k$-cycles in $\Graph$ times
$C^{-2k}$ converges to $1$. Thus, the right-hand side of~(\ref{eq:LimitOfTracePowers})
is bounded by $1-\NumberOfColours^{-L}$, which completes the proof
by virtue of the Trace Theorem.
\end{proof}

\subsection{One cannot hear whether a drum is connected\label{sub:NbrOfComp}}

It is well-known that the number of components of a manifold with
pure Neumann boundary conditions equals the multiplicity of $0$ as
an eigenvalue. In contrast, Figure~\ref{fig:Dirichlet_Drums_With_Different_Connectivity}
presents the first known pair of Dirichlet isospectral manifolds with
different numbers of components. This is also the first known pair
of Dirichlet isospectral manifolds that are not Neumann isospectral.
The graphs in Figure~\ref{fig:OrientableNonorientableMixedBC} give
rise to the same manifolds if one uses an appropriate building block.
In view of the pair in Figure~\ref{fig:Dirichlet_Drums_With_Different_Connectivity},
note that if two transplantable graphs without Neumann loops have
different numbers of components, then their loopless versions cannot
both be bipartite, for if they were, their dual pairs would give rise
to Neumann isospectral manifolds with different numbers of components.

\begin{figure}
\noindent \begin{centering}
\psfragBody\psfrag{1}{\raisebox{0.5mm}{$1$}} \psfrag{2}{\raisebox{0.5mm}{\hspace{-0.2mm}$2$}} \psfrag{3}{\raisebox{0.5mm}{\hspace{-0.5mm}$3$}} \psfrag{4}{\raisebox{0.5mm}{\hspace{-0.5mm}$4$}}
\psfrag{A}{\raisebox{1mm}{\hspace{0mm}$M_1$}}
\psfrag{B}{\raisebox{1mm}{\hspace{0mm}$M_2$}}
\psfrag{C}{\raisebox{1mm}{\hspace{0mm}$M_3$}}%
\begin{tabular}{>{\raggedright}p{27mm}>{\centering}p{50mm}c}
\includegraphics[scale=0.42]{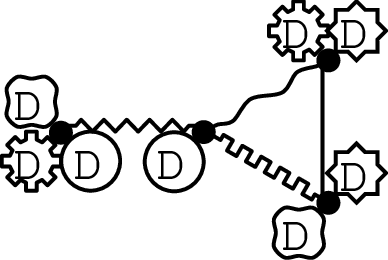} & \includegraphics[scale=0.48]{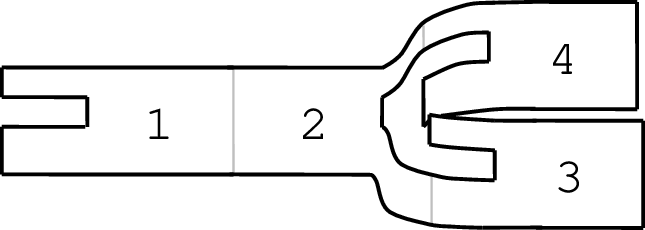} & \multirow{2}{*}{%
\begin{tabular}{l}
\noalign{\vskip-9mm}
\setlength{\arraycolsep}{1pt}$\begin{array}{cc}
 & T=T^{T}=T^{-1}\\
\frac{1}{2}\hspace{-1.5mm}\, & \left(\begin{array}{cccc}
1 & -1 & 1 & 1\\
-1 & 1 & 1 & 1\\
1 & 1 & 1 & -1\\
1 & 1 & -1 & 1
\end{array}\right)
\end{array}$\setlength{\arraycolsep}{\myArraycolsep}\tabularnewline[10mm]
\end{tabular}}\tabularnewline
\noalign{\vskip1mm}
\includegraphics[scale=0.42]{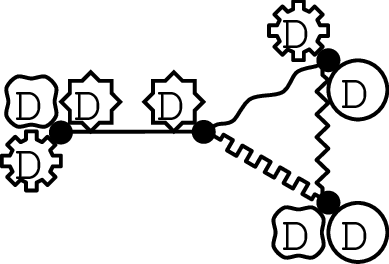} & \includegraphics[scale=0.48]{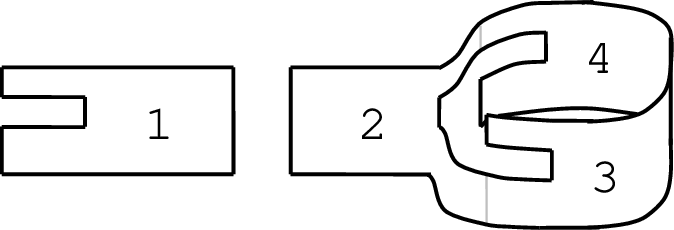} & \tabularnewline
\noalign{\vskip2mm}
\end{tabular}
\par\end{centering}

\caption{Transplantable manifolds with pure Dirichlet boundary conditions,
only one of which is connected. \label{fig:Dirichlet_Drums_With_Different_Connectivity}}
\end{figure}

\begin{figure}
\noindent \begin{centering}
\psfragBody\psfrag{1}{\raisebox{0.5mm}{$1$}} \psfrag{2}{\raisebox{0.5mm}{\hspace{-0.2mm}$2$}} \psfrag{3}{\raisebox{0.5mm}{\hspace{-0.5mm}$3$}} \psfrag{4}{\raisebox{0.5mm}{\hspace{-0.5mm}$4$}}
\psfrag{A}{\raisebox{1mm}{\hspace{0mm}$M_1$}}
\psfrag{B}{\raisebox{1mm}{\hspace{0mm}$M_2$}}
\psfrag{C}{\raisebox{1mm}{\hspace{0mm}$M_3$}}%
\begin{tabular}{>{\raggedright}p{27mm}>{\centering}p{50mm}c}
\includegraphics[scale=0.42]{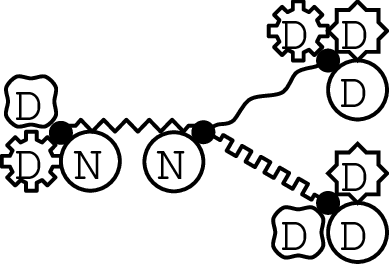} & \includegraphics[scale=0.48]{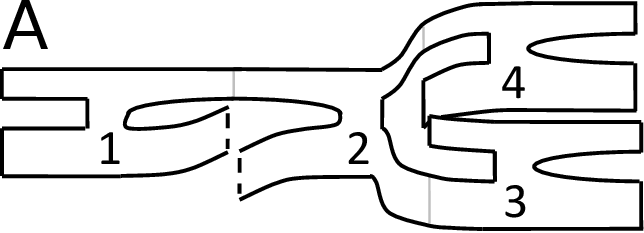} & \multirow{3}{*}{%
\begin{tabular}{l}
\noalign{\vskip-10mm}
\setlength{\arraycolsep}{2pt}$\begin{array}{cc}
 & T=T^{T}=T^{-1}\\
\hspace{-0.2mm}\frac{1}{2}\hspace{-2.3mm}\, & \left(\begin{array}{cccc}
1 & 1 & -1 & -1\\
1 & 1 & 1 & 1\\
-1 & 1 & 1 & -1\\
-1 & 1 & -1 & 1
\end{array}\right)
\end{array}$\setlength{\arraycolsep}{\myArraycolsep}\tabularnewline[12.5mm]
\multirow{1}{*}{\setlength{\arraycolsep}{1pt}$\begin{array}{cc}
\frac{1}{2}\hspace{-1.5mm}\, & \left(\begin{array}{cccc}
1 & -1 & 1 & 1\\
-1 & 1 & 1 & 1\\
1 & 1 & 1 & -1\\
1 & 1 & -1 & 1
\end{array}\right)\end{array}$\setlength{\arraycolsep}{\myArraycolsep}}\tabularnewline
\end{tabular}}\tabularnewline
\noalign{\vskip1mm}
\includegraphics[scale=0.42]{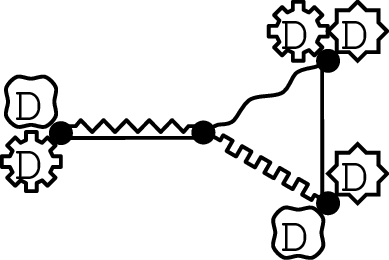} & \includegraphics[scale=0.48]{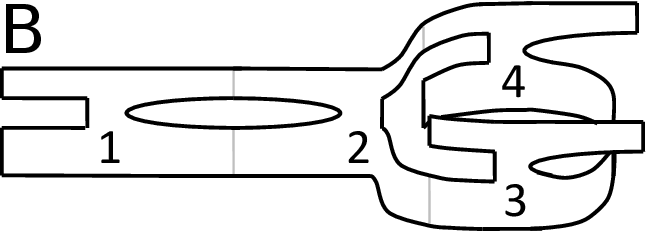} & \tabularnewline
\noalign{\vskip1mm}
\includegraphics[scale=0.42]{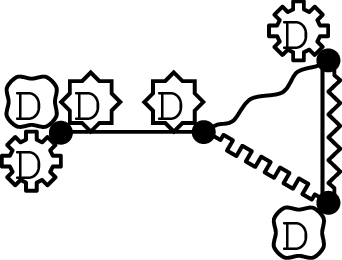} & \includegraphics[scale=0.48]{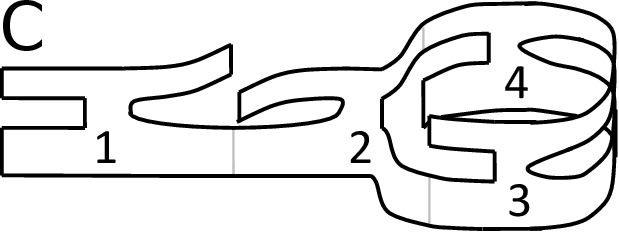} & \tabularnewline
\end{tabular}
\par\end{centering}

\caption{Transplantable manifolds, only one of which carries mixed boundary
conditions. To put it another way, a~broken\emph{ }drum that sounds
unbroken.\label{fig:ABrokenDrumThatSoundsUnbroken}}
\end{figure}

\subsection{One cannot hear whether a drum is broken\label{sub:BrokennessAndIsotropyOrder}}

We finish with the transplantable triple shown in Figure~\ref{fig:ABrokenDrumThatSoundsUnbroken}.
This is the first example of a connected manifold with mixed boundary
conditions that is isospectral to a connected manifold with pure Dirichlet
boundary conditions. Note that the building block could be altered
continuously so that $M_{1}$ becomes planar, whereas $M_{2}$ and
$M_{3}$ contain M\"obius strips. In contrast, a manifold with pure
Neumann boundary conditions has $0$ as an eigenvalue and is therefore
never isospectral, and thus never transplantable, to a connected manifold
with mixed boundary conditions. Hence, if a connected loop-signed
graph has a Dirichlet loop, then any graph it is transplantable to
must have one, which can also be shown using the technique of Proposition~\ref{prop:Orientability_of_Graphs_without_Neumann_Loops}.
Moreover, $M_{2}$ and $M_{3}$ have the same heat content, which
follows from the existence of a unitary transplantation matrix each
of whose columns sums to $1$~\cite{Band}. They constitute the first
example of Dirichlet isospectral connected flat manifolds with this
property. In the style of~\cite{GordonWebbWolpert1992}, one can
interpret $M_{1}$ as an orbifold with Dirichlet boundary by gluing
two copies of $M_{1}$ along their Neumann boundary parts and then
taking the quotient with respect to the involution given by interchanging
of the copies. In conclusion, the triple shows that the number of
Neumann boundary components is not spectrally determined, and that
an orbifold can be Dirichlet isospectral to a manifold.

\setcounter{section}{0} \renewcommand{\thesection}{\Alph{section}} 

\newpage{}

\section{Graph gallery\label{cha:PictureGallery}}

\vspace{-2mm}

\begin{figure}[H]
\noindent \begin{centering}
\psfragAppendix%
\begin{tabular}{>{\raggedright}p{13mm}>{\raggedright}p{13mm}>{\raggedright}p{18mm}>{\raggedright}p{18mm}>{\raggedright}p{18mm}l}
1 & 2 & 3 & 4 & 5 & \multicolumn{1}{l}{6\hspace{2mm}\raisebox{-2mm}{\includegraphics[scale=0.3]{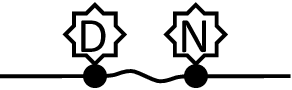}}}\tabularnewline[2mm]
\noalign{\vskip-1mm}
\includegraphics[scale=0.3]{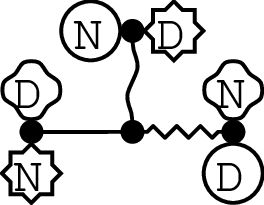} & \includegraphics[scale=0.3]{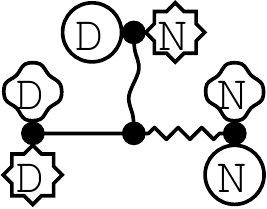} & \includegraphics[scale=0.3]{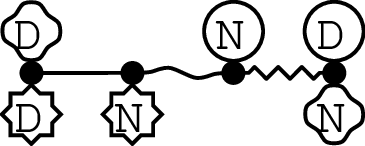} & \includegraphics[scale=0.3]{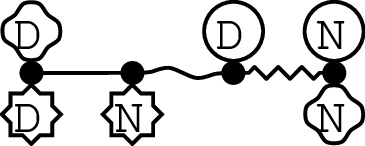} & \includegraphics[scale=0.3]{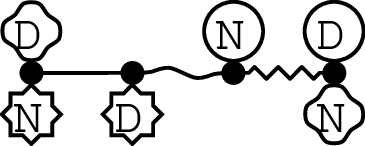} & \multicolumn{1}{l}{\includegraphics[scale=0.3]{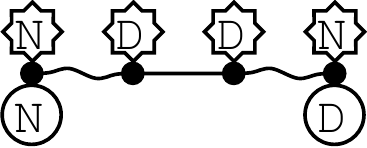}}\tabularnewline[2mm]
\noalign{\vskip-1mm}
\includegraphics[scale=0.3]{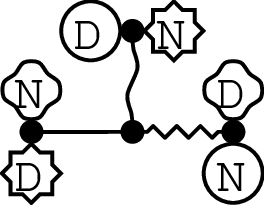} & \includegraphics[scale=0.3]{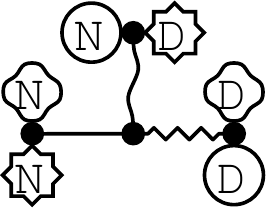} & \includegraphics[scale=0.3]{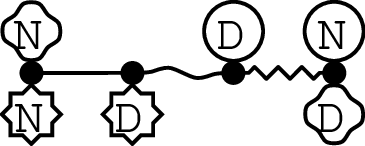} & \includegraphics[scale=0.3]{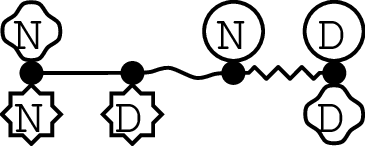} & \includegraphics[scale=0.3]{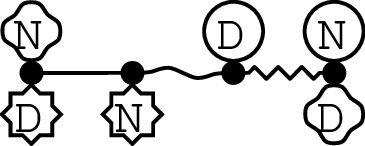} & \multicolumn{1}{>{\raggedright}p{18mm}}{\includegraphics[scale=0.3]{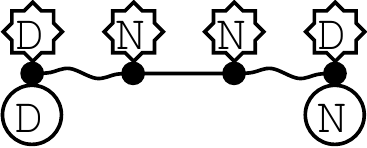}}\tabularnewline[2mm]
\noalign{\vskip-1mm}
\end{tabular}
\par\end{centering}

\noindent \begin{centering}
Self-dual treelike pairs
\par\end{centering}

\vspace{5mm}

\noindent \begin{centering}
\psfragAppendix%
\begin{tabular}{>{\raggedright}p{1mm}l>{\raggedright}p{1mm}>{\raggedright}p{4mm}r>{\raggedright}p{1mm}>{\raggedright}p{4mm}>{\raggedleft}p{15mm}>{\raggedright}p{1mm}>{\raggedright}p{7mm}l}
\raisebox{5mm}{7} & \includegraphics[scale=0.3]{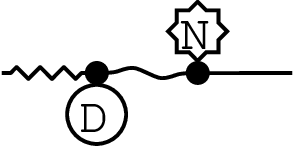} &  & \raisebox{5mm}{8} & \includegraphics[scale=0.3]{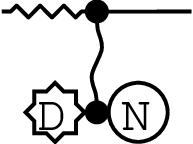} &  & \raisebox{5mm}{9/10} & \includegraphics[scale=0.3]{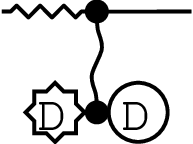} &  & \raisebox{5mm}{11/12} & \includegraphics[scale=0.3]{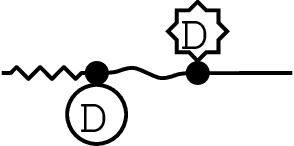}\tabularnewline
\multicolumn{2}{r}{\includegraphics[scale=0.3]{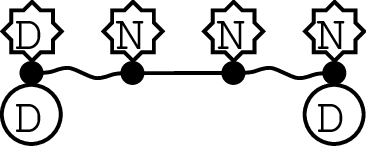}} &  & \multicolumn{2}{r}{\includegraphics[scale=0.3]{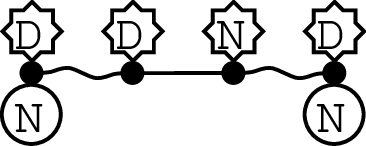}} &  & \multicolumn{2}{r}{\includegraphics[scale=0.3]{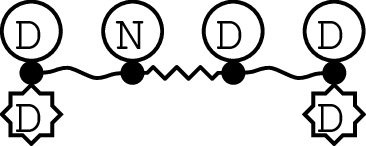}} &  & \multicolumn{2}{r}{\includegraphics[scale=0.3]{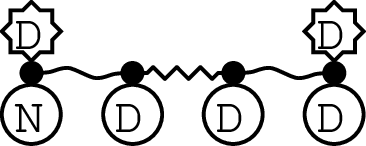}}\tabularnewline
\noalign{\vskip1mm}
\multicolumn{2}{r}{\includegraphics[scale=0.3]{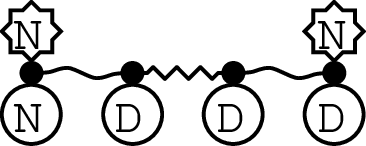}} &  & \multicolumn{2}{r}{\includegraphics[scale=0.3]{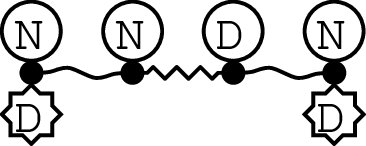}} &  & \multicolumn{2}{r}{\includegraphics[scale=0.3]{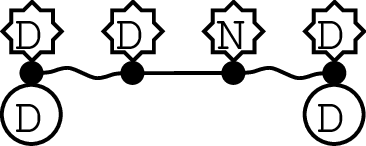}} &  & \multicolumn{2}{r}{\includegraphics[scale=0.3]{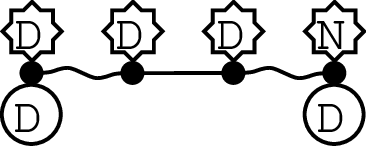}}\tabularnewline
\noalign{\vskip2mm}
\end{tabular}\\
\begin{tabular}{llllllll}
\noalign{\vskip1mm}
\raisebox{5mm}{13/14} & \includegraphics[scale=0.3]{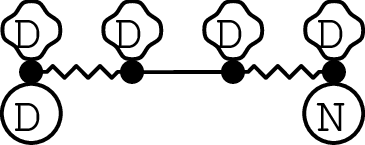} &  & \raisebox{5mm}{15/16} & \includegraphics[scale=0.3]{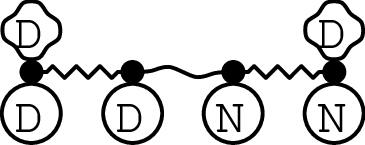} &  & \raisebox{5mm}{17/18} & \includegraphics[scale=0.3]{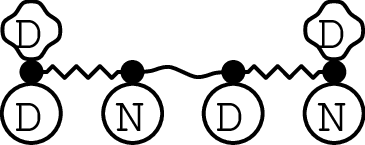}\tabularnewline
\noalign{\vskip1mm}
 & \includegraphics[scale=0.3]{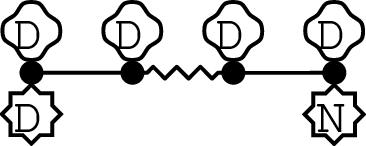} &  &  & \includegraphics[scale=0.3]{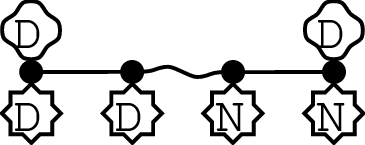} &  &  & \includegraphics[scale=0.3]{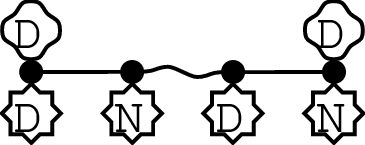}\tabularnewline
\noalign{\vskip2mm}
\end{tabular}
\par\end{centering}

\noindent \begin{centering}
Non-self-dual treelike pairs
\par\end{centering}

\noindent \begin{centering}
\begin{tabular}{c>{\centering}p{5mm}c}
\psfragAppendix\raisebox{1cm}{19}\hspace*{3mm}\includegraphics[scale=0.3]{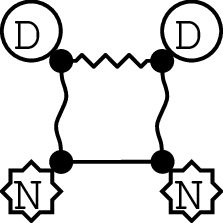}\hspace*{5mm}\includegraphics[scale=0.3]{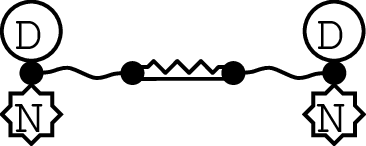} &  & \psfragAppendix\raisebox{7mm}{20}\includegraphics[scale=0.3]{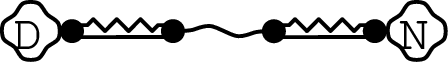}\hspace*{5mm}\includegraphics[scale=0.3]{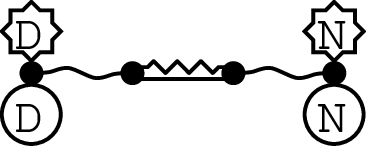}\tabularnewline[3mm]
\end{tabular}
\par\end{centering}

\noindent \begin{centering}
\psfragAppendix%
\begin{tabular}{rrrrr}
\noalign{\vskip-1mm}
\raisebox{5mm}{21}\hspace*{1mm}\includegraphics[scale=0.3]{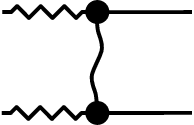} & \raisebox{5mm}{22}\hspace*{5mm}\includegraphics[scale=0.3]{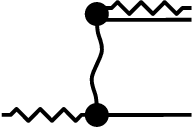} & \raisebox{5mm}{23/24}\hspace*{1mm}\includegraphics[scale=0.3]{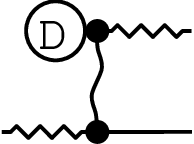} & \raisebox{5mm}{25/26}\hspace*{1mm}\includegraphics[scale=0.3]{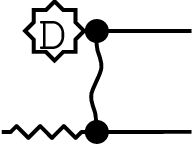} & \raisebox{5mm}{27/28}\includegraphics[scale=0.3]{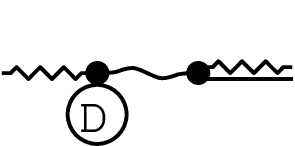}\tabularnewline
\includegraphics[scale=0.3]{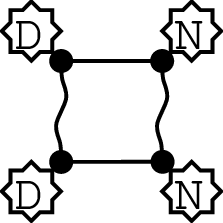} & \includegraphics[scale=0.3]{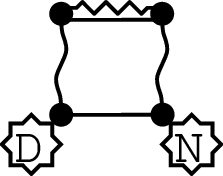} & \includegraphics[scale=0.3]{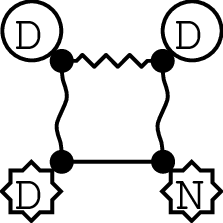} & \includegraphics[scale=0.3]{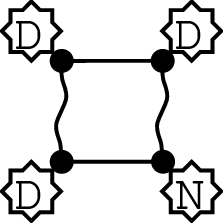} & \includegraphics[scale=0.3]{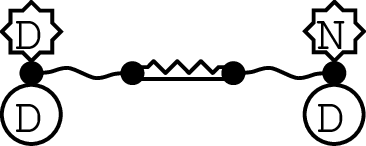}\tabularnewline
\includegraphics[scale=0.3]{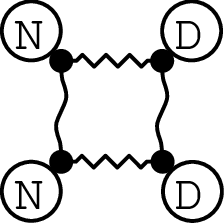} & \includegraphics[scale=0.3]{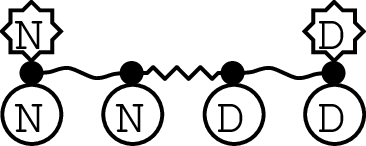} & \includegraphics[scale=0.3]{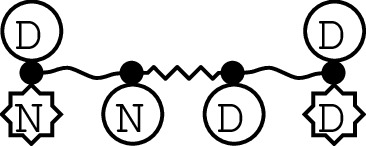} & \includegraphics[scale=0.3]{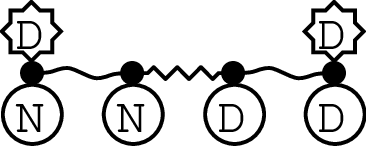} & \includegraphics[scale=0.3]{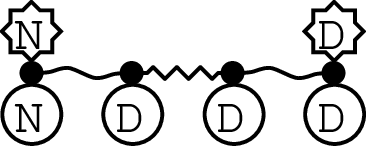}\tabularnewline[2mm]
\end{tabular}
\par\end{centering}

\noindent \begin{centering}
Non-treelike pairs
\par\end{centering}

\caption{Equivalence classes of transplantable pairs with $3$ edge colors
and $4$ vertices per graph, where second numbers refer to dual pairs.
The classes $13$, $14$ and $20$ arise from the class with $2$
edge colors and $4$ vertices per graph by adding or copying of an
edge color, respectively. The classes $6$ to $12$ and $21$ to $28$
arise from the class with $2$ edge colors and $2$ vertices per graph
by substitution which is indicated by suitable substituents.\label{fig:Appendix-4-vertices-per-graph}}
\end{figure}
\vfill{}
~

\begin{figure}[H]
\noindent \begin{centering}
\newcommand{\vraise}{9mm}\psfragAppendix%
\begin{tabular}{>{\centering}p{-1mm}c>{\centering}p{1mm}>{\centering}p{-1mm}cc>{\centering}p{-1mm}c}
\raisebox{\vraise}{1/2} & \includegraphics[scale=0.3]{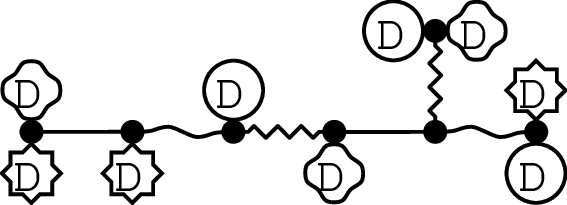} &  & \raisebox{\vraise}{3/4} & \includegraphics[scale=0.3]{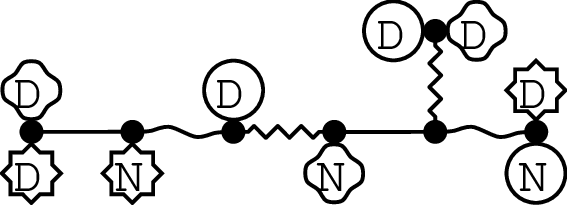} &  & \raisebox{\vraise}{5/6} & \includegraphics[scale=0.3]{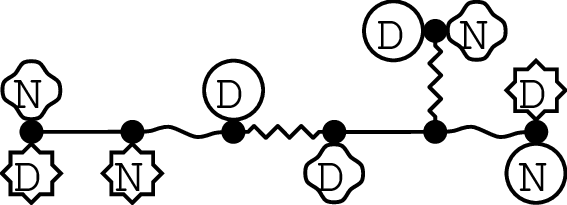}\tabularnewline
 & \includegraphics[scale=0.3]{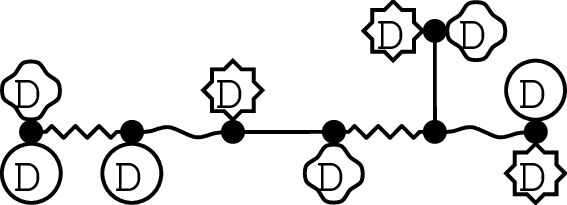} &  &  & \includegraphics[scale=0.3]{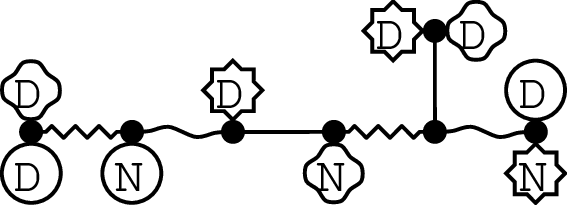} &  &  & \includegraphics[scale=0.3]{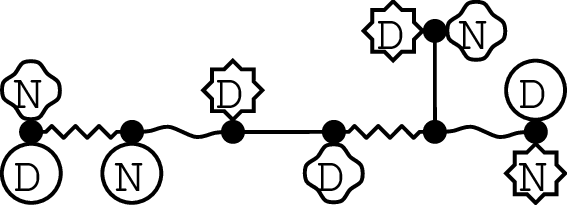}\tabularnewline[1mm]
\raisebox{\vraise}{7/8} & \includegraphics[scale=0.3]{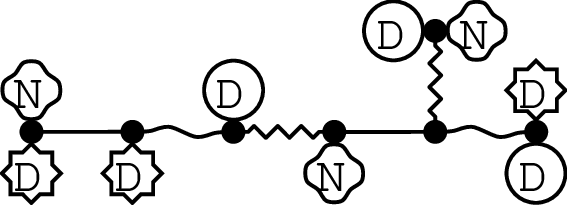} &  & \raisebox{\vraise}{9/10} & \includegraphics[scale=0.3]{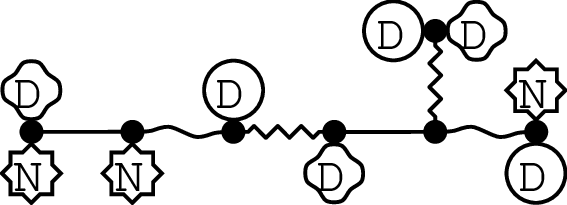} &  & \raisebox{\vraise}{11/12} & \includegraphics[scale=0.3]{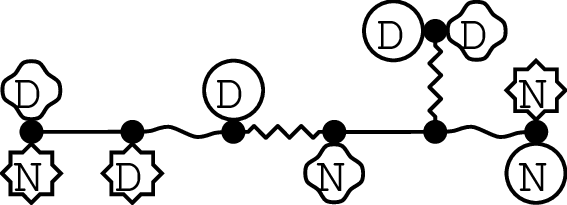}\tabularnewline
 & \includegraphics[scale=0.3]{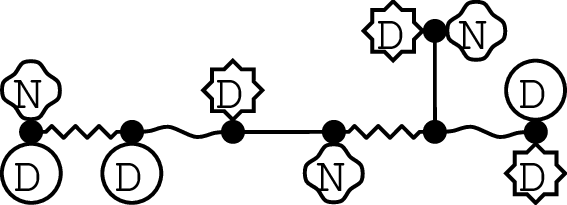} &  &  & \includegraphics[scale=0.3]{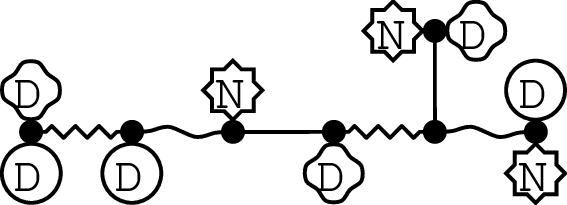} &  &  & \includegraphics[scale=0.3]{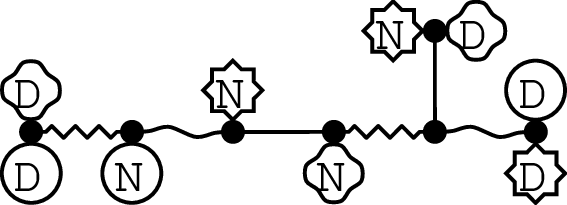}\tabularnewline[3mm]
\raisebox{\vraise}{13/14} & \includegraphics[scale=0.3]{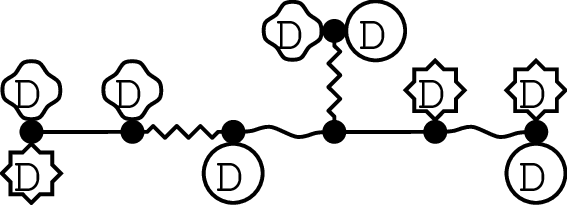} &  & \raisebox{\vraise}{15/16} & \includegraphics[scale=0.3]{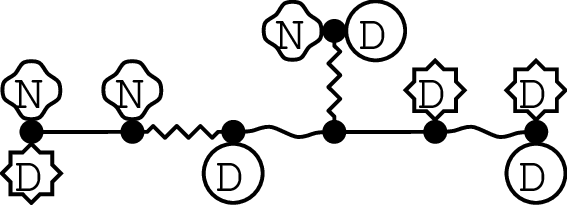} &  & \raisebox{\vraise}{17/18} & \includegraphics[scale=0.3]{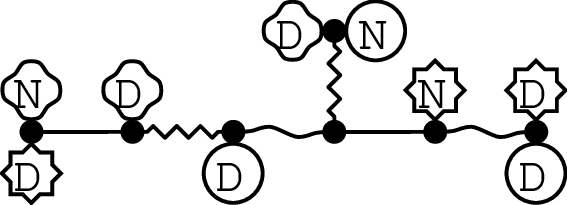}\tabularnewline
 & \includegraphics[scale=0.3]{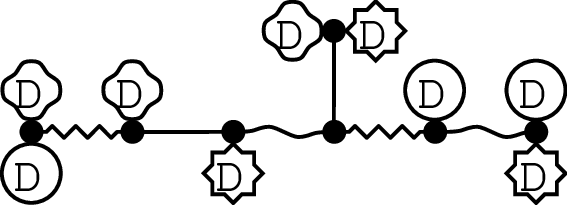} &  &  & \includegraphics[scale=0.3]{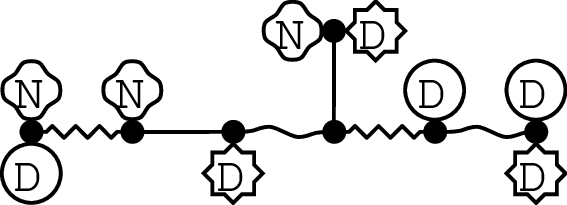} &  &  & \includegraphics[scale=0.3]{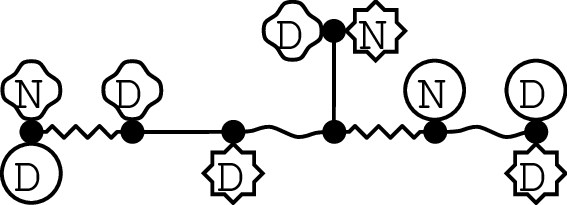}\tabularnewline[1mm]
\noalign{\vskip-3mm}
 &  &  &  &  &  &  & \tabularnewline
\raisebox{\vraise}{19/20} & \includegraphics[scale=0.3]{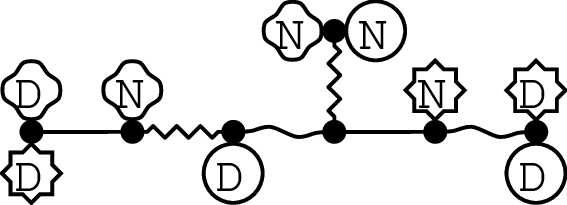} &  & \raisebox{\vraise}{21/22} & \includegraphics[scale=0.3]{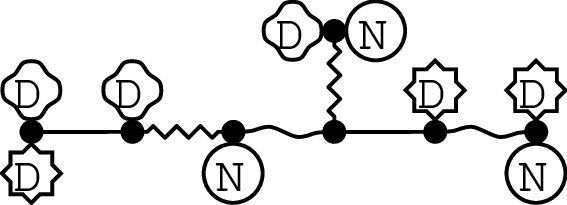} &  & \raisebox{\vraise}{23/24} & \includegraphics[scale=0.3]{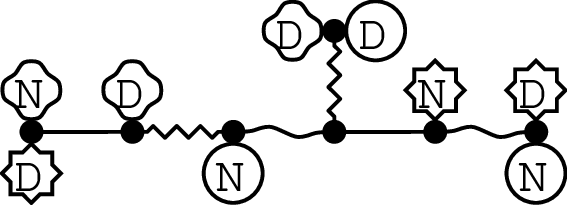}\tabularnewline
 & \includegraphics[scale=0.3]{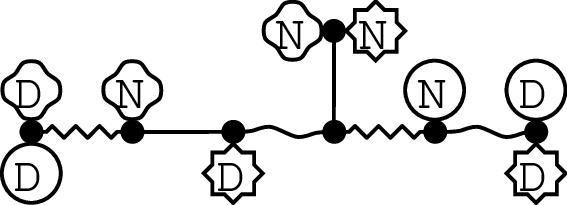} &  &  & \includegraphics[scale=0.3]{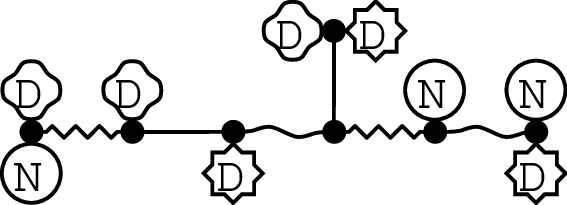} &  &  & \includegraphics[scale=0.3]{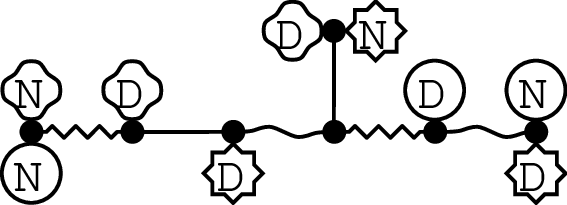}\tabularnewline[3mm]
\end{tabular}\\
\begin{tabular}{>{\raggedright}p{-1mm}>{\raggedright}p{20mm}>{\raggedright}p{-1mm}>{\raggedright}p{20mm}>{\raggedright}p{-1mm}>{\raggedright}p{20mm}>{\raggedright}p{-1mm}>{\raggedright}p{20mm}}
25/26 & \multirow{1}{20mm}{\includegraphics[scale=0.3]{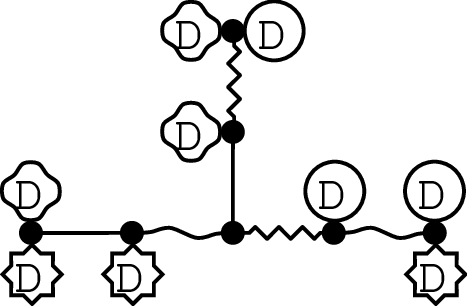}} & 27/28 & \multirow{1}{20mm}{\includegraphics[scale=0.3]{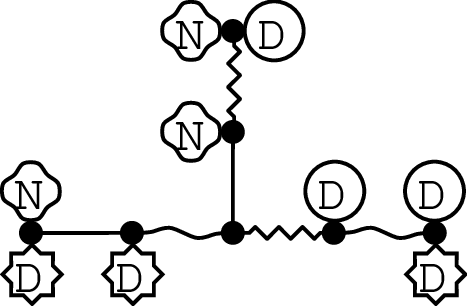}} & 29/30 & \multirow{1}{20mm}{\includegraphics[scale=0.3]{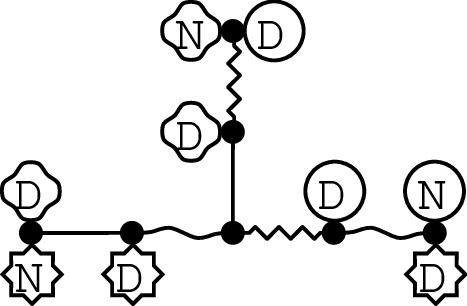}} & 31/32 & \multirow{1}{20mm}{\includegraphics[scale=0.3]{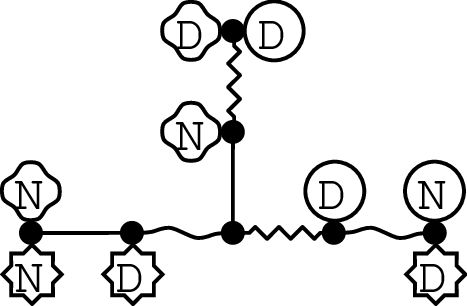}}\tabularnewline[13mm]
 & \includegraphics[scale=0.3]{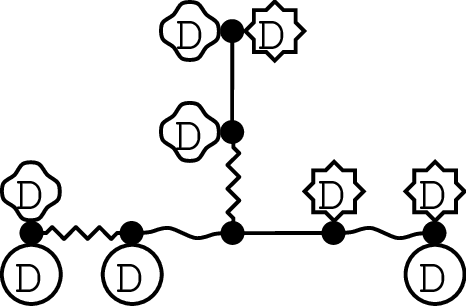} &  & \includegraphics[scale=0.3]{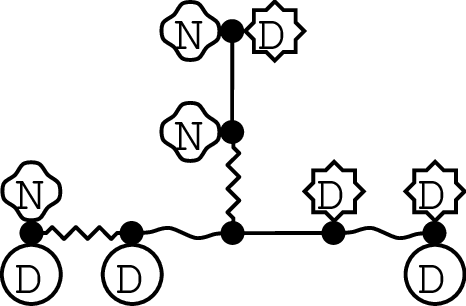} &  & \includegraphics[scale=0.3]{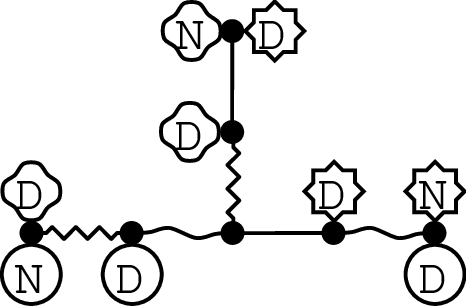} &  & \includegraphics[scale=0.3]{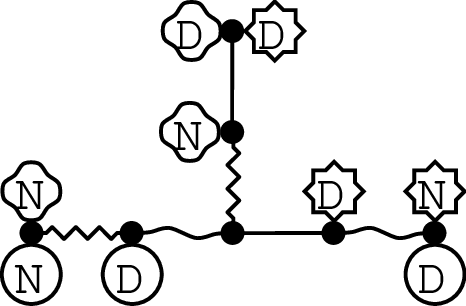}\tabularnewline
\end{tabular}
\par\end{centering}

\caption{Equivalence classes of transplantable pairs with $3$ edge colors
and $7$ vertices per graph, where second numbers refer to dual pairs.
Using the methods of Section~\ref{sec:GeneratingTools}, each class
can be obtained from one of the classes~$1$ and $3$, which in turn
arise from $S_{4}$-subgroups of $PSL(3,2)$ and their trivial and
sign representations, respectively~\cite{BuserConwayDoyleSemmler1994,ParzanchevskiBand2010}.
The corresponding spaces of matrices satisfying~(\ref{eq:TransplantationCondition})
are two- and one-dimensional, respectively. The graphs of each class
differ only by a swap of the edge colors \emph{straight }and \emph{zig-zag}
and swaps of their loop signs. The classes $3$ to $12$ represent
$10$ versions of broken Gordon-Webb-Wolpert drums~\cite{GordonWebbWolpert1992}.
\label{fig:Appendix-7-vertices-per-graph}}
\end{figure}

\newpage{}

\bibliographystyle{amsalpha}
\bibliography{Inaudible_Properties}

\providecommand{\bysame}{\leavevmode\hbox to3em{\hrulefill}\thinspace}
\providecommand{\MR}{\relax\ifhmode\unskip\space\fi MR }
% \MRhref is called by the amsart/book/proc definition of \MR.
\providecommand{\MRhref}[2]{%
  \href{http://www.ams.org/mathscinet-getitem?mr=#1}{#2}
}
\providecommand{\href}[2]{#2}
\begin{thebibliography}{GWW92}

\bibitem[Aro57]{Aronszajn1957}
N.~Aronszajn, \emph{A unique continuation theorem for solutions of elliptic
  partial differential equations or inequalities of second order}, J. Math.
  Pures Appl. (9) \textbf{36} (1957), 235--249.

\bibitem[Ban]{Band}
Ram Band, \emph{Private communication}.

\bibitem[BCDS94]{BuserConwayDoyleSemmler1994}
Peter Buser, John Conway, Peter Doyle, and Klaus-Dieter Semmler, \emph{Some
  planar isospectral domains}, Internat. Math. Res. Notices (1994), no.~9,
  391ff., approx.\ 9 pp.\ (electronic).

\bibitem[BD11]{BartelDokchitser2011}
Alex Bartel and Tim Dokchitser, \emph{Brauer relations in finite groups},
  arXiv:1103.2047v5.

\bibitem[BdS02]{BosmaSmit2002}
Wieb Bosma and Bart de~Smit, \emph{On arithmetically equivalent number fields
  of small degree}, Algorithmic number theory ({S}ydney, 2002), Lecture Notes
  in Comput. Sci., vol. 2369, Springer, Berlin, 2002, pp.~67--79.

\bibitem[B{\'e}r92]{B'erard1992}
Pierre B{\'e}rard, \emph{Transplantation et isospectralit\'e. {I}}, Math. Ann.
  \textbf{292} (1992), no.~3, 547--559.

\bibitem[BG90]{BransonGilkey1990}
Thomas~P. Branson and Peter~B. Gilkey, \emph{The asymptotics of the {L}aplacian
  on a manifold with boundary}, Comm. Partial Differential Equations
  \textbf{15} (1990), no.~2, 245--272.

\bibitem[BPBS09]{BandParzanchevskiBen-Shach2009}
Ram Band, Ori Parzanchevski, and Gilad Ben-Shach, \emph{The isospectral fruits
  of representation theory: quantum graphs and drums}, J. Phys. A \textbf{42}
  (2009), no.~17, 175202, 42.

\bibitem[Bro99]{Brooks1999}
Robert Brooks, \emph{The {S}unada method}, Tel {A}viv {T}opology {C}onference:
  {R}othenberg {F}estschrift (1998), Contemp. Math., vol. 231, Amer. Math.
  Soc., Providence, RI, 1999, pp.~25--35.

\bibitem[Bus86]{Buser1986}
Peter Buser, \emph{Isospectral {R}iemann surfaces}, Ann. Inst. Fourier
  (Grenoble) \textbf{36} (1986), no.~2, 167--192.

\bibitem[BW95]{B'erardWebb1995}
Pierre B{\'e}rard and David Webb, \emph{On ne peut pas entendre
  l'orientabilit\'e d'une surface}, C. R. Acad. Sci. Paris S\'er. I Math.
  \textbf{320} (1995), no.~5, 533--536.

\bibitem[CH95]{ConwayHsu1995}
John~H. Conway and Tim Hsu, \emph{Quilts and {$T$}-systems}, J. Algebra
  \textbf{174} (1995), no.~3, 856--908.

\bibitem[Cha95]{Chapman1995}
S.~J. Chapman, \emph{Drums that sound the same}, Amer. Math. Monthly
  \textbf{102} (1995), no.~2, 124--138.

\bibitem[Del73]{Deligne1973}
P.~Deligne, \emph{Les constantes des \'equations fonctionnelles des fonctions
  {$L$}}, Modular functions of one variable, {II} ({P}roc. {I}nternat. {S}ummer
  {S}chool, {U}niv. {A}ntwerp, {A}ntwerp, 1972), Springer, Berlin, 1973,
  pp.~501--597. Lecture Notes in Math., Vol. 349.

\bibitem[DG89]{DeTurckGordon1989}
Dennis~M. DeTurck and Carolyn~S. Gordon, \emph{Isospectral deformations. {II}.
  {T}race formulas, metrics, and potentials}, Comm. Pure Appl. Math.
  \textbf{42} (1989), no.~8, 1067--1095, With an appendix by Kyung Bai Lee.

\bibitem[DG03]{DriscollGottlieb2003}
T.~A. Driscoll and H.~P.~W. Gottlieb, \emph{Isospectral shapes with {N}eumann
  and alternating boundary conditions}, Phys. Rev. E (3) \textbf{68} (2003),
  no.~1, 016702, 6.

\bibitem[Doy]{Doyle}
Peter~G. Doyle, \emph{Private communication}.

\bibitem[DR08]{DoyleRossetti2008}
Peter~G. Doyle and Juan~Pablo Rossetti, \emph{Isospectral hyperbolic surfaces
  have matching geodesics}, New York J. Math. \textbf{14} (2008), 193--204.

\bibitem[Eva10]{Evans2010}
Lawrence~C. Evans, \emph{Partial differential equations}, second ed., Graduate
  Studies in Mathematics, vol.~19, American Mathematical Society, Providence,
  RI, 2010.

\bibitem[GT10]{GiraudThas2010}
Olivier Giraud and Koen Thas, \emph{Hearing shapes of drums: Mathematical and
  physical aspects of isospectrality}, Rev. Mod. Phys. \textbf{82} (2010),
  no.~3, 2213--2255.

\bibitem[GWW92]{GordonWebbWolpert1992}
C.~Gordon, D.~Webb, and S.~Wolpert, \emph{Isospectral plane domains and
  surfaces via {R}iemannian orbifolds}, Invent. Math. \textbf{110} (1992),
  no.~1, 1--22.

\bibitem[JLNP06]{JakobsonLevitinNadirashviliPolterovich2006}
Dmitry Jakobson, Michael Levitin, Nikolai Nadirashvili, and Iosif Polterovich,
  \emph{Spectral problems with mixed {D}irichlet-{N}eumann boundary conditions:
  isospectrality and beyond}, J. Comput. Appl. Math. \textbf{194} (2006),
  no.~1, 141--155.

\bibitem[Kac66]{Kac1966}
Mark Kac, \emph{Can one hear the shape of a drum?}, Amer. Math. Monthly
  \textbf{73} (1966), no.~4, part II, 1--23.

\bibitem[Lan70]{Langlands1970}
R.~P. Langlands, \emph{On the functional equation of the artin
  {$L$}-functions}, preprint, Yale University.

\bibitem[LPP06]{LevitinParnovskiPolterovich2006}
Michael Levitin, Leonid Parnovski, and Iosif Polterovich, \emph{Isospectral
  domains with mixed boundary conditions}, J. Phys. A \textbf{39} (2006),
  no.~9, 2073--2082.

\bibitem[Nor98]{Norris1998}
J.~R. Norris, \emph{Markov chains}, Cambridge Series in Statistical and
  Probabilistic Mathematics, vol.~2, Cambridge University Press, Cambridge,
  1998, Reprint of 1997 original.

\bibitem[OS01]{OkadaShudo2001}
Yuichiro Okada and Akira Shudo, \emph{Equivalence between isospectrality and
  isolength spectrality for a certain class of planar billiard domains}, J.
  Phys. A \textbf{34} (2001), no.~30, 5911--5922.

\bibitem[PB10]{ParzanchevskiBand2010}
Ori Parzanchevski and Ram Band, \emph{Linear representations and isospectrality
  with boundary conditions}, J. Geom. Anal. \textbf{20} (2010), no.~2,
  439--471.

\bibitem[Pes96]{Pesce1996}
Hubert Pesce, \emph{Repr\'esentations relativement \'equivalentes et
  vari\'et\'es riemanniennes isospectrales}, Comment. Math. Helv. \textbf{71}
  (1996), no.~2, 243--268.

\bibitem[Ser77]{Serre1977}
Jean-Pierre Serre, \emph{Linear representations of finite groups},
  Springer-Verlag, New York, 1977, Translated from the second French edition by
  Leonard L. Scott, Graduate Texts in Mathematics, Vol. 42.

\bibitem[SK94]{SridharKudrolli1994}
S.~Sridhar and A.~Kudrolli, \emph{Experiments on not {"}hearing the shape{"} of
  drums}, Phys. Rev. Lett. \textbf{72} (1994), 2175--2178.

\bibitem[ST11]{SchillewaertThas2011}
Jeroen Schillewaert and Koen Thas, \emph{The 2-transitive transplantable
  isospectral drums}, SIGMA \textbf{7} (2011), 080, 8 pp.

\bibitem[Sun85]{Sunada1985}
Toshikazu Sunada, \emph{Riemannian coverings and isospectral manifolds}, Ann.
  of Math. (2) \textbf{121} (1985), no.~1, 169--186.

\bibitem[Sut02]{Sutton2002}
Craig~J. Sutton, \emph{Isospectral simply-connected homogeneous spaces and the
  spectral rigidity of group actions}, Comment. Math. Helv. \textbf{77} (2002),
  no.~4, 701--717.

\bibitem[Tha06a]{Thas2006a}
Koen Thas, \emph{Kac's question, planar isospectral pairs and involutions in
  projective space}, J. Phys. A \textbf{39} (2006), no.~23, L385--L388.

\bibitem[Tha06b]{Thas2006b}
\bysame, \emph{Kac's question, planar isospectral pairs and involutions in
  projective space. {II}. {C}lassification of generalized projective
  isospectral data}, J. Phys. A \textbf{39} (2006), no.~42, 13237--13242.

\bibitem[Tha06c]{Thas2006}
\bysame, \emph{{${\rm PSL}_n(q)$} as operator group of isospectral drums}, J.
  Phys. A \textbf{39} (2006), no.~50, L673--L675.

\bibitem[Tha07]{Thas2007}
\bysame, \emph{Isospectral drums in {$\Bbb R^2$}, involution graphs and
  {E}uclidean {TI}-domains}, J. Phys. A \textbf{40} (2007), no.~26, 7233--7237.

\bibitem[Vig80a]{Vign'eras1980}
Marie-France Vign{\'e}ras, \emph{Arithm\'etique des alg\`ebres de quaternions},
  Lecture Notes in Mathematics, vol. 800, Springer, Berlin, 1980.

\bibitem[Vig80b]{Vign'eras1980a}
\bysame, \emph{Vari\'et\'es riemanniennes isospectrales et non isom\'etriques},
  Ann. of Math. (2) \textbf{112} (1980), no.~1, 21--32.

\end{thebibliography}

\noindent \texttt{\small{}}%
\begin{minipage}[t][5mm]{1\columnwidth}%
\noindent \noun{\small{\hfill{}Dartmouth College, Hanover, New Hampshire,
USA}}{\small \par}

\noindent \noun{\small{\hfill{}}}\emph{\small{E-mail address:}}{\small{
}}\texttt{\small{peter.herbrich@dartmouth.edu}}%
\end{minipage}
\end{document}